\newtheorem{thm}{Theorem}[section]
\newtheorem{theorem}{Theorem}
\newtheorem{lemma}[thm]{Lemma}
\newtheorem{proposition}[thm]{Proposition}
\newtheoremstyle{mydefinition}{}{}{\normalfont}{0pt}{\scshape}{.}{.5em}{}
\theoremstyle{mydefinition}
\newtheorem{defn}{Definition}
\newtheorem{definition}[defn]{Definition}
\newtheoremstyle{myremark}{}{}{\small\normalfont}{0pt}{\small\scshape}{.}{.5em}{}
\theoremstyle{myremark}
\newtheorem{remark}{Remark}
\numberwithin{equation}{section}
\def\beq{\begin{equation}}
\def\eeq{\end{equation}}
\def\beqn{\begin{equation*}}
\def\eeqn{\end{equation*}}
\def\beqy{\begin{eqnarray}}
\def\eeqy{\end{eqnarray}}
\def\beqyn{\begin{eqnarray*}}
\def\eeqyn{\end{eqnarray*}}
\def\dist{{\rm dist}}
\def\Cov{{\rm Cov}}
\def\bJ{\mathbf{J}}
\def\bc{\mathbf{c}}
\def\bn{\mathbf{n}}
\def\bp{\mathbf{p}}
\def\br{\mathbf{r}}
\def\bs{\mathbf{s}}
\def\bgamma{{\boldsymbol{\gamma}}}
\def\blambda{{\boldsymbol{\lambda}}}
\def\bdelta{{\boldsymbol{\delta}}}
\def\b1{{\boldsymbol{1}}}
\def\cA{\mathcal{A}}
\def\cB{\mathcal{B}}
\def\cC{\mathcal{C}}
\def\cD{\mathcal{D}}
\def\cG{\mathcal{G}}
\def\cH{\mathcal{H}}
\def\cO{\mathcal{O}}
\def\cS{\mathcal{S}}
\def\cV{\mathcal{V}}
\def\cW{\mathcal{W}}
\def\cZ{\mathcal{Z}}
\def\IE{{\mathbb E}}
\def\IN{{\mathbb N}}
\def\IR{{\mathbb R}}
\def\fB{\mathfrak{B}}
\def\fF{\mathfrak{F}}
\def\hM{\hat{M}}
\def\hcG{\hat{\mathcal{G}}}
\def\hmu{\widehat{\mu}}
\def\hnu{\widehat{\nu}}
\def\eps{\varepsilon}
\def\dist{\text{\rm dist}}
\def\eps{\varepsilon}
\def\hM{\widehat{M}}
\def\hT{\widehat{T}}
\def\hx{\widehat{x}}
\def\hD{\widehat{D}}
\def\oD{\overline{D}}
\def\brho{\overline{\rho}}
\def\bdelta{\overline{\delta}}
\def\bcA{\overline{\cA}}
\def\hcG{\widehat{\cG}}
\def\bcG{\overline{\cG}}
\def\wcG{\widetilde{\cG}}
\def\wcZ{\widetilde{\cZ}}
\def\blambda{\overline{\lambda}}
\def\wnu{\widetilde{\nu}}
\def\bbone{\mathbf{1}}
\def\fF{\mathfrak{F}}
\def\Var{\mathrm{Var}}
\def\oq{\overline{q}}
\def\hfB{\widehat{\fB}}
\def\om{\overline{m}}
\def\wm{\widehat{m}}
\def\hxi{\widehat{\xi}}
\def\homega{\widehat{\omega}}
\title{On Coupling Lemma and Stochastic Properties with Unbounded Observables for 1-d Expanding Maps}
\date{}
\author[1]{Jianyu Chen\thanks{E-mail: jychen@suda.edu.cn.}}
\author[2]{Hong-Kun Zhang\thanks{E-mail: hongkun@math.umass.edu.}}
\author[3]{Yiwei Zhang\thanks{E-mail: yiweizhang@hust.edu.cn. }}
\affil[1]{\footnotesize
School of Mathematical Sciences
\& Center for Dynamical Systems and Differential Equations\\
Soochow University\\
Suzhou, Jiangsu 215006, P.R.China}
\affil[2]{\footnotesize
Department of Mathematics and Statistics\\
University of Massachusetts Amherst\\
 Amherst, MA 01003, USA. }
 \affil[3]{\footnotesize
School of Mathematics and Statistics\\
Center for Mathematical Sciences\\
Hubei Key Laboratory of Engineering Modeling and Scientific Computing\\
Hua-Zhong University of Sciences and Technology\\
Wuhan 430074, China. }
\begin{document}

\maketitle

\begin{abstract}

%In this paper, we establish a coupling lemma for standard families
%in the setting of  piecewise expanding interval maps with countably many branches.
%This method is particularly powerful for maps whose inverse Jacobian has low regularity and
%those who does not satisfy the big image property.
%The main ingredients of our coupling method are two crucial lemmas:
%the growth lemma in terms of the characteristic $\cZ$ function
%and the covering ratio lemma over a magnet interval.
%
%We further conclude the existence of an absolutely continuous invariant measure,
%and deduce that its density belongs to the function class that we call dynamically H\"older series.
%We also obtain
%the exponential decay of correlations and the almost sure invariance principle (which is a functional
%version of the central limit theorem).
%The latter two stochastic properties hold for some unbounded observables,
%due to our Assumption {(\textbf{H1})} -  Chernov's one-step expansion at $q$-scale.
%
%Finally, we verify our assumptions for several examples that were previously studied
%in the literature, and apply our results to these examples.
%
%\medskip

In this paper, we establish a coupling lemma for standard families
in the setting of  piecewise expanding interval maps with countably many branches. Our method merely requires that the expanding map
satisfies Chernov's one-step expansion at $q$-scale and eventually covers a magnet interval.
Therefore, our approach is particularly powerful for maps whose inverse Jacobian has low regularity and
those who does not satisfy the big image property.
The main ingredients of our coupling method are two crucial lemmas:
the growth lemma in terms of the characteristic $\cZ$ function
and the covering ratio lemma over the magnet interval.
We first prove the existence of an absolutely continuous invariant measure.
What is more important, we further show that the growth lemma enables the liftablity of the Lebesgue measure to the associated Hofbauer tower, and the resulting invariant measure on the tower admits
a decomposition of Pesin-Sinai type.
Furthermore, we obtain
the exponential decay of correlations and the almost sure invariance principle (which is a functional version
of the central limit theorem).  For the first time, we are able to make a direct relation between
the mixing rates and the $\cZ$ function, see (\ref{equ:totalvariation1}).
The novelty of our results relies on establishing the regularity of invariant density,
as well as verifying the stochastic properties for a large class of unbounded observables.

Finally, we verify our assumptions for several well known examples that were previously studied
in the literature, and unify results to these examples in our framework.

\end{abstract}

\begin{center}
%\noindent{\footnotesize \textbf{Keywords:}
%Expanding maps, Coupling lemma, Standard pairs and standard families,
%Chernov's one-step expansion at $q$-scale, Characteristic $\cZ$ function.
%}

\noindent{\footnotesize \textbf{Keywords:}
Coupling lemma, Standard families,
Chernov's one-step expansion at $q$-scale,
Characteristic $\cZ$ function, Growth lemma,
Dynamically H\"{o}lder series.
}

\end{center}

\tableofcontents

\section{Introduction}

The probabilistic coupling method is a flexible technique to compare
random processes with two different initial distributions. This method has been applied in a broad variety of contexts in modern probability theory, e.g., to prove limit theorems, to derive inequalities, or to obtain approximations.
For a comprehensive introduction on the developments of this topic,
we refer the readers to the books by Lindvall \cite{MR1180522} and Thorisson \cite{MR1741181}.
In the field of dynamical systems, the coupling method is also powerful and has been developed since  the celebrated work by Young \cite{MR1750438} for Young towers, and later
the systematic works by Chernov and Dolgopyat \cite{MR1782146, MR2219528, MR2499824} for introducing standard pairs for
chaotic billiards and partially hyperbolic systems. These two schemes have been adapted afterwards in various settings, e.g., \cite{MR1675304, MR1889567, MR2054836, MR2540156, Liv09, VZ16}.

In this paper, we are aiming to adapt the works in \cite{MR2540156, VZ16}, and to establish a widely applicable version of coupling lemma for standard pairs in the setting of piecewise expanding interval maps with countably many inverse branches. Roughly speaking, our coupling lemma indicates if the dynamical system
satisfies Chernov's one-step expansion condition
and eventually covers a magnet interval, then every two proper standard families can be coupled after iterations with an exponential decay for the tail of difference (see Theorem \ref{thm: equi}).
The assumptions of our coupling lemma are purely geometrical and simple to check (see Assumption \textbf{(H1)-(H3)} in Section \ref{sec: ass and results} for the precise statements). Moreover, these assumptions allow the systems under considerations to have lower regularity of the inverse Jacobian (see Assumption \textbf{(H2)})
and to merely satisfy a non-uniform version of ``big image property'', which is beyond a large part of the current theory of Markov maps with infinitely many branches (see Assumption \textbf{(H3)}).
Based on this coupling lemma, several statistical properties are further investigated, including the existence of an absolutely continuous invariant probability measure (see Theorem \ref{thm: acip}),
the regularity of the invariant density (see Theorem~\ref{thm: density}),
the exponential decay of correlations (see Theorem \ref{thm: mixing}) and the almost sure invariance principle (see Theorem \ref{thm: ASIP}), etc.

%We emphasize that the observables in the latter two results could be unbounded.

%In contrast, there is a functional analytic framework for hyperbolic systems (see e.g. \cite{MR0335758, MR0457679, MR656227, MR728198, MR768254, MR787608, MR1040891, MR1233850, Viana, MR2007887, MR1793194, MR1759406, MR1926276, MR2529645, MR2784627, MR3009108, HV17, MR3986318}) in the literatures. Such functional analytic method links the existence of SRB measures, and its statistical properties, to the spectral analysis of the associated transfer operators acting on suitable Banach spaces.

As the readers shall see later, our coupling lemma turns out to be completely independent of functional analysis for the transfer operator, and thus is effective on bypassing the difficulties on the construction of suitable Banach spaces.
Note that functional analytic method is extremely powerful on proving the spectral properties of
the transfer operators, and in particular, on establishing the regularity of
 the invariant density (See e.g. \cite{MR1679080, MR1938476,  MR2403704}).
We stress that
despite such analytic tool is in absence, we still manage to establish
the regularity of the invariant density by a completely new approach.
Another novelty that we would like to emphasize is that the observables
include a large class of unbounded functions, for which the exponential decay of correlations and the almost sure invariance principle hold.

%Furthermore, our coupling lemma also removes the requirements on strong regularity of inverse Jacobian and the boundedness of the observables for the corresponding statistical properties. These advantages actually form the main motivation of this paper.

The main ingredients in our proof are described as follows.
In order to obtain delicate estimates in our coupling algorithm,
we adopt the notion of characteristic $\cZ$ function,
which was first introduced in \cite{MR2540156}
(see also \S 7.4 in \cite{MR2229799} for an alternative form of the characteristic $\cZ$ function),
to measure the average length of standard families under the operations of cutting,
iterates and splitting over the magnet.
In particular, for the second operation, we establish the so-called growth
lemma with exponential rate (see Lemma~\ref{lem: growth}).
This key lemma is due to our assumption \textbf{(H1)}, and it guarantees that most of
intervals in standard families will grow after sufficient many iterates.
The other key lemma is the covering ratio lemma (see Lemma~\ref{lem: covering})
over a given magnet by standard families, which results from our assumption \textbf{(H3)}.
It follows that a fixed portion of standard families is coupled at times with bounded gap.

To the best of our knowledge, it is also the first time to introduce the crucial assumption \textbf{(H1)} - Chernov's one-step expansion at $q$-scale with the constant $q$ is allowed to be less than $1$ in the setting of interval dynamics.
The advantages of this assumption are two-fold. On the one hand, the coupling technique still works out, even though the inverse Jacobian of the expanding map may not be summable (see Section~\ref{sec: rem q-scale}). On the other hand, we are able to introduce the space $\cH_{\cW, \gamma, t}$ of
dynamically H\"older series (see Definition~\ref{def: series Holder}), which contains
a large class of unbounded functions when $t>0$.

%Meanwhile, it is worth to mention the novelty in our results are mainly
%due to the crucial assumption \textbf{(H1)} - Chernov's one-step expansion at $q$-scale,
%where the constant $q$ is allowed to be less than $1$.

We stress that the Hofbauer tower construction is used in the proof of Theorem~\ref{thm: density},
which shows that the invariant density is a dynamically H\"older series.
A important by-product is that we prove the Lebesgue measure is liftable to the Hofbauer tower, due to
the second growth lemma (see Lemma~\ref{second growth}).
Inspired by the work \cite{MR721733} of Pesin and Sinai,
we show that the limiting invariant measure on the Hofbauer tower
has a decomposition of Pesin-Sinai type,
and we further prove that the invariant measure on the unit interval is
in fact carried by a standard family.

In the last section of this paper, we apply our results in the following two aspects. Firstly, by revisiting several well known piecewise linear expanding maps in the literature, e.g., \cite{MR728198, MR2959300}, we provide a unified mechanism on the existence of absolutely continuous invariant probability measure, the regularity of the invariant density and some statistical properties for these examples (see Proposition \ref{prop:application}). Indeed, compared to Theorem 1 in  \cite{MR2959300}, our Assumption \textbf{(H1)} - Chernov's one-step expansion at $q$-scale turns out to be rather sharp on guaranteeing the existence of an absolutely continuous invariant probability measure. Secondly, we investigate the function space of dynamically H\"{o}lder series for which the almost sure invariant principle (ASIP) holds.
In particular, we are able to show the ASIP for the random process generated by certain unbounded observables over
the doubling map, which gives a functional improvement of the central limit theorem in the previous studies (See e.g.  \cite{MR2099550}).

The paper is organized as follows. In Section~\ref{sec: ass and results},
we introduce the general assumptions \textbf{(H1) - (H3)},
as well as the notions of standard pairs and standard families,
and then state the results on the coupling lemma and the consequent stochastic properties.
In Section~\ref{sec:prelim}, we make some preparations on
the quantitative behavior of the standard families under the dynamics. %, and
%study their covering ratio over the magnet.
We then complete proofs of all the theorems in Section~\ref{sec: proofs1} -~\ref{Sec: Proof of ASIP}.
Finally, in Section~\ref{sec: rem app},
we provide some examples and remarks, for which our assumptions and results apply.

\section{Assumptions and Main Results}\label{sec: ass and results}

Let $M=[0,1]$ be the unit interval endowed with the standard Euclidean metric,
and let $m$ be the Lebesgue measure on $M$. Given a sub-interval $W\subset M$, we denote
its length by $|W|=m(W)$,
and the conditional measure of $m$ on $W$ by $m_W(\cdot)=m(\cdot\ | W)$.

We consider a one-dimensional map $T: M\circlearrowleft$ with countably many inverse branches,
that is, there is a countable partition $\xi_1$ of $M$ into sub-intervals,
on each interior of which $T$ is strictly monotonic and $C^1$-smooth.
Note that
we do not require $W\in \xi_1$ to be a maximal inverse branch.

\subsection{Assumptions.}\label{sec: assumptions}

In the following we list and briefly explain the assumptions. \\

Set $\xi_n=\xi_1\vee T^{-1} \xi_1\vee \dots \vee T^{-(n-1)} \xi_1$ for any $n\ge 1$.
Given an interval $W\subset M$, we let
$\{W_\alpha\}_{\alpha\in W/\xi_n}$ be the collection of sub-intervals of $W$ after being cut by $\cS_n$.
In other words, $\{W_\alpha\}_{\alpha\in W/\xi_n}$ is the relative partition of $W$ given by $\xi_n$.
For each $\alpha\in W/\xi_n$, we call the interval $T^n W_\alpha$
a \emph{component} of $T^nW$.
We further denote the collection of components of $T^nW$
by $\{T^nW_\alpha\}_{\alpha\in W/\xi_n}$.

Although some intervals in $\xi_n$ may be relatively short,
the following expansion condition ensures that
a large portion of  intervals in $\{T^nW_\alpha\}_{\alpha\in W/\xi_n}$ are relatively long.\\

\noindent {\textbf{(H1)}} Chernov's one-step expansion.
There exists $q\in (0,1]$ such that
\beq\label{def one-step}
\liminf_{\delta\to 0} \ \sup_{W\colon |W|<\delta} \ \sum_{{\alpha\in W/\xi_1}}
 \left(\frac{|W|}{|TW_{\alpha}|}\right)^{{q}}
 \frac{|W_{\alpha}|}{|W|}<1,
\eeq
where the supremum is taken over all sub-intervals $W\subset M$.

\begin{remark}%\label{value one-step}
Assumption \textbf{(H1)} was brought up by Chernov and Zhang in \cite{MR2150341}
for chaotic billiards with polynomial mixing rates (with $q=1$),
and later in \cite{MR2540156, MR2903754, MR3123537, MR3168259} for
two-dimensional general hyperbolic systems with singularities.

To emphasize the choice of $q$, we shall call \eqref{def one-step} the (Chernov's) one-step expansion (condition) at $q$-scale.
Note that by Jensen's inequality, the one-step expansion at $q'$-scale implies
the one-step expansion at $q$-scale for any $0<q\le q'\le 1$.
In particular, the one-step expansion at $1$-scale, i.e.,
\beqn%\label{def theta(1)}
\liminf_{\delta\to 0} \ \sup_{W\colon |W|<\delta} \ \sum_{{\alpha\in W/\xi_1}}
\frac{|W_\alpha|}{|TW_{\alpha}|}<1,
\eeqn
implies the one-step expansion at $q$-scale for any $q\in (0, 1)$.
In Section~\ref{sec: rem q-scale}, we shall provide a class of piecewise linear maps with infinitely many branches,
for which the one-step expansion fails at $1$-scale but holds at $q$-scale for some $q<1$.

Another advantage of Chernov's one-step expansion at $q$-scale with $q<1$
is that the observables that we consider can be unbounded (see Definition~\ref{def: series Holder}
and Remark~\ref{rem: observable q}).

%We shall see that Assumption \textbf{(H1)} can be relaxed to a multi-step expansion
%condition in Section~\ref{sec: multi-step}.
\end{remark}

Let $\cS_n$ be the set of endpoints of intervals in $\xi_n$,
and set $\cS_\infty=\cup_{n\ge 1} \cS_n$.
It directly follows from Assumption \textbf{(H1)} that the map $T$ is uniformly expanding on
$M\backslash \cS_1$. Therefore, $\{\xi_1\}$ is a generating partition under iterations of $T^{-n}$, or equivalently,
$\xi_\infty:=\bigvee\limits_{k=0}^\infty T^{-k}\xi_1$
is the partition into individual points $\pmod m$,
which makes the \emph{separation time} given below well-defined on $M\backslash \cS_\infty$.

\begin{definition}\label{def: sep}
Given a pair of points $x$ and $y$ in $M\backslash \cS_\infty$,
the \emph{separation time} $\bs(x,y)$ is defined to be the smallest integer $n\ge 1$
such that $x$ and $y$ belong to distinct elements of $\xi_n$.
\end{definition}

To make assumptions on the regularity of Jacobian,
we first introduce the \emph{dynamically H\"older continuous functions}.

\begin{definition}%\label{def: Hol}
A function $f: M\to \IR$ is said to be \emph{dynamically H\"older continuous},
supported on an interval $W\subset M$ with parameter $\gamma\in (0, 1)$,
if $f|_{M\backslash W}\equiv 0$ and
\beqn
|f|_{W, \gamma}:=\sup\left\{\dfrac{|f(x)-f(y)|}{\gamma^{\bs(x,y)}}: \
x, y\in W\backslash \cS_\infty, \text{and}\ x\ne y \right\} <\infty.
\eeqn
We denote by $\cH_{W, \gamma}$ the space of such functions.
Note that $\cH_{W, \gamma}\subset L^\infty(m)$, and we denote
$\|f\|_{W, \gamma}:=\|f\|_{\infty} + |f|_{W, \gamma}$ for any $f\in \cH_{W, \gamma}$.
\end{definition}

Denote by $T'$ the derivative of $T$, which is well defined on $M\backslash \cS_1$.
We assume the following. \\

\noindent {\textbf{(H2)  Regularity of log Jacobian (with respect to $\xi_1$)}.
There exist $C_\bJ>0$ and $\gamma_\bJ\in (0, 1)$
such that for any interval $W\in \xi_1$,
the function $\bbone_W\cdot \log|T'|$ belongs to $\cH_{W, \gamma_\bJ}$
and $\left| \ \bbone_W\cdot \log|T'| \ \right|_{W, \gamma_\bJ}\le C_\bJ$. \\

Finally, since we do not have an invariant measure to begin with,
we impose the following topological condition in order to establish the coupling lemma.\\

\noindent {\textbf{(H3) Eventual covering}.}
There exists an interval $U$, which is called a \emph{magnet},
such that any interval $W\subset M$ will eventually covers $U$
in the following sense:
there is an integer $n_W\ge 1$ such that for any $n\ge n_W$,
at least one component of $T^nW$ contains $U$.

\begin{remark}
Our magnet interval is a topological analogy of the
magnet rectangle in two-dimensional hyperbolic systems,
see e.g. \S 7.12 in \cite{MR2229799}.

Assumption \textbf{(H3)} is easy to check when the map $T$
admits a Markov partition, of which $U$ is an element.
In general, this assumption may be verified by studying
the combinatorial structure of one-dimensional maps (see Section~\ref{sec: rem q-scale}).
\end{remark}

\subsection{Standard pairs and standard families}\label{Sec: SP}

To establish the coupling lemma for the one-dimensional maps, we introduce
the concepts of standard pairs and standard families.

Let $C_\bJ>0$ and $\gamma_\bJ\in (0,1)$ be constants given in Assumption \textbf{(H2)}.
Fix
\beq\label{regular constant}
\bgamma\in [\gamma_\bJ, 1), \ \ \text{and} \ \
C_{\br}\ge \max\{1, 2C_\bJ/(\bgamma^{-1}-1)\}.
\eeq

\begin{definition}[Pair and standard pair]\label{def: sp}
$(W, \nu)$ is called a \emph{pair} if
$W$ is an interval in $M$ and $\nu$ is an absolutely continuous probability measure supported on $W$.

A pair $(W,\nu)$ is called a \emph{standard pair} if
the density $\rho:=d\nu/dm$ is \emph{regular} on $W$ in the sense that
$\log \rho\in \cH_{W, \bgamma}$ with the semi-norm $|\log \rho|_{W, \bgamma}\le C_\br$.
\end{definition}

In the coupling process,
forward iterates of standard pairs require the definition of standard families,
which can be viewed as a convex sum of standard pairs.

\begin{definition}[Family and standard family]%\label{def: sf}
Let $\cG=\{(W_{\alpha}, \nu_\alpha), \alpha\in \cA,  \lambda_\alpha\}$
be a countable family of pairs,
endowed with non-negative weights $\lambda_\alpha$ on the index set $\cA$.

The total measure of a family $\cG$ is given by
\beqn
\nu_\cG(A)=\sum_{\alpha\in \cA}\lambda_\alpha  \nu_{\alpha}(A),
\eeqn
for any Borel set $A\subset M$. For simplicity,
we also denote
\beqn
\cG=\sum_{\alpha\in \cA}\lambda_{\alpha}(W_{\alpha},\nu_{\alpha}) \ \ \text{and} \ \
\nu_\cG=\sum_{\alpha\in \cA}\lambda_{\alpha} \nu_{\alpha}.
\eeqn
A family $\cG$ is called a \emph{standard family} if each $(W_\alpha, \nu_\alpha)$ is a standard pair and
$\sum_{\alpha\in \cA} \lambda_\alpha=1$. \\

\end{definition}

We denote $\oq$ the supremum of scales for which Assumption \textbf{(H1)} holds, i.e.,
\beq\label{def oq}
\oq:=\sup\left\{ q\in (0, 1]:\ \text{the one-step expansion \eqref{def one-step} holds at}\  q\text{-scale} \right\}.
\eeq
From now on, we fix a scale $q_0\in (0, \oq)$.
Then there exists $\delta_0>0$ such that
\beq\label{def theta_0}
\theta_0:=\sup_{W\colon |W|<\delta_0} \ \sum_{{\alpha\in W/\xi_1}}
 \left(\frac{|W|}{|TW_{\alpha}|}\right)^{{q_0}}
 \frac{|W_{\alpha}|}{|W|}<1,
\eeq
where the supremum is taken over all sub-intervals $W\subset M$.

The average length of intervals in a family
$\cG=\sum_{\alpha\in \cA} \lambda_\alpha (W_{\alpha}, \nu_\alpha)$
is measured by the following characteristic $\cZ$ function
\beq\label{cZ}
\cZ(\cG):=\sum_{\alpha\in \cA} \lambda_\alpha |W_{\alpha}|^{-q_0}.
\eeq
Note that $\cZ(\cG)\ge 1$ for any standard family $\cG$.
Let $\fF$ be the collection of all families $\cG$ with $\cZ(\cG)<\infty$.
We fix constants
\beq\label{choose Zp}
c_0:=\max\left\{1,\ \frac{2\theta_0\delta_0^{-q_0}}{1-\theta_0}\right\}, \ \
\text{and} \ \
C_\bp\ge 10 c_0 e^{7C_\br}.
\eeq

\begin{definition}
A family $\cG$ is called \emph{proper} if $\cZ(\cG)\le C_\bp$.
\end{definition}

%The total measure of a family $\cG=\sum_{\alpha\in \cA} \lambda_\alpha (W_{\alpha}, \nu_\alpha)$
%is given by
%\beqn%\label{total measure}
%\nu_\cG=\sum_{\alpha\in \cA} \lambda_\alpha  \nu_\alpha.
%\eeqn
We say that two families $\cG_1$ and $\cG_2$ are equivalent
if $\nu_{\cG_1}=\nu_{\cG_2}$, denoted by $\cG_1\equiv \cG_2$.
Further, we denote $\cG\equiv \sum_{n=1}^\infty \cG_n$
if $\nu_\cG=\sum_{n=1}^\infty \nu_{\cG_n}$.

\subsection{Statement of results}%\label{Sec: Results}

In this paper, we always assume that the map $T: M\circlearrowleft$ satisfies
Assumptions (\textbf{H1})-(\textbf{H3}) given in Section~\ref{sec: assumptions}.

\subsubsection{Coupling lemma}

With the preparations in Section~\ref{Sec: SP},
we are now ready to state our first main result - the coupling lemma over magnets.

\begin{theorem}\label{thm: coupling}
Given a magnet $U$, there exist $N_\bc\ge 1$ and $\Theta_\bc\in (0, 1)$ such that
the total measure of any proper standard family $\cG$ can be decomposed as
\beqn
\nu_\cG = \sum_{n=1}^\infty \nu_{n},
\eeqn
where each $\nu_n$ is a non-negative finite measure on $M$.
Moreover,
\begin{itemize}
\item[(1)] \textbf{Coupling}: If $n$ is an integer multiple of $N_\bc$, then
$T^{n}_*\nu_{n}=\Theta_\bc m_U$; otherwise, $\nu_n$ is null.
\item[(2)] \textbf{Exponential tail}:
$
\sum_{k> n}\nu_{k} (M)\le (1-\Theta_\bc)^{n/N_\bc}.
$
\end{itemize}
\end{theorem}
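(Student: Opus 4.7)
The plan is to carry out the coupling by iterating a single "block" construction of length $N_\bc$: after $N_\bc$ steps, a uniformly positive fraction $\Theta_\bc$ of the current mass can be pulled out as $\Theta_\bc m_U$, while the remainder is again a proper standard family that can be fed back into the construction. I would organize the argument in the following four steps.

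\textbf{Step 1: Preserving properness under iteration.} First I would combine the growth lemma (Lemma~\ref{lem: growth}) with the bound $\cZ(\cG)\le C_\bp$ for the initial proper family to show that there is a constant $N_0\ge 1$, depending only on $q_0,\theta_0,\delta_0$, and $C_\bp$, such that the push-forwards $T^n_*\cG$ remain proper for all $n\ge N_0$. The regularity of densities along forward iterates, needed so that the images are genuinely standard families (not merely $\fF$-families), follows from Assumption (\textbf{H2}) and the definition \eqref{regular constant} of $C_\br$; this is the standard distortion estimate for $\log\rho$ along a branch, producing a factor that sums geometrically by the choice of $C_\br$.

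\textbf{Step 2: Covering ratio over the magnet.} Next I would invoke the covering ratio lemma (Lemma~\ref{lem: covering}) together with Assumption (\textbf{H3}) to extract a uniform integer $N_\bc\ge N_0$ and a uniform constant $\eta>0$ such that, for any proper standard family $\cG$,
\[
\sum_{\alpha\,:\,T^{N_\bc}W_\alpha\supset U}\lambda_\alpha\cdot (T^{N_\bc}_*\nu_\alpha)(U)\ \ge\ \eta.
\]
This is the key geometric input: eventual covering promotes a positive fraction of the total mass of $T^{N_\bc}_*\nu_\cG$ into a measure that actually lives over $U$ with density bounded below by a uniform multiple of $m_U$, the lower bound coming once more from the dynamical-H\"older regularity of $\log\rho$ on standard pairs (Definition~\ref{def: sp}) combined with the bounded distortion along $N_\bc$ iterates guaranteed by Assumption (\textbf{H2}).

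\textbf{Step 3: Peeling off a piece of $\Theta_\bc m_U$.} On each component $T^{N_\bc}W_\alpha\supset U$, the push-forward density is bounded below on $U$ by a uniform constant $c>0$. I would therefore set
\[
\Theta_\bc\ :=\ c\cdot\eta \ \cdot \ |U|,
\]
and define $\nu_{N_\bc}$ to be exactly $\Theta_\bc m_U$ pulled back to $\cG$ via the obvious Radon--Nikodym splitting: write the restriction of $T^{N_\bc}_*\nu_\cG$ to $U$ as $\Theta_\bc m_U+\mu_{\mathrm{rem}}$ with $\mu_{\mathrm{rem}}\ge 0$. The remainder $T^{N_\bc}_*\nu_\cG-\Theta_\bc m_U$ then needs to be re-packaged as a new standard family $\cG'$ on $M$ of total mass $1-\Theta_\bc$. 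The nontrivial point is that after subtracting a uniform density on $U$, the density on each covering component is still log-regular with semi-norm $\le C_\br$: this uses the choice $C_\bp\ge 10 c_0 e^{7C_\br}$ in \eqref{choose Zp} together with the fact that subtracting a constant (times $m_U$) from a positive log-H\"older density on $U$ preserves the H\"older semi-norm up to a controlled increase, absorbed by enlarging $c$ if necessary. The components that do not cover $U$ are not touched and remain standard pairs. Finally, I would normalize the rearranged family to verify $\cZ(\cG'/(1-\Theta_\bc))\le C_\bp$, again by the growth/covering estimates of Steps~1--2 applied in reverse.

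\textbf{Step 4: Iteration and tail bound.} With the block construction in hand, I would inductively define $\nu_{kN_\bc}$ by applying the block to the remainder family $\cG_{k-1}'$ produced after $k-1$ blocks, and set $\nu_n=0$ if $N_\bc\nmid n$. By construction each application peels off a proportion $\Theta_\bc$ of the remaining mass, so
\[
\sum_{k>j}\nu_{kN_\bc}(M)\ =\ (1-\Theta_\bc)^{j},
\]
which immediately yields the exponential-tail estimate. Passing to $n$ not a multiple of $N_\bc$ only replaces $j$ by $\lfloor n/N_\bc\rfloor$, giving the stated bound.

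The main obstacle I expect is Step~3: verifying that the uncoupled remainder $T^{N_\bc}_*\nu_\cG-\Theta_\bc m_U$, once cut along $\xi_{N_\bc}$ and pulled back, really does decompose into a \emph{proper} standard family in the sense of \eqref{choose Zp}. The density-regularity part (keeping $\log\rho$ within $C_\br$-semi-norm after subtracting a constant multiple of $\mathbf{1}_U$) and the $\cZ$-function bound for the rearranged family have to be balanced against the choice of $C_\bp$; this is precisely where the inequality $C_\bp\ge 10c_0 e^{7C_\br}$ is used, and where the uniformity of $\Theta_\bc$ is bought. Everything else is essentially bookkeeping around the growth and covering lemmas already established in the paper.
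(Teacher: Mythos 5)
Your overall architecture is the same as the paper's: iterate to restore properness (growth lemma), use the covering lemma to place a definite fraction of mass over $U$, peel off a uniform multiple of $m_U$, and show the remainder is again a proper standard family to be fed back into the block. However, Step~3 contains a genuine gap in exactly the place you flag as the main obstacle, and the fix you propose does not work. If $\rho$ is a density on $U$ with $|\log\rho|_{U,\bgamma}\le C_\br$ and you subtract a constant $\brho>0$, then $|\log(\rho-\brho)|_{U,\bgamma}$ is \emph{strictly larger} than $C_\br$ for every $\brho>0$ (the paper's computation in the proof of Lemma~\ref{lem: split} gives the excess $\brho C_\br e^{C_\br}/((1-\brho e^{C_\br})|U|)$, which never vanishes). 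So the uncoupled remainder is \emph{not} a standard family, no matter how the constants are tuned; and "enlarging $c$" goes in the wrong direction, since a larger peeled-off constant makes the distortion of $\log(\rho-\brho)$ worse, not better. Moreover, the inequality $C_\bp\ge 10c_0e^{7C_\br}$ has nothing to do with this density-regularity issue: it only controls the $\cZ$-function (average length/properness), not the dynamical H\"older semi-norm of $\log\rho$, which is governed by $C_\br$, $\bgamma$ and Assumption \textbf{(H2)}.

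The paper's resolution, which your proposal is missing, is two-fold. First, the Lebesgue ratio $\brho$ is taken \emph{small} (below a threshold $\brho_\bc$ depending on $U$), so that the split part lands in a relaxed regularity class with semi-norm at most $\frac{1+\bgamma}{2\bgamma}C_\br$; then \emph{one extra iterate of $T$} contracts the separation-time semi-norm by the factor $\bgamma$ and restores genuine standardness (Remarks~\ref{remark non-standard 1} and~\ref{remark non-standard 2}). This is why the block length is $N_\bc=1+n_\bp+n_\bc$ rather than just $n_\bp+n_\bc$. Second, before splitting one must \emph{cut} each covering component exactly along the two endpoints of $U$ (the "Cutting" step, costing the factor $e^{-C_\br}|U|$ in $\Theta_\bc$ via \eqref{delta p4} and a bounded increase of $\cZ$ via Lemma~\ref{lem: cut length}); without this cut, subtracting $\Theta_\bc\,\bbone_U$ from a density supported on an interval strictly containing $U$ creates jump discontinuities at $\partial U$, so the remainder cannot be log-H\"older on that component at all. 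With these two repairs your Step~3 closes, and Steps~1, 2 and 4 are correct as written.
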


\begin{remark}
Note that the choices of $\{\nu_n\}_{n\ge 1}$ are not unique in the coupling lemma.
As we do not pursue the optimal values for the constants $N_\bc$ and $\Theta_\bc$,
we shall construct a slow coupling process in the proof of Theorem~\ref{thm: coupling}.
\end{remark}

\subsubsection{Absolutely continuous invariant measure}

The equidistribution property immediately follows from Theorem~\ref{thm: coupling}.

\begin{theorem}\label{thm: equi}
For any two proper standard families $\cG^1$ and $\cG^2$ and any $n\ge 0$,
\beqn
\left\| T^n_* \nu_{\cG^1} - T^n_*\nu_{\cG^2} \right\|_{TV} \le 2 (1-\Theta_\bc)^{n/N_\bc},
\eeqn
where $\|\cdot\|_{TV}$ denotes the total variation norm, and $\Theta_\bc, N_\bc$ are given by
Theorem~\ref{thm: coupling}.
\end{theorem}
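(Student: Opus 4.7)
The plan is to derive Theorem~\ref{thm: equi} as an essentially immediate corollary of the coupling lemma (Theorem~\ref{thm: coupling}), by separately decomposing each family, cancelling the parts that have already been coupled by time $n$, and estimating the uncoupled residuals via the exponential tail bound.

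\medskip

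Concretely, first I would apply Theorem~\ref{thm: coupling} to each of the two proper standard families $\cG^1$ and $\cG^2$, obtaining decompositions
\[
\nu_{\cG^i}=\sum_{k\ge 1}\nu_k^i, \qquad i=1,2,
\]
where for each $k$ that is a positive integer multiple of $N_\bc$ one has $T^k_*\nu_k^i=\Theta_\bc m_U$, and $\nu_k^i$ is null otherwise. Next I would observe that for any $k\le n$ with $k$ a multiple of $N_\bc$,
\[
T^n_*\nu_k^i=T^{n-k}_*\bigl(T^k_*\nu_k^i\bigr)=T^{n-k}_*\bigl(\Theta_\bc m_U\bigr),
\]
which does not depend on the index $i\in\{1,2\}$. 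Hence these ``already coupled'' contributions cancel exactly when we subtract the two pushforward measures:
\[
T^n_*\nu_{\cG^1}-T^n_*\nu_{\cG^2}
=\sum_{k>n}T^n_*\nu_k^1-\sum_{k>n}T^n_*\nu_k^2 .
\]

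\medskip

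The remaining step is to control the two uncoupled tails. Since $T^n_*$ does not increase total variation for non-negative measures, and using the exponential tail bound from Theorem~\ref{thm: coupling},
\[
\Bigl\| T^n_*\!\!\sum_{k>n}\nu_k^i\Bigr\|_{TV}
\le \sum_{k>n}\nu_k^i(M)\le (1-\Theta_\bc)^{n/N_\bc},\qquad i=1,2.
\]
Combining this with the triangle inequality gives
\[
\bigl\| T^n_*\nu_{\cG^1}-T^n_*\nu_{\cG^2}\bigr\|_{TV}
\le 2(1-\Theta_\bc)^{n/N_\bc},
\]
which is the claimed estimate.

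\medskip

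There is essentially no hard step here; the entire content of Theorem~\ref{thm: equi} is packaged inside Theorem~\ref{thm: coupling}. The only minor points to be careful about are (i) that the decompositions given by the coupling lemma for $\cG^1$ and $\cG^2$ are independent of each other, so one must invoke the coupling lemma separately for each family and rely on the fact that \emph{both} coupled parts equal $T^{n-k}_*(\Theta_\bc m_U)$ in order to subtract them, and (ii) that the tail bound $\sum_{k>n}\nu_k(M)\le(1-\Theta_\bc)^{n/N_\bc}$ in Theorem~\ref{thm: coupling} is stated for every $n\ge 0$, not only for $n$ a multiple of $N_\bc$, so no rounding or loss of an extra constant is incurred.
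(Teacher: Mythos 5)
Your proof is correct and follows essentially the same route as the paper: decompose each family via Theorem~\ref{thm: coupling}, cancel the coupled pieces (which both equal $T^{n-k}_*(\Theta_\bc m_U)$), and bound the two uncoupled tails by the exponential tail estimate. The paper's proof is a three-line display doing exactly this, so there is nothing further to add.
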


The existence of an absolutely continuous invariant measure is a direct consequence of
Theorem~\ref{thm: equi}. Furthermore, iterates of any standard family converge exponentially to such measure.

\begin{theorem}\label{thm: acip}
There exists an absolutely continuous $T$-invariant probability measure $\mu$ on $M$.
Moreover, there exist constants $C_\bc>0$ and $\vartheta_\bc\in (0, 1)$, such that
for any standard family $\cG\in \fF$ and any $n\ge 0$,
\beq\label{equ:totalvariation1}
\left\| T^n_* \nu_{\cG} - \mu \right\|_{TV} \le C_\bc \vartheta_\bc^{n} \cZ(\cG).
\eeq
\end{theorem}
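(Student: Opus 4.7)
The plan is to deduce Theorem \ref{thm: acip} by combining the equidistribution bound in Theorem \ref{thm: equi} with the growth lemma (Lemma \ref{lem: growth}), first constructing $\mu$ as a total-variation limit and then extending the rate from proper families to all of $\fF$.

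First I would fix a concrete proper standard family $\cG^0$, taking for instance the trivial pair $(M, m)$ (the density $\rho\equiv 1$ has $|\log \rho|_{M,\bgamma}=0\le C_\br$, and $\cZ(\cG^0)=1\le C_\bp$). The growth lemma will say that if $\cG$ is proper then so is the reinterpretation of $T^k_*\cG$ as a standard family via the partition $\xi_1$; iterating, every $T^k_*\cG^0$ is again proper. Applying Theorem \ref{thm: equi} to $\cG^0$ and to the standard family equivalent to $T^k_*\cG^0$, one obtains
\beq\label{cauchy}
\|T^{n+k}_*\nu_{\cG^0}-T^n_*\nu_{\cG^0}\|_{TV}\le 2(1-\Theta_\bc)^{n/N_\bc}
\eeq
uniformly in $k$, so $\{T^n_*\nu_{\cG^0}\}$ is Cauchy in $\|\cdot\|_{TV}$. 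Let $\mu$ be the limit. It is a probability measure, absolutely continuous w.r.t.\ $m$ (both properties pass to TV-limits since TV dominates the $L^1$ difference of densities), and it is $T$-invariant because push-forward is continuous in the TV norm, so $T_*\mu=\lim_n T^{n+1}_*\nu_{\cG^0}=\mu$.

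For the rate, first let $\cG$ be any proper standard family. Theorem \ref{thm: equi} applied to $\cG$ and $\cG^0$, together with passage to the limit in the second term, yields
\beqn
\|T^n_*\nu_\cG-\mu\|_{TV}\le 2(1-\Theta_\bc)^{n/N_\bc},
\eeqn
which is already \eqref{equ:totalvariation1} with $\vartheta_\bc=(1-\Theta_\bc)^{1/N_\bc}$, since $\cZ(\cG)\ge 1$. For a general $\cG\in\fF$, invoke the growth lemma, whose expected form is $\cZ(T^n_*\cG)\le \vartheta_1^n\,\cZ(\cG)+c_1$ for some $\vartheta_1\in(0,1)$ and $c_1>0$ (with $c_1\le C_\bp-1$ after possibly enlarging $C_\bp$, cf. \eqref{choose Zp}). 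Then for $n_0:=\lceil\log(\cZ(\cG))/|\log\vartheta_1|\rceil$ the family $T^{n_0}_*\cG$ is proper, so
\beqn
\|T^n_*\nu_\cG-\mu\|_{TV}\le 2(1-\Theta_\bc)^{(n-n_0)/N_\bc},\qquad n\ge n_0,
\eeqn
while for $n<n_0$ the trivial bound $\|\cdot\|_{TV}\le 2$ is used. Writing this as $2(1-\Theta_\bc)^{n/N_\bc}\cdot(1-\Theta_\bc)^{-n_0/N_\bc}$ and noting $(1-\Theta_\bc)^{-n_0/N_\bc}\le C\,\cZ(\cG)^{\alpha}$ with $\alpha=|\log(1-\Theta_\bc)|/(N_\bc|\log\vartheta_1|)$, one finishes by relaxing the rate: choose any $\vartheta_\bc\in(\max\{\vartheta_1,(1-\Theta_\bc)^{1/N_\bc}\},1)$ large enough so that $(1-\Theta_\bc)^{-n_0/N_\bc}\le C'\vartheta_\bc^{-n_0}\cdot\cZ(\cG)$, i.e.\ so that the remaining polynomial overhead collapses into a linear factor $\cZ(\cG)$. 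Adjusting $C_\bc$ then produces \eqref{equ:totalvariation1} uniformly for all $n$ and all $\cG\in\fF$.

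The main obstacle is the last bootstrap step: the growth lemma provides only a logarithmic burn-in $n_0\lesssim\log\cZ(\cG)$, which naively contributes a polynomial factor $\cZ(\cG)^{\alpha}$ to the TV bound. Getting this down to the \emph{linear} factor $\cZ(\cG)$ required in \eqref{equ:totalvariation1} forces a careful balance between the Chernov one-step constant $\theta_0$ (which sets the growth-lemma rate $\vartheta_1$) and the coupling rate $(1-\Theta_\bc)^{1/N_\bc}$ from Theorem \ref{thm: coupling}; fortunately, since $\vartheta_\bc$ in the theorem statement is only required to lie in $(0,1)$, we have complete freedom to trade decay speed for the correct linear dependence on $\cZ(\cG)$.
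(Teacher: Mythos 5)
Your proposal is correct and follows essentially the same route as the paper: establish a TV-Cauchy sequence via Theorem \ref{thm: equi}, identify the limit $\mu$, and absorb the logarithmic burn-in $n_0\sim\log\cZ(\cG)/|\log\theta_0|$ from the growth lemma into a linear factor $\cZ(\cG)$ by taking $\vartheta_\bc\ge\max\{\theta_0,(1-\Theta_\bc)^{1/N_\bc}\}$ — exactly the paper's choice, which then uses $\vartheta_\bc^{-n_0}\le\theta_0^{-n_0}\le\theta_0^{-1}\cZ(\cG)$. One cosmetic note: the intermediate target inequality $(1-\Theta_\bc)^{-n_0/N_\bc}\le C'\vartheta_\bc^{-n_0}\cZ(\cG)$ is not quite the one you want (its validity would depend on the relative sizes of the two rates); the clean chain is $2(1-\Theta_\bc)^{(n-n_0)/N_\bc}\le 2\vartheta_\bc^{\,n-n_0}\le 2\theta_0^{-1}\vartheta_\bc^{\,n}\cZ(\cG)$, which your choice of $\vartheta_\bc$ already supports.
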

The above Theorem establish a new relationship between the $\cZ$ function and the rates of mixing for initial measures associated to standard families. Equation (\ref{equ:totalvariation1})  makes it a much clearer picture to understand that $\cZ$ function is the only factor that dominates the mixing rates for expanding maps.

In general, the invariant density $h=d\mu/dm\in L^{1}(m)$ could be unbounded when
$T$ has infinitely many inverse branches. To describe such function,
we introduce the space of
\emph{dynamically H\"older series}.

\begin{definition}\label{def: series Holder}
Let $\cW:=\{W_\alpha: \alpha\in \cA\}$ be a
collection of countably many intervals in $M$.
Choose $\gamma\in (0, 1)$ and $t\in [0, 1]$.
A function $f: M\to \IR$ is called a \emph{dynamically H\"older series}
supported on $\cW$ with parameter $\gamma$ and power $t$,
if $f=\sum_{\alpha\in \cA} f_\alpha$
such that each $f_\alpha\in \cH_{W_\alpha, \gamma}$ and
\beqn\label{Holder partition}
\|f\|_{\cW, \gamma, t}
:=\sum_{\alpha\in \cA} |W_\alpha|^{t} \|f_\alpha\|_{W_\alpha, \gamma}<\infty.
\eeqn
We denote by $\cH_{\cW, \gamma, t}$ the space of such functions.
\end{definition}

\begin{remark}\label{rem: observable q}
It is easy to see that $\cH_{\cW, \gamma, t}\subset \cH_{\cW, \gamma, t'}\subset L^1(m)$
for any $0\le t\le t'\le 1$, and $\cH_{\cW, \gamma, 0}\subset L^\infty(m)$.
In particular, if the collection $\cW=\{W\}$, then the space $\cH_{\cW, \gamma, 0}$
coincides with the space $\cH_{W, \gamma}$, which consists of dynamically H\"older continuous functions
supported on $W$ with parameter $\gamma$.
Also, the space $\cH_{\cW, \gamma, t}$ contains unbounded functions if
$t>0$ and the collection $\cW$ has intervals of arbitrary short length.
\end{remark}

Let $\bgamma$ be the constant given in \eqref{regular constant}, and
let $\oq$ be given in \eqref{def oq}.

\begin{theorem}\label{thm: density}
There exists a collection $\cW_h$ of countably many intervals such that
the invariant density $h=d\mu/dm \in \cH_{\cW_h, \bgamma, s}$ for any $s\in (1-\oq, 1]$.
\end{theorem}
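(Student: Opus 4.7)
The plan is to realise $h$ as the pushforward of an invariant density living on the Hofbauer tower $\widehat{M}$ of $(M,T,\xi_1)$, and to exploit the Pesin--Sinai structure of that lifted measure to express $h$ as a convergent series of standard--pair densities. The tower $\widehat{M}$ is built in the usual way: its level-$n$ base consists of the distinct components of $T^{k}W$ for $W\in\xi_{k}$, $k\leq n$, and comes with a natural projection $\pi:\widehat{M}\to M$ and a lifted piecewise monotonic map $\widehat{T}$ satisfying $T\circ\pi=\pi\circ\widehat{T}$. Using Lemma~\ref{second growth}, I will produce a $\sigma$-finite lift $\widehat{m}$ of $m$ whose level-$n$ mass decays geometrically, hence the Cesàro averages $n^{-1}\sum_{k<n}\widehat{T}_{\ast}^{k}\widehat{m}$ are tight on $\widehat{M}$. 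Any weak-$\ast$ limit $\widehat\mu$ is $\widehat T$-invariant, projects under $\pi$ to the invariant probability $\mu$ of Theorem~\ref{thm: acip}, and is absolutely continuous with respect to $\widehat{m}$.

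Next, following the Pesin--Sinai philosophy already announced in the introduction, I will exhibit a decomposition
\begin{equation*}
\widehat\mu=\sum_{\alpha\in\cA}\lambda_{\alpha}\,\widehat\nu_{\alpha},\qquad \sum_{\alpha}\lambda_\alpha=1,
\end{equation*}
where each $\widehat\nu_{\alpha}$ is a probability measure supported on a single tower piece whose density against $\widehat m$ is of standard--pair shape: its logarithm is dynamically $\bgamma$-Hölder with semi-norm at most $C_{\br}$. This is natural because each tower piece is obtained by iterating $T$ on an element of $\xi_{n}$, so the distortion bound follows from Assumption~\textbf{(H2)} combined with \eqref{regular constant} and the telescoping argument already used to define standard pairs. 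Projecting down via $\pi$ yields a standard family $\cG_{h}=\{\lambda_{\alpha}(W_{\alpha},\nu_{\alpha})\}_{\alpha\in\cA}$ with $d\mu/dm=h=\sum_{\alpha}\lambda_{\alpha}h_{\alpha}\,\bbone_{W_{\alpha}}$, where $h_{\alpha}=d\nu_{\alpha}/dm$.

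Having set $\cW_{h}:=\{W_{\alpha}\}_{\alpha\in\cA}$, I will bound the series norm in Definition~\ref{def: series Holder} by direct estimates on each standard pair. Since $\log h_{\alpha}$ has oscillation at most $C_{\br}$ on $W_{\alpha}$ and $\int_{W_{\alpha}}h_{\alpha}\,dm=1$, the mean value bound gives $\|h_{\alpha}\|_{\infty}\leq e^{C_{\br}}/|W_{\alpha}|$ and hence $|h_{\alpha}|_{W_{\alpha},\bgamma}\leq C_{\br}e^{C_{\br}}/|W_{\alpha}|$, so $\|\lambda_{\alpha}h_{\alpha}\|_{W_{\alpha},\bgamma}\lesssim \lambda_{\alpha}|W_{\alpha}|^{-1}$. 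Summing,
\begin{equation*}
\|h\|_{\cW_{h},\bgamma,s}\;\lesssim\;\sum_{\alpha}\lambda_{\alpha}\,|W_{\alpha}|^{s-1}.
\end{equation*}
Given $s\in(1-\oq,1]$, the choice of a scale $q_{0}\in(1-s,\oq)$ forces $s-1\geq -q_{0}$, so $|W_{\alpha}|^{s-1}\leq |W_{\alpha}|^{-q_{0}}$ (up to the fixed constant coming from $|W_{\alpha}|\leq 1$) and the right-hand side is dominated by $\cZ(\cG_{h})$ evaluated at scale $q_{0}$ in the sense of \eqref{cZ}.

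The main technical obstacle I expect is the properness $\cZ(\cG_{h})<\infty$: the Pesin--Sinai standard--pair decomposition on the tower naturally assigns weights $\lambda_{\alpha}$ proportional to $\widehat\mu$-masses, and it must be shown that the geometric decay of these masses (provided by the second growth lemma and the one-step expansion of Assumption~\textbf{(H1)} at a scale $q_{0}<\oq$) translates to finiteness of $\sum_{\alpha}\lambda_{\alpha}|W_{\alpha}|^{-q_{0}}$. Concretely, this reduces to applying Lemma~\ref{lem: growth} to the initial proper standard family used to construct $\mu$, propagating the $\cZ$-bound through the limit $\widehat{T}^{n}_{\ast}\widehat{m}\to\widehat\mu$, and checking that the Pesin--Sinai pieces inherit the same $\cZ$-finite weighting. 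Once this compatibility is established, the computation of the previous paragraph finishes the proof.
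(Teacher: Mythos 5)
Your proposal follows the paper's own route almost step for step: the Hofbauer tower, tightness of the Ces\`aro lifts via Lemma~\ref{second growth}, a weak-$*$ limit $\hmu$ with $\pi_*\hmu=\mu$, a Pesin--Sinai decomposition of $\hmu$ into standard pairs, the per-pair bound $\|h_D\|_{D,\bgamma}\lesssim \eta(D)|D|^{-1}$ from Lemma~\ref{lem: sp property}, and the choice of a scale $q_0\in(1-s,\oq)$. The one genuine divergence is the final summation. The paper never proves $\cZ(\cG_h)<\infty$; it instead extracts from the second growth lemma the distributional tail bound $\sum_{D\colon |D|<\eps}\eta(D)<C_\bp\,\eps^{q_0}$ and sums dyadically over $\cD_{2^{-n}}\backslash\cD_{2^{-n-1}}$, which converges precisely because $s>1-q_0$ strictly. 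You instead dominate $\sum_D\eta(D)|D|^{s-1}$ by $\cZ(\cG_h)$ at scale $q_0$ and reduce everything to properness of the limiting family. Note that this is strictly stronger than what the paper verifies: the tail bound alone does \emph{not} imply $\cZ(\cG_h)<\infty$ (weights behaving like $\eta(D)\sim |D|^{q_0}/\log(1/|D|)$ satisfy the tail bound while the $\cZ$-sum diverges). Your reduction is nevertheless salvageable exactly as you sketch: Lemma~\ref{lem: growth} gives $\cZ(T^n(M,m))\le C_\bp$ uniformly in $n$, hence the same bound for the Ces\`aro families, and since each tower piece $\hD$ is clopen in $\hM$ the weights converge pointwise, $\eta_{n_j}(D)\to\eta(D)$, so Fatou's lemma yields $\cZ(\cG_h)\le C_\bp$. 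With that supplied --- and with the Pesin--Sinai limit lemma, which is the heaviest part of the paper's proof and which your plan only asserts --- your argument closes; it even covers the endpoint $s=1-q_0$, though this adds nothing since $q_0$ is free in $(1-s,\oq)$.
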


\subsubsection{Stochastic properties}

In the rest of this subsection, we let $\bgamma$ and $\oq$ be given by \eqref{regular constant}
and \eqref{def oq}  respectively.
Also, let $\mu$ be the absolutely continuous invariant measure obtained in Theorem~\ref{thm: acip}.
We first show the system $(T, \mu)$ enjoys
exponential decay of correlations for dynamically H\"older series against bounded observables.

\begin{theorem}\label{thm: mixing}
For any  $t\in [0, \oq)$, there are constants $C_t>0$ and $\vartheta_t\in (0, 1)$ such that
for any $f\in \cH_{\cW, \bgamma, t}$ on some collection $\cW$ of countably many intervals
and for any $g\in L^\infty(m)$, we have
\beq\label{mu mixing}
\left| \int f  g\circ T^n   d\mu - \int f  d\mu \int g  d\mu \right|\le
C_t \vartheta_t^{n}  \|f\|_{\cW, \bgamma, t} \|g\|_{\infty}.
\eeq
\end{theorem}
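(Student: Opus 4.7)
The plan is to reduce the correlation estimate to a total variation estimate, then decompose the signed measure $f\,d\mu$ into a signed combination of standard pair measures and apply the equidistribution estimate from Theorem \ref{thm: acip} term by term.

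First I would write
\begin{equation*}
\int f\cdot g\circ T^{n}\,d\mu - \int f\,d\mu\int g\,d\mu
=\int g\,d\sigma_{n},\qquad
\sigma_{n}:=T^{n}_{*}(f\,d\mu)-\Bigl(\int f\,d\mu\Bigr)\mu,
\end{equation*}
so that $|\int f\cdot g\circ T^{n}d\mu - \int f\,d\mu\int g\,d\mu|\le \|g\|_{\infty}\|\sigma_{n}\|_{TV}$, reducing the claim \eqref{mu mixing} to the bound $\|\sigma_n\|_{TV}\le C_{t}\vartheta_{t}^{n}\|f\|_{\cW,\bgamma,t}$.

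Next I would fix a scale $q_{0}\in(0,\min(\oq,1-t))$, which is permissible because $t<\oq<1$. By Theorem \ref{thm: density} and the Pesin--Sinai-type decomposition described in the introduction, $\mu$ is itself carried by a proper standard family $\cG_{\mu}=\{(U_{\alpha},\mu_{\alpha}),\lambda_{\alpha}\}$, so $\mu=\sum_{\alpha}\lambda_{\alpha}\mu_{\alpha}$. Writing $f=\sum_{\beta}f_{\beta}$ with $f_{\beta}\in\cH_{W_{\beta},\bgamma}$ and setting $c_{\beta}=2\|f_{\beta}\|_{W_{\beta},\bgamma}$, the function $(f_{\beta}+c_{\beta})$ is positive on $W_{\beta}$ and $\log(f_{\beta}+c_{\beta})$ is dynamically H\"older with seminorm dominated by a fixed constant times $|f_{\beta}|_{W_{\beta},\bgamma}/c_{\beta}\le 1$. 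Restricting to $U_{\alpha,\beta}:=U_{\alpha}\cap W_{\beta}$ and using the $C_{\br}$-regularity of $\log\rho_{\alpha}$, the decomposition
\begin{equation*}
f\,d\mu=\sum_{\alpha,\beta}\lambda_{\alpha}\Bigl[a^{+}_{\alpha,\beta}\,\nu^{+}_{\alpha,\beta}-a^{-}_{\alpha,\beta}\,\nu^{-}_{\alpha,\beta}\Bigr]
\end{equation*}
represents $f\,d\mu$ as a signed sum of (singleton) standard family measures on the intervals $U_{\alpha,\beta}$, with coefficients satisfying $a^{\pm}_{\alpha,\beta}\lesssim \|f_{\beta}\|_{W_{\beta},\bgamma}\,\mu_{\alpha}(U_{\alpha,\beta})$.

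Applying Theorem \ref{thm: acip} to each $\nu^{\pm}_{\alpha,\beta}$ (noting $\cZ(\nu^{\pm}_{\alpha,\beta})=|U_{\alpha,\beta}|^{-q_{0}}$) and summing gives
\begin{equation*}
\|\sigma_{n}\|_{TV}\le 2C_{\bc}\vartheta_{\bc}^{\,n}\sum_{\alpha,\beta}\lambda_{\alpha}\,\|f_{\beta}\|_{W_{\beta},\bgamma}\,\mu_{\alpha}(U_{\alpha,\beta})\,|U_{\alpha,\beta}|^{-q_{0}}.
\end{equation*}
Using the regularity of $\rho_{\alpha}=d\mu_{\alpha}/dm$ one has $\mu_{\alpha}(U_{\alpha,\beta})\asymp |U_{\alpha,\beta}|/|U_{\alpha}|$; combined with the identity $\sum_{\alpha}\lambda_{\alpha}|U_{\alpha,\beta}|/|U_{\alpha}|\asymp\mu(W_{\beta})$ and the standard concavity inequality $\sum_{\alpha}x_{\alpha}^{1-q_{0}}\le N_{\beta}^{q_{0}}\bigl(\sum_{\alpha}x_{\alpha}\bigr)^{1-q_{0}}$, the inner sum collapses to $|W_{\beta}|^{1-q_{0}}$ up to a constant. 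Since $q_{0}\le 1-t$ and $|W_{\beta}|\le 1$, we have $|W_{\beta}|^{1-q_{0}}\le|W_{\beta}|^{t}$, so the outer sum is bounded by a constant multiple of $\sum_{\beta}\|f_{\beta}\|_{W_{\beta},\bgamma}|W_{\beta}|^{t}=\|f\|_{\cW,\bgamma,t}$, and we set $\vartheta_{t}:=\vartheta_{\bc}$.

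The hard part will be the last summation step: one must control $\sum_{\alpha}\lambda_{\alpha}|U_{\alpha,\beta}|^{1-q_{0}}$ by $|W_{\beta}|^{1-q_{0}}$ uniformly in $\beta$, even when infinitely many tiny intervals $U_{\alpha}$ from $\cG_{\mu}$ meet $W_{\beta}$. The key tool will be to split the sum according to whether $|U_{\alpha}|\le|W_{\beta}|$ or not, treating the former by concavity applied inside $W_{\beta}$ and the latter by using $|U_{\alpha,\beta}|\le|W_{\beta}|$ together with the properness estimate $\sum_{\alpha}\lambda_{\alpha}|U_{\alpha}|^{-q_{0}}\le C_{\bp}$ from \eqref{choose Zp}. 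This is exactly the interplay between the $\cZ$-function and the geometry of the magnet that the paper has been setting up, and it is what makes the crucial exponent $t<\oq$ (rather than $t\le 1$) appear in the theorem.
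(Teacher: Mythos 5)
Your plan follows a genuinely different route from the paper's. The paper first proves a Lebesgue-mixing estimate (Lemma~\ref{lem: Leb mixing}) by writing $f\,dm$ as the difference of two standard families with densities $f_\alpha+2K_\alpha$ and $2K_\alpha$, and then passes from $m$ to $\mu$ by the substitution $\int f\,g\circ T^n\,d\mu=\int (fh)\,g\circ T^n\,dm$, using Theorem~\ref{thm: density} to place $fh$ in $\cH_{\cW\vee\cW_h,\bgamma,t+s}$ for $s\in(1-\oq,1]$ and running the Lebesgue lemma at the scale $q_0=1-(t+s)$. You instead decompose $f\,d\mu$ directly over the standard family carrying $\mu$ and apply Theorem~\ref{thm: acip} fiberwise. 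That outline is legitimate, and your first two steps (the reduction to $\|\sigma_n\|_{TV}$ and the positivity/regularization trick on each $U_\alpha\cap W_\beta$) are sound modulo a harmless enlargement of the regularity constant (multiplying $f_\beta+c_\beta$ by $\rho_\alpha$ pushes the log-H\"older seminorm above $C_\br$, which is absorbed by Remarks~\ref{remark non-standard 1}--\ref{remark non-standard 2}).

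The gap is in the step you yourself flag as the hard part, and it is not merely technical. First, the properness $\sum_\alpha\lambda_\alpha|U_\alpha|^{-q_0}\le C_\bp$ that you invoke for $\cG_\mu$ is not what the paper establishes: \eqref{sum eta} gives only the $\wcZ$-type bound $\sum_{|D|<\eps}\eta(D)<C_\bp\eps^{q_0}$, and at the \emph{same} scale the $\cZ$-sum $\sum_D\eta(D)|D|^{-q_0}$ can diverge (each dyadic block $2^{-n-1}\le|D|<2^{-n}$ contributes a quantity bounded below only by a constant). Second, the concavity inequality $\sum_\alpha x_\alpha^{1-q_0}\le N_\beta^{q_0}(\sum_\alpha x_\alpha)^{1-q_0}$ is unusable here: the intervals $U_\alpha$ (the sets $D\in\cD$ from the Hofbauer tower) overlap heavily, infinitely many of them meet a fixed $W_\beta$, $N_\beta$ is uncontrolled, and $\sum_\alpha|U_{\alpha,\beta}|$ is not dominated by $|W_\beta|$. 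Consequently the asserted collapse of the inner sum to $|W_\beta|^{1-q_0}$ does not follow, and with a \emph{single} scale $q_0$ it is in fact false: the contribution of the $\alpha$ with $|U_\alpha|\le|W_\beta|$ is controlled by $\sum_{|U_\alpha|\le|W_\beta|}\lambda_\alpha|U_\alpha|^{-q_0}$, which the $\wcZ$-bound at scale $q_0$ does not make finite. The fix requires two distinct scales: run the Pesin--Sinai decomposition of $\mu$ at a scale $q_1<\oq$ close to $\oq$, so that $\sum_{|U_\alpha|<\eps}\lambda_\alpha\lesssim\eps^{q_1}$, and take the $\cZ$-exponent of the pieces to be some $q_0<q_1-t$; then both halves of your splitting yield a bound of order $|W_\beta|^{q_1-q_0}\le|W_\beta|^{t}$, and the hypothesis $t<\oq$ is exactly what makes such a pair $(q_0,q_1)$ available. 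This two-scale bookkeeping is precisely what the paper encodes as ``$h\in\cH_{\cW_h,\bgamma,s}$ with $s>1-\oq$'' combined with ``scale $q_0=1-(t+s)$''; without it, your final inequality $|W_\beta|^{1-q_0}\le|W_\beta|^{t}$ is being applied to a bound you have not actually obtained.
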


Note that \eqref{mu mixing} is automatic for any bounded dynamically H\"older continuous
function $f\in \cH_{M, \bgamma}$.
In fact, for such bounded observables, we can show the exponential multiple decay of correlations,
and thus prove the central limit theorem (CLT) by the ``big small block technique" (see
\S 7.6-7.8 in \cite{MR2229799} for more details). Moreover, we can further establish
a functional generalization of the CLT -
the almost sure invariance principle (ASIP), which asserts that the stationary random process
$\{f\circ T^n\}_{n\ge 0}$ can be well approximated by a Brownian motion with an almost sure error.
We refer the readers to the papers \cite{MR2175992, MR2219528, MR2510014, MR2663640,
MR3152743, MR3646763, MR3784542} for the ASIP of stationary process generated
by bounded observables in various smooth dynamics.

However, when $f$ is an unbounded observable,
the CLT and ASIP may fail for
some obvious reasons, for instance, $f\not\in L^2(\mu)$ and thus
the corresponding process $\{f\circ T^n\}_{n\ge 0}$ has no finite variance.
In order to establish the limiting theorems for such process,
we need some to add some extra conditions, such as moment controls in \cite{MR3856951, MR3880492}.
In this paper, we impose the following conditions on the
dynamically H\"older series $f\in \cH_{\cW, \gamma, t}$.

\begin{definition}
Recall that $\cS_n$ is the set of endpoints of intervals in the partition $\xi_n$.
A collection $\cW=\{W_\alpha: \alpha\in \cA\}$ is \emph{adapted} if
for any $\alpha\in \cA$, there exists $n(\alpha)\in \IN$ such that
the two endpoints of $W_\alpha$ belong to $\cS_{n(\alpha)}$.

A function $f\in \cH_{\cW, \gamma, t}$ is \emph{adapted}
if the collection $\cW$ is adapted, and
\beq\label{Holder partition ad}
\|f\|^{ad}_{\cW, \gamma, t}
:=\sum_{\alpha\in \cA}  \mu(W_\alpha)^{t} \|f_\alpha\|_{W_\alpha, \gamma}<\infty.
\eeq
We denote by $\cH^{ad}_{\cW, \gamma, t}$ the space of functions satisfying \eqref{Holder partition ad}.

Assume that $t\in [0, \frac12)$. We further say that $f\in \cH^{ad}_{\cW, \gamma, t}$ has \emph{fast tail} if
there is $a>\max\left\{\frac{11}{2}, \ \frac{2+3t}{1-2t}\right\}$ such that
\beq\label{fast tail}
\sum_{\alpha\in \cA: \ n(\alpha)\ge n} \|f_\alpha\|_{L^{1/t}(\mu)}
=\cO\left( n^{-a} \right).
\eeq
\end{definition}

\begin{remark}
Note that $\cH^{ad}_{\cW, \gamma, t}\in L^{1/t}(\mu)\subset L^2(\mu)$ for $t\in [0, \frac12)$.
Also, it is automatic that
a dynamically H\"older function $f\in \cH_{W, \gamma}$ is
adapted and has fast tail if $W=M$ or $W\in \xi_n$ for some $n\ge 1$.
As we shall see in the proof of Theorem~\ref{thm: ASIP} below,
an adapted function $f\in \cH^{ad}_{\cW, \gamma, t}$
with fast tail can be well approximated by its conditional expectations with respect to the partition $\xi_n$.
\end{remark}

We denote $\IE(f)=\int f d\mu$ for any $f\in L^1(\mu)$,
and denote the covariance for $f, g\in L^2(\mu)$ by
$\Cov(f, g):=\IE(f g) -\IE(f) \IE(g)$. Then the variance of $f\in L^2(\mu)$
is  given by $\Var(f)=\Cov(f, f)$.

We now state the ASIP (and thus CLT) for the stationary process
generated by an adapted observable.

\begin{theorem}\label{thm: ASIP}
Fix any $t\in \left[0, \frac12 \right)$.
Let $f\in \cH^{ad}_{\cW, \bgamma, t}$ be of fast tail, such that
its auto-correlations satisfy that
\beq\label{cor 1m}
\left| \Cov(f, \ f\circ T^n) \right| =\cO\left( n^{-\frac{16}{15}} \right).
\eeq

%
%\textcolor[rgb]{0.00,0.00,1.00}{HZ: This 16/15 is strange, should be explained right after the statement of this Theorem. Also, how to check this condition?}
%{\color{red} See the remark below}

Then the stationary process $\{f\circ T^n\}_{n\ge 0}$
satisfies the ASIP,
that is, there exist a constant $\lambda\in \left(0, \frac12\right)$ and a Wiener process $W(\cdot)$ such that
\beqn\label{ASIP12}
\left|\sum_{k=0}^{n-1} f \circ T^k - n \ \IE(f) - W\left(n \sigma_f^2 \right) \right|=\cO(n^{\lambda}), \  \ \text{a.s.}.
\eeqn
where $\sigma_f^2$ is given by the Green-Kubo formula, i.e.,
\beq\label{sigma_f}
\sigma^2_f:=\Var(f)+ 2\sum_{n=1}^\infty \Cov(f, \ f\circ T^n) \in [0, \infty).
\eeq
\end{theorem}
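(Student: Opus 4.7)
The strategy is to reduce the ASIP for the unbounded observable $f$ to an ASIP for bounded truncations plus a polynomial-rate almost-sure control of the tail. Using the adapted decomposition $f = \sum_{\alpha \in \cA} f_\alpha$, for each $N \ge 1$ set
\[
f^{(N)} := \sum_{\alpha \colon n(\alpha) \le N} f_\alpha, \qquad r^{(N)} := f - f^{(N)}.
\]
Since the endpoints of each $W_\alpha$ with $n(\alpha) \le N$ lie in $\cS_N$, the truncation $f^{(N)}$ is bounded and is dynamically H\"older continuous with parameter $\bgamma$ on the $\xi_N$-measurable support $\bigcup_{n(\alpha)\le N} W_\alpha$; its dynamical H\"older norm can grow at most polynomially in $N$. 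Meanwhile, the fast-tail assumption (\ref{fast tail}) gives $\|r^{(N)}\|_{L^{1/t}(\mu)} = \cO(N^{-a})$, and therefore $\IE(r^{(N)}) = \cO(N^{-a})$ as well.

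For the bounded part, apply Theorem~\ref{thm: mixing} with $t=0$ to obtain exponential decay of correlations between $f^{(N)}$ and any $L^\infty$ observable; this extends to exponential multiple mixing for $f^{(N)}$ via the standard big/small-block decomposition hinted at in the paragraph preceding Theorem~\ref{thm: ASIP}. From here, the ASIP with error $\cO(n^{\lambda_0})$ for some $\lambda_0 < \tfrac12$ follows from the general martingale-approximation ASIP for exponentially mixing bounded dynamically H\"older observables (in the spirit of Cuny--Merlev\`ede or Gou\"ezel), whose only input is multiple exponential mixing. The asymptotic variance produced in this ASIP is the Green--Kubo sum for $f^{(N)}$; since (\ref{cor 1m}) is summable and $\|r^{(N)}\|_{L^2(\mu)} \to 0$, this sum converges to the $\sigma_f^2$ in (\ref{sigma_f}), which in particular is finite.

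To control the tail, use stationarity and (\ref{cor 1m}) to bound
\[
\left\| \sum_{k=0}^{n-1} \left( r^{(N)} - \IE(r^{(N)}) \right) \circ T^k \right\|_{L^2(\mu)}^2 \le n \sum_{|k|< n} \bigl|\Cov(r^{(N)}, r^{(N)} \circ T^k)\bigr|,
\]
splitting the covariance sum into $|k|\le K$ (handled by H\"older with $\|r^{(N)}\|_{L^{1/t}(\mu)}^2 = \cO(N^{-2a})$ and a $K$-dependent factor coming from $L^{1/t}$--$L^{(1-t)/t}$ duality) and $|k|>K$ (handled by absorbing $r^{(N)}$ into $f$ and into its conditional expectation on $\xi_{n(\alpha)}$, then invoking (\ref{cor 1m})). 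Balancing $K$ against $N$, apply a Rademacher--Menshov or Serfling-type maximal inequality to lift the moment bound to $\max_{k\le n}|\sum_{j<k}(r^{(N_n)}-\IE r^{(N_n)})\circ T^j|$, and conclude by Borel--Cantelli along the dyadic times $n=2^m$ with $N_n := n^b$ for a suitable exponent $b$; the two hypotheses $a > \tfrac{11}{2}$ and $a > \tfrac{2+3t}{1-2t}$ are tailored so that both the $L^{1/t}$ maximal-inequality step and the Borel--Cantelli summation step produce an almost-sure tail error of size $\cO(n^{\lambda_1})$ for some $\lambda_1 < \tfrac12$. Combining with the ASIP for $f^{(N_n)}$ and with $|\IE(r^{(N_n)})| n = \cO(n^{1-a b})$ yields the ASIP for $f$ with $\lambda := \max\{\lambda_0, \lambda_1\} < \tfrac12$.

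The main obstacle is the rate-matching: the dynamical H\"older norm of $f^{(N)}$ grows with $N$ (inflating the implicit constant in the bounded-observable ASIP), while the tail $r^{(N)}$ shrinks only polynomially at rate $N^{-a}$, so the truncation level $N_n$ must be chosen carefully to make both errors sub-polynomial in $n^{1/2}$. The particular exponent $\tfrac{16}{15}$ in the auto-correlation hypothesis (\ref{cor 1m}) enters at the large-$k$ estimate of the covariance sum: it is precisely the threshold beyond which a Rio/Cuny--Merlev\`ede type maximal inequality extracts an ASIP error strictly below $n^{1/2}$ after the dyadic Borel--Cantelli step, and sharpening it would require a more delicate maximal inequality.
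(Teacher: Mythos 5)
Your truncation scheme has a genuine gap at the final combination step. For each \emph{fixed} $N$, a bounded-observable ASIP gives a Wiener process $W_N$, a variance $\sigma^2_{f^{(N)}}$, and an almost-sure bound $\bigl|\sum_{k<n} f^{(N)}\circ T^k - n\,\IE(f^{(N)}) - W_N(n\sigma^2_{f^{(N)}})\bigr| \le C_N(\omega)\, n^{\lambda_0}$, valid only for $n$ large depending on $N$ and $\omega$. When you then set $N=N_n=n^{b}$, the sums $\sum_{k<n} f^{(N_n)}\circ T^k$ are no longer partial sums of one stationary process: each $n$ invokes a different ASIP, with a different Brownian motion, a different variance, a different random constant and a different exceptional null set. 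Nothing in the argument produces the single Wiener process $W$ demanded by the conclusion, nor controls $|W(n\sigma^2_{f^{(N_n)}})-W(n\sigma^2_f)|$, nor makes the constants $C_{N_n}(\omega)$ (which you concede grow with $N$) uniform along the diagonal. This is precisely where truncation arguments that suffice for the CLT fail for the ASIP, and it cannot be repaired by tuning $b$; one needs either a genuine triangular-array strong approximation or a criterion in which the approximation of $f$ by finitary functions is fed into the blocking construction itself. A secondary issue: your large-$k$ covariance estimate for $r^{(N)}$ ``invokes \eqref{cor 1m}'', but \eqref{cor 1m} controls only $\Cov(f, f\circ T^k)$, and the cross terms such as $\Cov(f^{(N)}, f\circ T^k)$ are not covered by Theorem~\ref{thm: mixing}, whose second argument must lie in $L^\infty(m)$.

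The paper follows the second of the two routes above: it verifies the four hypotheses of the Philipp--Stout criterion (Proposition~\ref{prop: ASIP}). The role of your truncation is played there by Condition (2), the quantitative approximation $\| f-\IE(f|\xi_m)\|_{L^{2+\eps}(\mu)}=\cO\bigl(m^{-(2+7/\eps)}\bigr)$, which is checked from the adapted structure and the fast-tail hypothesis \eqref{fast tail} (this is where $a>\max\{\frac{11}{2},\frac{2+3t}{1-2t}\}$ enters); the single Brownian motion is then constructed once, inside the Philipp--Stout blocking argument. Note also that the exponent $\frac{16}{15}$ is not a threshold for a maximal inequality: it is used to verify Condition (3), namely $\Var\bigl(\sum_{k=0}^{n-1}f\circ T^k\bigr)=n\sigma_f^2+\cO(n^{1-\eps/30})$, since $\frac{16}{15}=1+\frac{2}{30}$ matches the worst case $\eps=2$. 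If you wish to complete the proof, redirect your effort to verifying these four conditions (Condition (4) follows from Theorem~\ref{thm: mixing} applied to $\bbone_A+1$ for $A\in\sigma(\xi_1)$) rather than gluing ASIPs for varying truncations.
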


\begin{remark}
Condition \eqref{cor 1m} implies that the ASIP might hold for unbounded functions with
fairly  slow decay rates (in fact, polynomial decay) of auto-correlations.
We remark that the exponent $-\frac{16}{15}$ in \eqref{cor 1m}
is due to a classical result on invariance principle by Philipp and
Stout in \cite{MR0433597} (see Proposition~\ref{prop: ASIP} in Section~\ref{Sec: Proof of ASIP}).
Of course, we may improve this exponent by using some recent results on ASIP in probability theory,
but we shall not pursue it in this paper.
We shall provide an example in Section~\ref{sec: unbounded} on how to check Condition \eqref{cor 1m}.

By Theorem~\ref{thm: mixing}, Condition \eqref{cor 1m}
is automatic for any function $f\in \cH^{ad}_{\cW, \bgamma, 0}\subset L^\infty(m)$.
\end{remark}

\section{Quantitative Estimates on Standard Families}\label{sec:prelim}

In this section, we establish quantitative estimates on the density function and the average length (in terms of growth lemmas for the Characteristic $\cZ$ functions) for a standard family under iterates. These estimates will be the basis for understanding our coupling algorithm afterwards.

%In this section, we introduce three basic operations on standard families:
%iterates, cuttings and splittings,
%and provide some preliminary estimates on the density functions and the average length after these operations.
%We also provide a quantitative estimate for the covering ratio over the magnet.

\subsection{Estimates for the density function on standard pairs}
We first provide the bounds for the density function of a standard pair, that is:

%The concept of standard pairs is given by Definition \ref{def: sp},
%from which the lemma below follows.

\begin{lemma}\label{lem: sp property}
If $(W, \nu)$ is a standard pair with the density function $\rho$, then
\beqn
e^{-C_\br}\le \frac{\rho(x)}{|W|^{-1} } \le  e^{C_\br},
\ \ \text{for any}\ x\in W.
\eeqn
Moreover, for any $x, y\in W$,
\beqn
\left|\rho(x)^{\pm 1} -\rho(y)^{\pm 1}\right|
\le \frac{C_\br e^{C_\br}}{|W|}  \bgamma^{\bs(x,y)}.
\eeqn
\end{lemma}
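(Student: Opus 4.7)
The plan is to combine two ingredients: the regularity bound $|\log\rho|_{W,\bgamma}\le C_\br$ from the standard-pair definition, and the probability normalization $\int_W \rho\,dm = 1$. The first gives multiplicative control of $\rho$ across $W$ via the exponential, and the second pins down the correct order of magnitude, namely $1/|W|$.

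First I would note that for any $x,y\in W\setminus\cS_\infty$, the separation time $\bs(x,y)\ge 1$, so $\bgamma^{\bs(x,y)}\le \bgamma<1$ and hence $|\log\rho(x)-\log\rho(y)|\le C_\br$. Exponentiating yields
\beqn
e^{-C_\br}\ \le\ \frac{\rho(x)}{\rho(y)}\ \le\ e^{C_\br}\qquad\text{for all } x,y\in W.
\eeqn
Combined with $\int_W\rho\,dm=1$, which forces $\inf_W\rho\le |W|^{-1}\le \sup_W\rho$, this ratio bound immediately upgrades to the two-sided estimate $e^{-C_\br}|W|^{-1}\le \rho(x)\le e^{C_\br}|W|^{-1}$, proving the first claim. (A measure-zero adjustment on $\cS_\infty$ can be made by setting $\rho$ to any convenient value there, since the $\cH_{W,\bgamma}$ seminorm is only required off $\cS_\infty$.)

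For the second claim, I would use the elementary inequality $|e^a-e^b|\le e^{\max(a,b)}|a-b|$ applied to $a=\pm\log\rho(x)$, $b=\pm\log\rho(y)$, which gives
\beqn
\left|\rho(x)^{\pm 1}-\rho(y)^{\pm 1}\right|\ \le\ \max\!\left(\rho(x)^{\pm 1},\rho(y)^{\pm 1}\right)\cdot\left|\log\rho(x)-\log\rho(y)\right|.
\eeqn
Plugging in the seminorm bound $|\log\rho(x)-\log\rho(y)|\le C_\br\bgamma^{\bs(x,y)}$ and the uniform pointwise bound $\rho^{\pm 1}\le e^{C_\br}|W|^{\mp 1}$ from the first part finishes the job, with the constant $C_\br e^{C_\br}$ as stated.

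There is no real obstacle here: once one observes that the $\bs(x,y)\ge 1$ constraint makes the oscillation of $\log\rho$ uniformly at most $C_\br$, everything reduces to calculus with the exponential. The only minor care is to treat the set $\cS_\infty$ (where $\bs$ is undefined) as a null set on which $\rho$ may be freely redefined.
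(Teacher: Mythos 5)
Your proposal is correct and follows essentially the same route as the paper: the pointwise ratio bound $e^{-C_\br}\le\rho(x)/\rho(y)\le e^{C_\br}$ combined with the normalization $\int_W\rho\,dm=1$ for the first claim, and a mean-value estimate for the exponential for the second (the paper bounds $\sup|e^u|$ over $|u|\le C_\br-\log|W|$ by $e^{C_\br}/|W|$, which is the same device as your $e^{\max(a,b)}$ factor, using $|W|\le 1$ to absorb the $\rho^{-1}$ case). No gaps.
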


\begin{proof} By Definition \ref{def: sp} of a standard pair $(W,\nu)$, we have that for any $x, y\in W$,
\beqn
\rho(x) \le \rho(y) e^{C_\br \bgamma^{\bs(x,y)}} \le \rho(y) e^{C_\br}.
\eeqn
Taking integral over $W$ with respect to $dm(y)$ on both sides, we obtain
that $|W|\rho(x) \le e^{C_\br}$.
The proof for the other direction is similar.

Regarding the second assertion,
for any $z, w\in \IR$ with $|z|, |w|\le C_\br- \log |W|$,
\beqn
|e^z-e^w|\le |z-w| \sup_{|u|\le C_\br- \log |W|} |e^u| \le \frac{e^{C_\br}}{|W|}|z-w|,
\eeqn
and hence
{\allowdisplaybreaks
\beqyn
\left|\rho(x)^{\pm 1} -\rho(y)^{\pm 1}\right| = \left|e^{\pm \log \rho(x)} - e^{\pm \log \rho(y)}\right|
&\le & \frac{e^{C_\br}}{|W|}  \left|\log \rho(x) - \log \rho(y)\right| \\
&\le & \frac{C_\br e^{C_\br}}{|W|} \bgamma^{\bs(x,y)}.
\eeqyn
}This completes the proof of the lemma.
\end{proof}

The next lemma concerns the mergence of standard pairs over the same interval.

\begin{lemma}\label{lem: mergence}
Let $\{(W, \nu_\alpha)\}_{\alpha\in \cA}$ be
a countable collection of standard pairs.
For any non-negative weights $\lambda_\alpha$ on the index set $\cA$
such that $\sum_{\alpha\in \cA} \lambda_\alpha=1$,
then the mergence pair $(W, \nu)$ is also a standard pair,
where $\nu=\sum_{\alpha\in \cA} \lambda_\alpha \nu_\alpha$.
\end{lemma}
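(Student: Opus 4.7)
The plan is straightforward: reduce the claim to checking the two defining clauses of a standard pair (Definition~\ref{def: sp}), namely that $\nu$ is an absolutely continuous probability measure supported on $W$, and that the density $\rho:=d\nu/dm$ satisfies $\log\rho \in \cH_{W,\bgamma}$ with $|\log\rho|_{W,\bgamma}\le C_\br$.

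The first clause is essentially bookkeeping. Each $\nu_\alpha$ is an absolutely continuous probability measure supported on $W$, so its density $\rho_\alpha:=d\nu_\alpha/dm$ is an $L^1(m)$ function vanishing outside $W$. Hence $\rho:=\sum_{\alpha\in\cA}\lambda_\alpha \rho_\alpha$ is well-defined in $L^1(m)$ (the series converges in $L^1(m)$ since $\sum_\alpha \lambda_\alpha\|\rho_\alpha\|_{L^1}=\sum_\alpha\lambda_\alpha=1$), supported on $W$, and $\int \rho\, dm=\sum_\alpha \lambda_\alpha=1$, and by monotone convergence $\rho=d\nu/dm$.

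The key point is the regularity estimate. Fix any $x,y\in W\setminus\cS_\infty$. Since each $(W,\nu_\alpha)$ is standard, Definition~\ref{def: sp} gives
\beqn
|\log\rho_\alpha(x)-\log\rho_\alpha(y)|\le C_\br\,\bgamma^{\bs(x,y)},
\eeqn
equivalently $\rho_\alpha(x)\le e^{C_\br\bgamma^{\bs(x,y)}}\rho_\alpha(y)$, uniformly in $\alpha$. Multiplying by $\lambda_\alpha\ge 0$ and summing over $\alpha$ preserves the inequality (a standard fact: if $a_\alpha\le K\, b_\alpha$ with $b_\alpha\ge 0$, then $\sum a_\alpha\le K\sum b_\alpha$), which yields
\beqn
\rho(x)=\sum_{\alpha}\lambda_\alpha\rho_\alpha(x)\le e^{C_\br\bgamma^{\bs(x,y)}}\sum_\alpha\lambda_\alpha\rho_\alpha(y)=e^{C_\br\bgamma^{\bs(x,y)}}\rho(y).
\eeqn
Interchanging the roles of $x$ and $y$ gives the reverse inequality, so
\beqn
|\log\rho(x)-\log\rho(y)|\le C_\br\,\bgamma^{\bs(x,y)},
\eeqn
which is precisely $|\log\rho|_{W,\bgamma}\le C_\br$. (Positivity of $\rho$ needed to take logarithms follows from Lemma~\ref{lem: sp property} applied to each $\rho_\alpha$, since each $\rho_\alpha\ge e^{-C_\br}|W|^{-1}>0$ on $W$, so the convex combination $\rho$ is likewise bounded below by $e^{-C_\br}|W|^{-1}$ on $W$.)

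There is no real obstacle here; the argument is entirely pointwise and uses only that the estimate on log-ratios is preserved by convex combinations. The only point that deserves a line of care is ensuring the positivity of $\rho$ before invoking the logarithm, which is immediate from the uniform lower bound in Lemma~\ref{lem: sp property}. Once these two ingredients are in place, the conclusion that $(W,\nu)$ is a standard pair follows directly from Definition~\ref{def: sp}.
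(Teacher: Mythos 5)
Your proof is correct and follows essentially the same argument as the paper's: both establish the density $\rho=\sum_\alpha\lambda_\alpha\rho_\alpha$ and observe that the two-sided bound $e^{-C_\br\bgamma^{\bs(x,y)}}\le\rho_\alpha(x)/\rho_\alpha(y)\le e^{C_\br\bgamma^{\bs(x,y)}}$ is preserved under convex combinations. Your added remarks on $L^1$ convergence and positivity are fine but not needed beyond what the paper records.
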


\begin{proof}
Let $\rho_\alpha$ be the density of $\nu_\alpha$, then
the density of mergence pair is given by $\rho=\sum_{\alpha\in \cA} \lambda_\alpha \rho_\alpha$.
By Definition of standard pairs, for any $x, y\in W$, we have
\beqn
e^{-C_\br \bgamma^{\bs(x,y)}}\le \frac{\rho_\alpha(x)}{\rho_\alpha(y)}\le  e^{C_\br \bgamma^{\bs(x,y)}}
\eeqn
and thus
\beqn
e^{-C_\br \bgamma^{\bs(x,y)}}\le \frac{\rho(x)}{\rho(y)}
=\frac{\sum_{\alpha\in \cA} \lambda_\alpha \rho_\alpha(x)}{\sum_{\alpha\in \cA} \lambda_\alpha \rho_\alpha(y)}
\le  e^{C_\br \bgamma^{\bs(x,y)}},
\eeqn
which immediately implies that $\left|\log \rho\right|_{W, \bgamma}\le C_\br$. So
the mergence pair $(W, \nu)$ is a standard pair.
\end{proof}

\subsection{Iterates of standard families}
\begin{definition}[Iterates of families]%\label{def: iterates of sf}
For any integer $n\ge 0$ and any pair $(W, \nu)$,
let $\{W_\alpha\}_{\alpha\in W/\xi_n}$ be the relative partition of $W$ given by $\xi_n$,
and set $\nu_\alpha(\cdot):=\nu(T^{-n}(\cdot) |W_\alpha)$.
We define
\beqn
T^n(W, \nu)=\sum_{\alpha\in W/\xi_n} \nu(W_\alpha) \cdot (T^nW_\alpha, \nu_\alpha).
\eeqn
In general, for a family $\cG=\sum_{\beta\in \cA} \lambda_\beta   (W_{\beta}, \nu_\beta)$,
we define
\beqn
T^n \cG=\sum_{\beta\in \cA} \lambda_\beta \   T^n(W_{\beta}, \nu_\beta).
\eeqn
\end{definition}

\begin{lemma}\label{lem: forward iterates of sf}
If $\cG$ is a standard family, then $T^n\cG$ is also a standard family for any $n\ge 1$.
\end{lemma}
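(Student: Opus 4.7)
The plan is to reduce to the one-step case $n=1$ and then handle a single standard pair. Note that for any family $\cG = \sum_\beta \lambda_\beta (W_\beta, \nu_\beta)$, we have $T^n\cG = \sum_\beta \lambda_\beta T^n(W_\beta, \nu_\beta)$, so it suffices to show that $T^n(W, \nu)$ is a standard family whenever $(W, \nu)$ is a standard pair. Moreover, since $\xi_{n+1} = \xi_1 \vee T^{-1}\xi_n$, the decomposition rule gives $T^{n+1}(W, \nu) = T\bigl(T^n(W, \nu)\bigr)$, so an induction reduces matters to the case $n=1$. For the weights, note that $\sum_{\alpha \in W/\xi_1} \nu(W_\alpha) = \nu(W) = 1$, so the total weight of $T(W, \nu)$ is $1$, and passing to a countable sum with $\sum_\beta \lambda_\beta = 1$ preserves this.

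The core step is then the following: for each $\alpha \in W/\xi_1$, check that the pushforward pair $(TW_\alpha, \nu_\alpha)$ is a standard pair, where $\nu_\alpha(\cdot) = \nu(T^{-1}(\cdot) \mid W_\alpha)$. I would write the density of $\nu_\alpha$ on $TW_\alpha$ as
\[
\rho_\alpha(y) = \frac{1}{\nu(W_\alpha)} \cdot \frac{\rho(T^{-1}y)}{|T'(T^{-1}y)|},
\]
where the inverse branch is the one onto $W_\alpha$. The normalizing constant $\nu(W_\alpha)$ is irrelevant for the semi-norm $|\log \rho_\alpha|_{TW_\alpha, \bgamma}$, so I only need to control
\[
\log \rho(T^{-1}y) \;-\; \log |T'(T^{-1}y)|.
\]

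For the bookkeeping of separation times, I would use the key identity that whenever $x, x'$ lie in a common element of $\xi_1$ (which is our situation since $x, x' \in W_\alpha$ and $W_\alpha$ sits inside an element of $\xi_1$), one has $\bs(Tx, Tx') = \bs(x, x') - 1$. Combining this with the standard-pair bound $|\log \rho(x) - \log \rho(x')| \le C_\br \bgamma^{\bs(x,x')}$ and Assumption \textbf{(H2)} $|\log|T'(x)| - \log|T'(x')|| \le C_\bJ \gamma_\bJ^{\bs(x,x')} \le C_\bJ \bgamma^{\bs(x,x')}$ (using $\gamma_\bJ \le \bgamma$), I obtain
\[
|\log \rho_\alpha(y) - \log \rho_\alpha(y')| \le (C_\br + C_\bJ)\, \bgamma \cdot \bgamma^{\bs(y,y')}.
\]

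The main (and only) delicate point is then verifying that $(C_\br + C_\bJ)\bgamma \le C_\br$, i.e.\ that the semi-norm does not deteriorate under one application of $T$. This is precisely where the quantitative choice \eqref{regular constant}, namely $C_\br \ge 2C_\bJ/(\bgamma^{-1} - 1)$, is used: rearranging gives $C_\bJ \le C_\br(1-\bgamma)/(2\bgamma)$, from which
\[
(C_\br + C_\bJ)\bgamma \le \bgamma C_\br + \tfrac{1-\bgamma}{2} C_\br = \tfrac{1+\bgamma}{2}\, C_\br < C_\br.
\]
Hence $|\log \rho_\alpha|_{TW_\alpha, \bgamma} \le C_\br$, establishing that $(TW_\alpha, \nu_\alpha)$ is a standard pair and completing the argument. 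The calibration of the constants in \eqref{regular constant} is the whole point; once that algebra is in place the proof is essentially automatic.
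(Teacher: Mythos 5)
Your proposal is correct and follows essentially the same route as the paper: reduce to a single standard pair and one iterate, use the density formula $\rho_\alpha = \rho(x_\alpha)/(\nu(W_\alpha)|T'(x_\alpha)|)$, the separation-time shift $\bs(x_\alpha,y_\alpha)=\bs(x,y)+1$, and the calibration $(C_\br+C_\bJ)\bgamma\le \frac{1+\bgamma}{2}C_\br\le C_\br$ coming from \eqref{regular constant}. The only cosmetic difference is that you make the constant algebra explicit where the paper leaves it implicit in the choice of $C_\br$.
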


\begin{proof}
It suffices to show that for any standard pair $\cG=(W, \nu)$ with density $\rho=\frac{d\nu}{dm}$,
the first iterate
\beqn
T\cG=\sum_{\alpha\in W/\xi_1} \nu(W_\alpha) \cdot (TW_\alpha, \nu_\alpha)
\eeqn
is a standard family,
where $\{W_\alpha\}_{\alpha\in W/\xi_1}$ is the relative partition of $W$ given by $\xi_1$,
and $\nu_\alpha(\cdot)=\nu(T^{-1} (\cdot) |W_\alpha)$.
It is clear that
$
\sum_{\alpha\in W/\xi_1} \nu(W_\alpha)=\nu(W)=1,
$
and it remains to show that each $(TW_\alpha, \nu_\alpha)$ is a standard pair.
Indeed, for any Borel subset $A\subset TW_\alpha$,
\beqn
\frac{\nu_\alpha(A)}{m(A)}=\frac{\nu(T^{-1}A|W_\alpha)}{m(A)}=
\frac{1}{\nu(W_\alpha)} \frac{\nu(T^{-1}A\cap W_\alpha)}{m(T^{-1}A \cap W_\alpha)}
\frac{m(T^{-1}A\cap W_\alpha)}{m(A)}.
\eeqn
\begin{comment}
\textcolor[rgb]{0.00,0.00,1.00}{\beqn
\frac{T_*\nu_\alpha(A)}{m(A)}=\frac{\nu(T^{-1}A|W_\alpha)}{m(A)}=
\frac{1}{\nu(W_\alpha)} \frac{\nu(T^{-1}A\cap W_\alpha)}{m(T^{-1}A \cap W_\alpha)}
\frac{m(T^{-1}A\cap W_\alpha)}{m(A)}.
\eeqn}
\end{comment}
Since $T|_{W_\alpha}: W_\alpha\to TW_\alpha$ is invertible,
we denote $x_\alpha=(T|_{W_\alpha})^{-1}(x)$ for any $x\in TW_\alpha$.
Then the density function $\rho_\alpha:=\frac{d\nu_\alpha}{dm}$ is given by
\beq\label{density formula}
\rho_\alpha(x)=\frac{1}{\nu(W_\alpha)}\frac{\rho(x_\alpha)}{|T'(x_\alpha)|}.
\eeq
%\textcolor[rgb]{1.00,0.00,0.00}{HZ: sorry, what is $\rho$?}

For any $x, y\in TW_\alpha$, by Assumption (\textbf{H2}) and
the choice of $\bgamma$ and $C_\br$ given by \eqref{regular constant},
{\allowdisplaybreaks
\beqyn
& & |\log \rho_\alpha(x) - \log \rho_\alpha(y)| \\
&\le & |\log \rho(x_\alpha) - \log \rho(y_\alpha)| + |\log |T'(x_\alpha)| - \log |T'(y_\alpha)|| \\
&\le & C_\br \bgamma^{\bs(x_\alpha, y_\alpha)} + C_\bJ \bgamma_{\bJ}^{\bs(x_\alpha, y_\alpha)}
\le (C_\br+C_\bJ)  \bgamma^{\bs(x, y)+1}  \le C_\br \bgamma^{\bs(x, y)}.
\eeqyn
}Hence the density $\rho_\alpha$ is regular on $TW_\alpha$.
This completes the proof of the lemma.
\end{proof}

\begin{remark}\label{remark non-standard 1}
Along the same lines in the proof of Lemma~\ref{lem: forward iterates of sf},
we can show that
$T\cG$ is a standard family if the family
$\cG=\sum_{\beta\in \cB} \lambda_\beta (W_{\beta}, \nu_\beta)$ is
a convex sum of pairs with densities $\rho_\beta=d\nu_\beta/dm$ satisfies that
\beq\label{non-standard 1}
 |\log \rho_\beta|_{W_\beta, \bgamma} \le \frac{1+\bgamma}{2\bgamma} C_\br.
\eeq

\end{remark}

\subsection{Cuttings of standard families}
%At the coupling stage, we need to cut a family into an equivalent one.
\begin{definition}[Cut family]
Let $(W, \nu)$ be a pair, and $W$ is cut into
countable sub-intervals $\{W_i\}_{i\ge 1}$.
The cut family of $(W, \nu)$ is defined as
\beqn
(W, \nu)'= \sum_{i=1}^\infty
\nu(W_i)\cdot (W_i, \nu(\cdot|W_i)).
\eeqn
In general, let $\cG=\sum_{\alpha\in \cA} \lambda_\alpha (W_{\alpha}, \nu_\alpha)$ be a family.
Given an index subset $\cA'\subset \cA$ and a set $\cC$ of countable points in $M$,
we define the cut family $\cG'$ from $\cG$ with only pairs in $\cA'$ being cut by points in $\cC$,
that is,
\beqn
\cG'=\sum_{\alpha\in \cA} \lambda_\alpha (W_{\alpha}, \nu_\alpha)'.
\eeqn
We shall simply say $\cG'$ is a cut family from $\cG$ if there is no need to mention $\cA'$ and $\cC$. %\textcolor[rgb]{0.00,0.00,1.00}{(HZ: But this cut family depend on $\cC$, we should just define the cut family corresponding to $U$ to avoid complications. If we make the paper very complicated to read, they can not find referees who would like to refer the paper.)}
\end{definition}

It is easy to see that if $\cG$ is a standard family,
then any cut family $\cG'$ from $\cG$ is also a standard family.
Also, the cutting operation preserves the total measure,
while it does decrease the average length but not that much.
We recall that the average length of a family is represented by
the characteristic function $\cZ(\cdot)$ given by \eqref{cZ}.

\begin{lemma}\label{lem: cut length}
Let $\cG'$ be a cut family from a standard family $\cG$ by $k$ points,
\beqn
\cZ(\cG)\le \cZ(\cG')\le (k+1)e^{C_\br} \cZ(\cG).
\eeqn
\end{lemma}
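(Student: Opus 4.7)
The plan is to decouple the analysis over the original pairs of $\cG$. Writing $\cG=\sum_{\alpha\in\cA}\lambda_\alpha (W_\alpha,\nu_\alpha)$ and letting $k_\alpha$ be the number of cut points falling into $W_\alpha$ (so $\sum_\alpha k_\alpha \le k$), the cut family $\cG'$ splits each $(W_\alpha,\nu_\alpha)$ into at most $k_\alpha+1$ sub-pairs $(W_{\alpha,i},\nu_\alpha(\cdot|W_{\alpha,i}))$ with weights $\lambda_\alpha\nu_\alpha(W_{\alpha,i})$. Since $\sum_i\nu_\alpha(W_{\alpha,i})=1$, the total weight at each $\alpha$ is preserved, and it suffices to compare, separately for each $\alpha$, the single term $|W_\alpha|^{-q_0}$ against the sum $\sum_i\nu_\alpha(W_{\alpha,i})|W_{\alpha,i}|^{-q_0}$.

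The lower bound $\cZ(\cG)\le\cZ(\cG')$ is immediate: because $|W_{\alpha,i}|\le|W_\alpha|$ and $q_0>0$, we have $|W_{\alpha,i}|^{-q_0}\ge|W_\alpha|^{-q_0}$, and summing against the probability weights $\nu_\alpha(W_{\alpha,i})$ on $i$ yields at least $|W_\alpha|^{-q_0}$. Multiplying by $\lambda_\alpha$ and summing over $\alpha$ gives the inequality.

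For the upper bound, I would invoke Lemma~\ref{lem: sp property} applied to the standard pair $(W_\alpha,\nu_\alpha)$, which gives the density estimate $\nu_\alpha(W_{\alpha,i})\le e^{C_\br}|W_{\alpha,i}|/|W_\alpha|$. Plugging this in,
\[
\sum_i\nu_\alpha(W_{\alpha,i})|W_{\alpha,i}|^{-q_0}\le \frac{e^{C_\br}}{|W_\alpha|}\sum_i|W_{\alpha,i}|^{1-q_0}.
\]
Since $q_0\in(0,1]$, H\"older's inequality (equivalently, concavity of $x\mapsto x^{1-q_0}$) yields
\[
\sum_i|W_{\alpha,i}|^{1-q_0}\le (k_\alpha+1)^{q_0}\Bigl(\sum_i|W_{\alpha,i}|\Bigr)^{1-q_0}=(k_\alpha+1)^{q_0}|W_\alpha|^{1-q_0}.
\]
Combining, the per-$\alpha$ contribution in $\cZ(\cG')$ is at most $(k_\alpha+1)^{q_0}e^{C_\br}|W_\alpha|^{-q_0}\le (k+1)e^{C_\br}|W_\alpha|^{-q_0}$, using $(k_\alpha+1)^{q_0}\le k_\alpha+1\le k+1$. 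Summing over $\alpha$ against $\lambda_\alpha$ yields $\cZ(\cG')\le(k+1)e^{C_\br}\cZ(\cG)$.

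There is no real obstacle here; the only mild subtlety is the uniform bookkeeping on the number of cuts per original interval, which is handled by the crude bound $k_\alpha\le k$ at the cost of a single factor $(k+1)$. A slightly sharper bound could be obtained by keeping $(k_\alpha+1)^{q_0}$ inside the sum over $\alpha$, but this is not needed for the coupling argument.
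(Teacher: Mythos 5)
Your proposal is correct and follows essentially the same route as the paper: reduce to a single standard pair, get the lower bound from $|W_{\alpha,i}|\le|W_\alpha|$, and get the upper bound from the density estimate $\nu_\alpha(W_{\alpha,i})\le e^{C_\br}|W_{\alpha,i}|/|W_\alpha|$ of Lemma~\ref{lem: sp property} followed by bounding the sum of $(1-q_0)$-powers by the number of pieces. The only cosmetic difference is that you pass through H\"older to get the sharper factor $(k_\alpha+1)^{q_0}$ before relaxing it to $k+1$, whereas the paper simply bounds each term $(|W_i|/|W|)^{1-q_0}$ by $1$.
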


\begin{proof} It suffices to show for a standard pair $\cG=(W, \nu)$, which is cut into
$(k+1)$ sub-intervals $W_1, W_2, \dots, W_{k+1}$. Then
\beqn
\cZ(\cG')=\sum_{1\le i\le k+1}\frac{\nu(W_i)}{|W_i|^{q_0}}
\ge \frac{\sum_{1\le i\le k+1} \nu(W_i)}{|W|^{q_0}}=\frac{1}{|W|^{q_0}}=\cZ(\cG).
\eeqn
On the other hand, by Lemma~\ref{lem: sp property},
\beqn
\nu(W_i)=\int_{W_i} \rho(x) dm(x)\le e^{C_\br} \frac{|W_i|}{|W|},
\eeqn
and thus,
{\allowdisplaybreaks
\begin{eqnarray*}
\cZ(\cG')=\sum_{1\le i\le k+1}\frac{\nu(W_i)}{|W_i|^{q_0}}
\le \sum_{1\le i\le k+1}  \frac{e^{C_\br} \frac{|W_i|}{|W|} }{|W_i|^{q_0}}
&=&\frac{e^{C_\br}}{|W|^{q_0}} \sum_{1\le i\le k+1} \left(\frac{|W_i|}{|W|}\right)^{1-q_0} \\
&\le & (k+1)e^{C_\br} \cZ(\cG).
\end{eqnarray*}
}This completes the proof of this lemma.
\end{proof}
\begin{remark}
It is not hard to check that if a family $\cG$ is a convex sum of countably many families,
say, $\cG=\sum_i \lambda_i \cG_i$, then
\beq\label{cZ decomposition}
\cZ(T^n \cG)=\sum_{i} \lambda_i \cZ(T^n \cG_i), \ \ \text{for any}\ n\ge 0.
\eeq
This together with Lemma~\ref{lem: cut length} implies that if $\cG'$ is a cut family from a standard family $\cG$, then
\beq\label{cZ cut}
\cZ(T^n \cG)\le \cZ(T^n \cG'), \ \ \text{for any}\ n\ge 0.
\eeq
\end{remark}

\subsection{Growth lemmas}%\label{sec: growth}

%We first recall the definition of the characteristic function $\cZ(\cdot)$ given by \eqref{cZ}.

We establish the \emph{growth lemma} in this section. Roughly speaking, it means the value of $\cZ(T^n\cG)$ decreases exponentially in $n$ until it becomes small enough, providing that the initial standard family $\cG$ belongs to $\fF$, i.e.,
$\cZ(\cG)<\infty$. This fundamental property
was first introduced and proved by Chernov for dispersing billiards in \cite{MR1675363},
and later generalized by Chernov and Zhang in \cite{MR2540156}.

%Recall that $\fF$ is the collection of all families $\cG$ with $\cZ(\cG)<\infty$.
%It turns out that for any standard family $\cG\in \fF$,
%the value of $\cZ(T^n\cG)$ decreases exponentially in $n$ until it becomes small enough.
%This fundamental fact, called the \emph{growth lemma},
%was first proved by Chernov for dispersing billiards in \cite{MR1675363},
%and later generalized by Chernov and Zhang in \cite{MR2540156} .

To begin with, we first state the growth lemma for the Lebesgue standard pairs.

\begin{lemma}\label{lem: growth Leb}
Let $\theta_0, \delta_0$ and $q_0$ be the constants given in \eqref{def theta_0}.
For any Lebesgue standard pair $(W, m_W)$ and any $n\ge 1$, we have
\beq\label{eq growth Leb}
\cZ(T^n (W, m_W) )\leq \theta_0^{n} \cZ((W, m_W))+2 \delta_0^{-q_0}(\theta_0+\dots+\theta_0^n).
\eeq
\end{lemma}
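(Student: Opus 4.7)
The plan is to argue by induction on $n$. The base case $n=1$ reduces directly to Chernov's one-step expansion \eqref{def theta_0} after handling the possibility $|W| \ge \delta_0$ by a cutting argument, and the inductive step exploits the cocycle structure $T^{n+1} = T^n\circ T$ together with the observation that for each $\alpha \in W/\xi_1$, $(TW_\alpha, m_{TW_\alpha})$ is itself a Lebesgue standard pair to which the inductive hypothesis directly applies.

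For the base case, I would prove $\cZ(T(W,m_W)) \le \theta_0|W|^{-q_0} + 2\theta_0\delta_0^{-q_0}$ via a dichotomy on $|W|$. If $|W| < \delta_0$, then by \eqref{def theta_0},
\[
\cZ(T(W,m_W)) = \sum_{\alpha \in W/\xi_1} \frac{|W_\alpha|}{|W|} |TW_\alpha|^{-q_0} \le \theta_0 |W|^{-q_0}.
\]
If $|W| \ge \delta_0$, I would partition $W$ into $k = \lceil 2|W|/\delta_0 \rceil$ equal pieces $\{U_j\}$ of length $|U_j| = |W|/k < \delta_0$, write $(W,m_W) = \sum_j (|U_j|/|W|)(U_j, m_{U_j})$ as a convex combination, and invoke \eqref{cZ decomposition} together with the cutting observation \eqref{cZ cut} to conclude $\cZ(T(W,m_W)) \le \sum_j(|U_j|/|W|)\cZ(T(U_j,m_{U_j})) \le \theta_0 k^{q_0}|W|^{-q_0}$. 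Since $q_0 \in (0,1]$, the subadditivity $(a+b)^{q_0} \le a^{q_0}+b^{q_0}$ gives $k^{q_0}|W|^{-q_0} \le (2/\delta_0 + 1/|W|)^{q_0} \le 2\delta_0^{-q_0} + |W|^{-q_0}$, which closes the case.

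For the inductive step, writing $T^{n+1} = T^n\circ T$ and $T(W,m_W) = \sum_{\alpha\in W/\xi_1} (|W_\alpha|/|W|)(TW_\alpha,\nu_\alpha)$ with $\nu_\alpha = T_* m_{W_\alpha}$, I would apply the inductive hypothesis to the Lebesgue standard pair $(TW_\alpha, m_{TW_\alpha})$:
\[
\cZ(T^n(TW_\alpha, m_{TW_\alpha})) \le \theta_0^n |TW_\alpha|^{-q_0} + 2\delta_0^{-q_0}(\theta_0+\cdots+\theta_0^n).
\]
Next, I would compare the families $T^n(TW_\alpha,\nu_\alpha)$ and $T^n(TW_\alpha, m_{TW_\alpha})$ via the bounded distortion of $T|_{W_\alpha}$ coming from \textbf{(H2)}, and then sum over $\alpha$, applying the base-case argument \emph{once more} to the outer sum $\sum_\alpha(|W_\alpha|/|W|)|TW_\alpha|^{-q_0}$ to extract the additional factor $\theta_0$ (with the extra $2\theta_0\delta_0^{-q_0}$ error term if $|W| \ge \delta_0$). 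Collecting terms telescopes the geometric series $\theta_0 + \cdots + \theta_0^n$ into $\theta_0 + \cdots + \theta_0^{n+1}$, yielding the desired bound at level $n+1$.

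The main obstacle is controlling the distortion that appears when comparing $\nu_\alpha$ with the Lebesgue reference $m_{TW_\alpha}$: a pointwise application of \textbf{(H2)} naively introduces a multiplicative factor $e^{C_\bJ}$ at each inductive step, which would compound into $e^{nC_\bJ}$ and destroy the exponential decay. The resolution is that $T|_{W_\alpha}$ is applied only a \emph{single} time at each inductive stage (while the entire iterated distortion along $T^n$ has already been absorbed into the inductive hypothesis via the constant $C_\br$ calibrated in \eqref{regular constant}), so that bounded distortion need only be invoked once per induction step, and the surplus constant can be absorbed into the slack provided by choosing $q_0 < \bar q$ strictly in \eqref{def oq}, so that the effective contraction factor remains $\theta_0 < 1$.
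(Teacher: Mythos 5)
Your overall route is the same as the paper's: induction on $n$, with the base case handled by the one-step expansion \eqref{def theta_0} for $|W|<\delta_0$ and by cutting $W$ into subintervals of length less than $\delta_0$ otherwise, and the inductive step via the decomposition $T^{n+1}=T^n\circ T$ followed by a reassembly through the $n=1$ case. Your base case is correct: the choice $k=\lceil 2|W|/\delta_0\rceil$ together with the subadditivity of $x\mapsto x^{q_0}$ reproduces the bound $\cZ(T(W,m_W))\le \theta_0|W|^{-q_0}+2\theta_0\delta_0^{-q_0}$, which is what the paper obtains with its choice $k=\lfloor|W|/\delta_0\rfloor+1$.

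The inductive step, however, does not close as you describe it. After one application of $T$, the pair to which the inductive hypothesis must be applied is $(TW_\alpha,\nu_\alpha)$ with $\nu_\alpha=(T|_{W_\alpha})_*m_{W_\alpha}$, which is a Lebesgue pair only when $T$ is affine on $W_\alpha$; you propose to trade $\nu_\alpha$ for $m_{TW_\alpha}$ at the price of a distortion factor from \textbf{(H2)}. You correctly identify that this costs a fixed multiplicative factor $e^{cC_\bJ}$, $c>0$, at each stage, but your rescue does not work. ``Invoking bounded distortion only once per induction step'' is precisely what produces $e^{ncC_\bJ}$ once the induction is unrolled, since there are $n$ steps; the statement \eqref{eq growth Leb} contains no constant into which a per-step loss can be hidden, so a factor $\kappa>1$ per step converts $\theta_0^{n}$ into $(\kappa\theta_0)^{n}$, which is useless unless $\kappa\theta_0<1$; and the slack obtained from taking $q_0<\oq$ only guarantees $\theta_0<1$, a quantity with no relation whatsoever to $C_\bJ$, so $\kappa\theta_0<1$ cannot be arranged. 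The remark that ``the iterated distortion along $T^n$ has already been absorbed into the inductive hypothesis via $C_\br$'' is empty, since $C_\br$ does not occur in \eqref{eq growth Leb}. The paper's induction closes precisely because it never re-Lebesgue-izes at intermediate times: the inductive hypothesis is applied so that the outer sum reassembles exactly into $\theta_0^{n}\cZ(T(W,m_W))$ with no distortion factor appearing (this is exact for the piecewise-linear examples of Section 8; for general branches the clean repair is to carry the pushforward pairs themselves through the induction, whose log-densities remain uniformly regular with the single constant $C_\br$ of \eqref{regular constant} by the computation in Lemma~\ref{lem: forward iterates of sf}, so that the comparison with Lebesgue is paid once, as the factor $e^{2C_\br}$ in Lemma~\ref{lem: growth}, rather than once per step). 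You have put your finger on a genuinely delicate point, but your proposal does not resolve it, and as written it does not prove \eqref{eq growth Leb}.
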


\begin{proof} For any $n\ge 0$,
we denote $\{W_\alpha\}_{\alpha\in W/\xi_n}$ the relative partition of $W$ given by $\xi_n$,
then
\beqn
T^n(W, m_W)=\sum_{\alpha\in W/\xi_n} m_W(W_\alpha)
\cdot (T^nW_\alpha,\ T^n_*m_{W_{\alpha}}),
\eeqn
and thus
{\allowdisplaybreaks
\beqy\label{cZ Tn Leb}
\cZ(T^n(W, m_W))
&=&\sum_{\alpha\in W/\xi_n} m_W(W_\alpha) |T^nW_\alpha|^{-q_0} \nonumber \\
&=&\sum_{\alpha\in W/\xi_n} \frac{|W_\alpha|}{|W|} \frac{1}{|T^nW_\alpha|^{q_0}}.
\eeqy
}

We now prove \eqref{eq growth Leb} by making induction on $n$.
When $n=1$, if $|W|<\delta_0$, then by \eqref{def theta_0},
\beq\label{Leb iterate}
\cZ(T(W, m_W))
=\sum_{\alpha\in W/\xi_1} \frac{|W_\alpha|}{|W|} \frac{1}{|TW_\alpha|^{q_0}}
\le \theta_0 |W|^{-q_0} = \theta_0 \cZ((W, m_W)).
\eeq
Otherwise, if $|W|\ge \delta_0$, we divide $(W, m_W)$ into $k=\lfloor |W|/\delta_0 \rfloor+1$
pieces $\{(W_1, m_{W_1}), \dots, (W_k, m_{W_k})\}$
of equal length which belongs to $[\delta_0/2, \delta_0)$.
In other words, $(W, m_W)$ is cut into a sum of standard pairs $\{(W_i, m_{W_i})\}_{1\le i\le k}$
with equal weights $1/k$.
By \eqref{cZ cut} and \eqref{Leb iterate}, we have
{\allowdisplaybreaks
\beqyn
\cZ(T(W, m_W))
\le \sum_{i=1}^k \frac{1}{k}  \cZ(T(W_i, m_{W_i}))
&\le &\frac{1}{k} \sum_{i=1}^k \theta_0 \cZ((W_i, m_{W_i})) \\
&\le &\theta_0 \left(\frac{\delta_0}{2}\right)^{-q_0}
\le 2\theta_0 \delta_0^{-q_0}.
\eeqyn
}In either case, we obtain \eqref{eq growth Leb} for $n=1$.

\medskip

Suppose now \eqref{eq growth Leb} holds for some $n$.
By \eqref{cZ Tn Leb},
{\allowdisplaybreaks
\beqyn
\cZ(T^{n+1}(W, m_W))
&=&
\sum_{\alpha\in W/\xi_1} \sum_{\beta\in W_\alpha/\xi_n}
\frac{|W_{\alpha \beta}|}{|W|} \frac{1}{|T^{n+1}W_{\alpha\beta}|^{q_0}} \\
&=&
\sum_{\alpha\in W/\xi_1}  \frac{|W_{\alpha}|}{|W|}
\sum_{\beta\in W_\alpha/\xi_n}  \frac{|W_{\alpha \beta}|}{|W_\alpha|} \frac{1}{|T^{n} (TW_{\alpha\beta})|^{q_0}} \\
&=&
\sum_{\alpha\in W/\xi_1}  \frac{|W_{\alpha}|}{|W|} \ \cZ(T^n(W_\alpha, m_{W_{\alpha}})) \\
&\le &
\sum_{\alpha\in W/\xi_1}  \frac{|W_{\alpha}|}{|W|}
\left[ \theta_0^n \cZ((W_\alpha, m_{W_\alpha})) + 2\delta_0^{-q_0} (\theta_0+\dots+\theta_0^n)
\right]  \\
&=& \theta_0^n \cZ(T(W, m_W)) + 2\delta_0^{-q_0} (\theta_0+\dots+\theta_0^n) \\
&\le & \theta_0^n (\theta_0 \cZ((W, m_W)) + 2\theta_0\delta_0^{-q_0})
+ 2\delta_0^{-q_0} (\theta_0+\dots+\theta_0^n) \\
&=& \theta_0^{n+1} \cZ((W, m_W))+2 \delta_0^{-q_0}(\theta_0+\dots+\theta_0^{n+1}).
\eeqyn
} Therefore, \eqref{eq growth Leb} also holds for $(n+1)$.
Thus, we complete the proof of this lemma by induction.
\end{proof}

Next, we present the growth lemma for all standard families in $\fF$.

\begin{lemma}\label{lem: growth} Let $c_0$ be given in \eqref{choose Zp}.
For any standard family $\cG\in \fF$ and any $n\ge 0$,
\beq\label{eq growth}
\cZ(T^n \cG)\leq e^{2 C_\br} \left(\cZ(\cG) \theta_0^{n} +c_0\right).
\eeq
\end{lemma}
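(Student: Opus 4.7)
The plan is to reduce the general case to the Lebesgue standard pair case (Lemma~\ref{lem: growth Leb}) by exploiting the density bounds for standard pairs. The key observation is that the characteristic $\cZ$ function depends only on the weights of the components and the lengths of the intervals, not on the specific densities, so comparing a standard pair with its Lebesgue counterpart reduces to a density comparison at time $0$.

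First, I would reduce to the case of a single standard pair. Given $\cG = \sum_{\beta\in\cA}\lambda_\beta(W_\beta, \nu_\beta) \in \fF$, formula \eqref{cZ decomposition} gives
\beqn
\cZ(T^n \cG) = \sum_{\beta\in\cA} \lambda_\beta \cZ(T^n (W_\beta, \nu_\beta)),
\eeqn
so it suffices to establish \eqref{eq growth} for a single standard pair $(W, \nu)$ with $\cZ((W,\nu)) = |W|^{-q_0}$.

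Next, I would compare with the Lebesgue standard pair $(W, m_W)$. Writing the expansion
\beqn
T^n(W, \nu) = \sum_{\alpha\in W/\xi_n} \nu(W_\alpha) \cdot (T^n W_\alpha, \nu_\alpha),
\eeqn
the definition of $\cZ$ yields $\cZ(T^n(W,\nu)) = \sum_{\alpha\in W/\xi_n}\nu(W_\alpha)|T^n W_\alpha|^{-q_0}$, an expression depending on $\nu$ only through the weights $\nu(W_\alpha)$. By Lemma~\ref{lem: sp property}, the density $\rho = d\nu/dm$ satisfies $\rho \leq e^{C_\br}|W|^{-1}$, so
\beqn
\nu(W_\alpha) = \int_{W_\alpha}\rho\, dm \leq e^{C_\br}\frac{|W_\alpha|}{|W|} = e^{C_\br} m_W(W_\alpha),
\eeqn
for every $\alpha \in W/\xi_n$. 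Summing over $\alpha$ after multiplying by $|T^n W_\alpha|^{-q_0}$ gives
\beqn
\cZ(T^n(W, \nu)) \leq e^{C_\br}\,\cZ(T^n(W, m_W)).
\eeqn

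Then I would plug in Lemma~\ref{lem: growth Leb}: since $\theta_0 + \cdots + \theta_0^n \leq \theta_0/(1-\theta_0)$, and $c_0 \geq 2\theta_0\delta_0^{-q_0}/(1-\theta_0)$ by \eqref{choose Zp},
\beqn
\cZ(T^n(W, m_W)) \leq \theta_0^n |W|^{-q_0} + c_0 = \theta_0^n \cZ((W,\nu)) + c_0.
\eeqn
Combining this with the previous inequality yields $\cZ(T^n(W,\nu)) \leq e^{C_\br}\bigl(\theta_0^n\cZ((W,\nu)) + c_0\bigr)$, which is in fact stronger than the claimed bound (since $e^{C_\br}\leq e^{2C_\br}$). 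Summing with weights $\lambda_\beta$ as in the first step finishes the proof.

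The main obstacle is purely bookkeeping: one has to be careful that the comparison $\nu(W_\alpha) \leq e^{C_\br} m_W(W_\alpha)$ is applied componentwise so that the inequality survives under the weighted sum defining $\cZ$, and that \eqref{cZ decomposition} is invoked with the correct weights when lifting from a standard pair to a standard family. No delicate expansion-type estimate is needed at this stage, since all the hard work is already contained in Lemma~\ref{lem: growth Leb}; this lemma is essentially a ``density transfer'' corollary that trades a factor of $e^{C_\br}$ for the right to assume the initial measure is conditional Lebesgue.
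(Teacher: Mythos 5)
Your proposal is correct and follows essentially the same route as the paper's proof: reduce to a single standard pair via \eqref{cZ decomposition}, compare the weights $\nu(W_\alpha)$ with $m_W(W_\alpha)$ using the density bound of Lemma~\ref{lem: sp property}, and then invoke Lemma~\ref{lem: growth Leb}. Your one-sided comparison even yields the slightly sharper constant $e^{C_\br}$ in place of the paper's $e^{2C_\br}$ (the paper records the two-sided estimate \eqref{compare pairs}, which it reuses elsewhere).
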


\begin{proof}
By \eqref{cZ decomposition}, it is enough to prove \eqref{eq growth} for standard pairs.
Let $(W, \nu)$ be a standard pair with the density $\rho$.
For any $n\ge 0$,
we denote $\{W_\alpha\}_{\alpha\in W/\xi_n}$ the relative partition of $W$ given by $\xi_n$,
then the standard family $T^n(W, \nu)$ has weights $\nu(W_\alpha)$.
We consider the corresponding Lebesgue standard pair $(W, m_W)$, then
$T^n(W, m_W)$ has weights $m_W(W_\alpha)$. By Lemma \ref{lem: sp property},
\beq\label{Leb replace}
e^{-C_\br}\le \frac{\nu(W_\alpha)}{m_W(W_\alpha)}=
\frac{\int_{W_\alpha} \rho\ dm}{\int_{W_\alpha} |W|^{-1} dm}\le  e^{C_\br},
\eeq
which implies that
\beq\label{compare pairs}
e^{-C_\br}\le \frac{\cZ(T^n(W, \nu))}{\cZ(T^n(W, m_W))} \le e^{C_\br}.
\eeq
By Lemma \ref{lem: growth Leb} and the definition of $c_0$ in \eqref{choose Zp}, we have for any $n\ge 1$,
\beqn
\cZ(T^n(W, m_W)) \le \theta_0^n \cZ((W, m_W)) + c_0,
\eeqn
By \eqref{compare pairs},  \eqref{eq growth} holds for $\cG=(W, \nu)$.
\end{proof}

\begin{remark}\label{remark non-standard 2}
From the proofs of Lemma~\ref{lem: sp property} and Lemma~\ref{lem: growth},
we have that
\beqn
\cZ(T^n \cG)\leq e^{4 C_\br} \left(\cZ(\cG) \theta_0^{n} +c_0\right),
\eeqn
if the family $\cG=\sum_{\alpha\in \cA} \lambda_\alpha (W_{\alpha}, \nu_\alpha)\in \fF$ is a convex sum of pairs
with density $\rho_\alpha=d\nu_\alpha/dm$ satisfies that
\beq\label{non-standard 2}
|\log \rho_\alpha|_{W_\alpha, \bgamma} \le 2 C_\br.
\eeq
\end{remark}

\begin{lemma}\label{lem: growth proper}
For any standard family $\cG\in \fF$,
$T^n\cG$ is proper for any $n\ge n_\bp(\cG)$, where
\beqn %\label{choose npG}
n_\bp(\cG):=\left\lfloor\dfrac{- \log \cZ(\cG)}{\log \theta_0} \right\rfloor+1.
\eeqn
\end{lemma}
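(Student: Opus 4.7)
The plan is to deduce the claim directly from the growth lemma (Lemma~\ref{lem: growth}) together with the arithmetic choice of $n_\bp(\cG)$. Recall that Lemma~\ref{lem: growth} gives, for every standard family $\cG \in \fF$ and every $n \ge 0$, the bound
\[
\cZ(T^n \cG) \le e^{2C_\br}\bigl(\cZ(\cG)\,\theta_0^{n} + c_0\bigr).
\]
Since propriety means $\cZ(T^n\cG) \le C_\bp$ with $C_\bp \ge 10 c_0 e^{7C_\br}$, the only real work is to ensure the transient term $\cZ(\cG)\,\theta_0^{n}$ is absorbed into the constant $c_0$ once $n$ is large enough.

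First I would unpack the definition of $n_\bp(\cG)$. Because $\theta_0 \in (0,1)$ we have $\log \theta_0 < 0$, so the inequality
\[
n \ge n_\bp(\cG) = \left\lfloor\frac{-\log \cZ(\cG)}{\log \theta_0}\right\rfloor + 1
\]
implies $n > -\log \cZ(\cG)/\log \theta_0$, and multiplying both sides by the negative quantity $\log \theta_0$ reverses the inequality to give $n \log \theta_0 < -\log \cZ(\cG)$, i.e.\ $\cZ(\cG)\,\theta_0^{n} < 1$.

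Plugging this into the growth estimate yields
\[
\cZ(T^n \cG) \le e^{2C_\br}(1 + c_0) \le 2c_0\, e^{2C_\br},
\]
where the last step uses $c_0 \ge 1$ from \eqref{choose Zp}. Since $2c_0 e^{2C_\br} \le 10 c_0 e^{7C_\br} \le C_\bp$, we conclude $T^n \cG$ is proper.

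I do not foresee any real obstacle here: the statement is essentially a bookkeeping corollary of Lemma~\ref{lem: growth}. The one point that deserves care is the direction of the inequality when taking $\log$ of $\theta_0 \in (0,1)$; getting the sign right in passing from $n \ge n_\bp(\cG)$ to $\cZ(\cG)\theta_0^n < 1$ is the only place a sloppy computation could go wrong. Everything else is a direct substitution using the standing constants $c_0$ and $C_\bp$ fixed in \eqref{choose Zp}.
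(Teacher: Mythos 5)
Your proof is correct and follows exactly the paper's own argument: apply Lemma~\ref{lem: growth}, observe that $n\ge n_\bp(\cG)$ forces $\cZ(\cG)\theta_0^n<1$, and compare $e^{2C_\br}(1+c_0)$ with $C_\bp$ via \eqref{choose Zp}. The extra care you take with the sign of $\log\theta_0$ is sound but adds nothing beyond the paper's one-line computation.
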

\begin{proof} By Lemma~\ref{lem: growth} and the definition of $C_\bp$ in \eqref{choose Zp}, for any $n\ge n_\bp(\cG)$,
\beqn
\cZ(T^n\cG)\le e^{2C_\br} (\cZ(\cG)\theta_0^n + c_0)\le e^{2C_\br}(1+c_0)<C_\bp,
\eeqn
and thus $T^n \cG$ is proper for any $n\ge n_\bp(\cG)$.
\end{proof}

We set
\beq\label{choose np}
n_\bp:=\left\lfloor\dfrac{- \log C_\bp}{\log \theta_0} \right\rfloor+1.
\eeq
If $\cG$ is a proper standard family, then $n_\bp(\cG)\le n_\bp$, and hence
$T^n\cG$ is proper for all $n\ge n_\bp$.

\section{Proof of Theorem~\ref{thm: coupling} -~\ref{thm: acip} }\label{sec: proofs1}

\subsection{Proof of Theorem \ref{thm: coupling}}

Throughout the section, let us fix a magnet $U$ given by Assumption (\textbf{H3}).
Theorem \ref{thm: coupling} will be proven by a coupling algorithm over $U$.
Before we describe the algorithm, let us first introduce two crucial lemmas (Lemma \ref{lem: split} and \ref{lem: covering})
whose proofs are postponed to Appendix~\ref{sec: appendix}.

\subsubsection{Lemmas for standard families over the magnet}

We first apply a special splitting of a standard family into
two parts, one of which is Lebesgue over the magnet $U$. To be more precise, let $\cG=\sum_{\alpha\in \cA} \lambda_\alpha (W_{\alpha}, \nu_\alpha)$ be a standard family.
The \emph{split family} from $\cG$ over the magnet $U$ with Lebesgue ratio $\brho\in (0, e^{-C_\br})$
is defined as
\beqn%\label{G split}
\left(\brho \bdelta \right) \cdot \bcG
+ \left(1-\brho \bdelta \right)\cdot \hcG,
\eeqn
where
$
\bcA=\bcA(U):=\{\alpha\in \cA:\ {W_\alpha=U} \},
$
$\bdelta=\sum_{\alpha \in \bcA} \lambda_\alpha$,
the Lebesgue part $\bcG$ and the split part $\hcG$ are families \footnote{It is clear the splitting operation preserves the total measure, and
the Lebesgue part $\bcG$ is a standard family and $\bcG\equiv (U, m_U)$.
Although the split part $\hcG$ is a convex sum of pairs, it
might not be a standard family, since the pairs in the first summation of \eqref{G split 1}
may not have regular densities.
Also, the average length of $\hcG$ could become shorter than that of $\cG$.} given by
\beqn
\bcG:= \sum_{\alpha \in \bcA} \lambda_\alpha\bdelta^{-1} (W_\alpha, m_{W_\alpha})
=\sum_{\alpha \in \bcA} \lambda_\alpha\bdelta^{-1} (U, m_U)\equiv (U, m_U), \ \ \text{and} \ \
\eeqn
\beqy\label{G split 1}
\hcG:=
\sum_{\alpha \in \bcA} \dfrac{(1-\brho) \lambda_\alpha}{ 1-\brho \bdelta}
\left(W_\alpha, \dfrac{\nu_{\alpha}-\brho\ m_{W_\alpha}}{1-\brho} \right)
+ \sum_{\alpha\in \cA\backslash \bcA} \dfrac{  \lambda_\alpha}{1-\brho \bdelta }  (W_\alpha, \nu_{\alpha}).
\eeqy
With this convention, we have
\begin{lemma}\label{lem: split}
There is $\brho_\bc=\brho_\bc(U)\in (0, e^{-C_\br})$
such that for any $\brho \in (0, \brho_\bc)$ and standard family $\cG$,
we denote by $\hcG$ the split part of $\cG$ over the magnet $U$ with Lebesgue ratio $\brho$,
then $T\hcG$ is a standard family, and
\beqn
\cZ(T\hcG)\le e^{4C_\br} \left(\cZ(\cG) + c_0 \right).
\eeqn
\end{lemma}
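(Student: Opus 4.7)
The plan is to handle the two classes of pairs comprising $\hcG$ in \eqref{G split 1} separately. For $\alpha\in\cA\setminus\bcA$, the pair $(W_\alpha,\nu_\alpha)$ is already standard by hypothesis, so Lemma~\ref{lem: forward iterates of sf} yields that $T(W_\alpha,\nu_\alpha)$ is a standard family. The real work lies with the residual pairs $(U,\hat\nu_\alpha)$ for $\alpha\in\bcA$, whose densities are the shifted quantities $\hat\rho_\alpha=(\rho_\alpha-\brho|U|^{-1})/(1-\brho)$ and need not obey the standard-pair bound a priori.

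I would estimate the regularity of $\log\hat\rho_\alpha$ by extracting the unperturbed piece first:
\beqn
\log\hat\rho_\alpha(x)-\log\hat\rho_\alpha(y)=\bigl(\log\rho_\alpha(x)-\log\rho_\alpha(y)\bigr)+\log\frac{1-v_\alpha(x)}{1-v_\alpha(y)},
\eeqn
with $v_\alpha(z):=\brho/(|U|\rho_\alpha(z))\in[\brho e^{-C_\br},\brho e^{C_\br}]$ by Lemma~\ref{lem: sp property}. The first summand is bounded by $C_\br\bgamma^{\bs(x,y)}$ from the standardness of $\cG$. For the second, combining the mean value theorem on $\log(1-\cdot)$ with the bound $|\rho_\alpha^{-1}(x)-\rho_\alpha^{-1}(y)|\le C_\br e^{C_\br}|U|\bgamma^{\bs(x,y)}$ (which follows from $|\log\rho_\alpha|_{U,\bgamma}\le C_\br$ via $|e^z-e^w|\le e^{\max(z,w)}|z-w|$) produces $\frac{\brho C_\br e^{C_\br}}{1-\brho e^{C_\br}}\bgamma^{\bs(x,y)}$. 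Altogether $|\log\hat\rho_\alpha|_{U,\bgamma}\le C_\br/(1-\brho e^{C_\br})$, which tends to $C_\br$ as $\brho\to 0$. Choosing $\brho_\bc\in(0,e^{-C_\br})$ small enough (for example $\brho_\bc=\frac{e^{-C_\br}(1-\bgamma)}{1+\bgamma}$ works) forces $|\log\hat\rho_\alpha|_{U,\bgamma}\le C_\br(1+\bgamma)/(2\bgamma)$ for every $\brho\in(0,\brho_\bc)$, so Remark~\ref{remark non-standard 1} makes $T(U,\hat\nu_\alpha)$ a standard family. Combining with the untouched part, $T\hcG$ is a convex combination of standard families with weights summing to $1$, hence itself a standard family.

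For the $\cZ$-estimate, a direct calculation from the weights in \eqref{G split 1} gives $\cZ(\hcG)\le\cZ(\cG)/(1-\brho\bdelta)\le\cZ(\cG)/(1-\brho)$. Since every pair in $\hcG$ then has log-density regularity bounded by $2C_\br$ (after possibly further shrinking $\brho_\bc$), Remark~\ref{remark non-standard 2} with $n=1$ yields $\cZ(T\hcG)\le e^{4C_\br}(\cZ(\hcG)\theta_0+c_0)$; a final shrinkage of $\brho_\bc$ to ensure $\theta_0/(1-\brho)\le 1$, possible since $\theta_0<1$, produces the claimed bound $\cZ(T\hcG)\le e^{4C_\br}(\cZ(\cG)+c_0)$.

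The main obstacle is the regularity control for $\hat\rho_\alpha$. A naive mean-value bound introduces a denominator $e^{-C_\br}-\brho$ and retains a factor of $e^{2C_\br}$ even at $\brho=0$, which would push $|\log\hat\rho_\alpha|_{U,\bgamma}$ above the threshold $C_\br(1+\bgamma)/(2\bgamma)$ and thus outside the standard-pair regime of Remark~\ref{remark non-standard 1}, no matter how small $\brho$ is chosen. The perturbative decomposition above cancels this obstruction by isolating $\log\rho_\alpha$ exactly and treating only the $O(\brho)$ correction, which is precisely what allows $\brho_\bc$ to be chosen positive.
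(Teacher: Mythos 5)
Your proof is correct and follows essentially the same route as the paper's: a perturbative estimate giving $|\log\hat\rho_\alpha|_{U,\bgamma}\le C_\br/(1-\brho e^{C_\br})$ for the split densities, an appeal to Remarks~\ref{remark non-standard 1} and~\ref{remark non-standard 2}, the weight bound $\cZ(\hcG)\le \cZ(\cG)/(1-\brho)$, and a final shrinking of $\brho_\bc$ so that $\theta_0/(1-\brho)\le 1$. (You also correctly write the subtracted density as $\brho|U|^{-1}$, where the paper's displayed formula carries a harmless typo, and your closing remark identifies exactly the cancellation that makes the paper's argument work.)
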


%The proof of the above Lemma \ref{lem: split} is given in Appendix A.
%Recall that the constant $n_\bp$ is given in \eqref{choose np}.
%Now we state the following covering ratio lemma for our coupling algorithm.

\medskip

Next, We define the \emph{covering ratio} of a family
$\cG=\sum_{\alpha\in \cA} \lambda_\alpha  (W_{\alpha}, \nu_\alpha)$ over the magnet $U$ by
\beqn%\label{covering ratio}
\delta(\cG)=\sum_{\alpha\in \cA(U)} \lambda_\alpha,
\eeqn
where
$
\cA(U):=\{\alpha\in \cA:\ W_\alpha\ \text{contains} \ U\}.
$
Note that following properties of $\delta(\cdot)$ are straightforward from the definition.
\begin{itemize}
\item[(1)] If a family $\cG$ is a sum of countably many families, say, $\cG=\sum_i \lambda_i \cG_i$,
then for any $n\ge 0$,
\beq\label{delta p1}
\delta(T^n\cG)=\sum_{i} \lambda_i \delta(T^n\cG_i).
\eeq
\item[(2)]
By \eqref{Leb replace}, for any standard pair $(W, \nu)$ and any $n\ge 0$,
\beq\label{delta p2}
\delta(T^n(W, \nu))\ge e^{-C_\br} \delta(T^n(W, m_W)).
\eeq
\item[(3)]
If $\cG'$ is a cut family from a family $\cG$,
then for any $n\ge 0$.
\beq\label{delta p3}
\delta(T^n\cG)\ge \delta(T^n\cG').
\eeq
\item[(4)]
If $\cG$ is a standard family, and $\cG'$ is the cut family from $\cG$
with pairs in $\cA(U)$ being cut by the two endpoints of $U$, then
by Lemma~\ref{lem: sp property},
\beq\label{delta p4}
\delta(\cG')\ge e^{-C_\br} |U| \delta(\cG).
\eeq
\end{itemize}
Based on these properties, we have the following quantitative estimation on $\delta(\cdot)$.
\begin{lemma}\label{lem: covering}
There are $n_\bc=n_\bc(U)\ge n_\bp$ and $d_\bc=d_\bc(U)\in (0, 1)$ such that
for any proper standard family $\cG$, we have
$
\delta(T^{n_\bc}\cG)\ge d_\bc.
$
\end{lemma}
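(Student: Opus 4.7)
The plan is to concatenate two iterate counts: first enough iterations to make $\cG$ proper, then a uniform count $n_2$ tailored to the magnet $U$. Let $\cG_1 := T^{n_\bp}\cG = \sum_\alpha \lambda_\alpha (W_\alpha,\nu_\alpha)$, which is proper by Lemma~\ref{lem: growth proper}, so $\cZ(\cG_1) \le C_\bp$. I would extract from the $\cZ$-bound that short pairs carry little weight: since
\[
\sum_{\alpha:\,|W_\alpha|<\delta^*} \lambda_\alpha \,(\delta^*)^{-q_0} \;\le\; \cZ(\cG_1) \;\le\; C_\bp,
\]
choosing $\delta^* := (2C_\bp)^{-1/q_0}$ forces $\sum_{\alpha \in \cA_{long}} \lambda_\alpha \ge 1/2$, where $\cA_{long} := \{\alpha:|W_\alpha|\ge \delta^*\}$.

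Next I would make Assumption~\textbf{(H3)} uniform on the scale $\delta^*$. Partition $[0,1]$ into finitely many consecutive reference intervals $V_1,\dots,V_K$ of length $\delta^*/2$, with $K := \lceil 2/\delta^*\rceil$. An elementary interval-arithmetic check shows that every $W\subset M$ with $|W|\ge \delta^*$ must fully contain at least one $V_i$. Applying (\textbf{H3}) to each $V_i$ yields integers $n_{V_i}$; let $n_2 := \max_{1\le i\le K} n_{V_i}$, finite because $K$ is finite. For each $i$, fix one component $V_{i,\gamma_i}$ of $V_i/\xi_{n_2}$ with $T^{n_2}V_{i,\gamma_i}\supseteq U$, and set $\ell^* := \min_{1\le i\le K}|V_{i,\gamma_i}|>0$, a finite minimum of positive numbers.

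Finally, I would estimate $\delta(T^{n_2}\cG_1)$ pair by pair via property~\eqref{delta p1}. For any $\alpha\in\cA_{long}$, select $V_{i(\alpha)}\subseteq W_\alpha$; the element $W_{\alpha,\beta_0}\in W_\alpha/\xi_{n_2}$ containing the interior of $V_{i(\alpha),\gamma_{i(\alpha)}}$ must itself contain $V_{i(\alpha),\gamma_{i(\alpha)}}$ (no cut of $\cS_{n_2}$ lies strictly inside the latter), whence $|W_{\alpha,\beta_0}|\ge \ell^*$ and $T^{n_2}W_{\alpha,\beta_0}\supseteq T^{n_2}V_{i(\alpha),\gamma_{i(\alpha)}}\supseteq U$. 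By the density bound in Lemma~\ref{lem: sp property}, the weight of this distinguished pair inside $T^{n_2}(W_\alpha,\nu_\alpha)$ satisfies
\[
\nu_\alpha(W_{\alpha,\beta_0}) \;\ge\; e^{-C_\br}\,|W_{\alpha,\beta_0}|/|W_\alpha| \;\ge\; e^{-C_\br}\ell^*.
\]
Summing via \eqref{delta p1} over $\alpha\in\cA_{long}$ gives $\delta(T^{n_2}\cG_1)\ge \tfrac{1}{2} e^{-C_\br}\ell^*$. Since $T^{n_2}\cG_1 = T^{n_\bp+n_2}\cG$, the lemma holds with $n_\bc := n_\bp + n_2$ and $d_\bc := \tfrac{1}{2}e^{-C_\br}\ell^*$.

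The main obstacle, to my mind, is securing uniformity in~\textbf{(H3)}: a priori the covering time $n_W$ depends on $W$ and could blow up as $|W|$ shrinks. The reduction to the finite reference set $\{V_i\}_{i=1}^K$, made legitimate by the $\cZ$-bound that suppresses the tail of short pairs, is precisely what converts the qualitative assumption (\textbf{H3}) into a quantitative, uniform lower bound on the covering ratio.
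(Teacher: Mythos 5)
Your proposal is correct and follows essentially the same route as the paper: reduce to a finite grid of reference intervals, use the properness bound $\cZ(\cG)\le C_\bp$ to show that pairs longer than the grid scale carry at least half the total weight, and make Assumption \textbf{(H3)} uniform by taking the maximum covering time over the finitely many grid cells. The only (harmless) difference is in the final bookkeeping: the paper passes through a cut family and the quantity $\min_i \delta\bigl(T^{n_\bc}(W_i, m_{W_i})\bigr)$ together with properties \eqref{delta p1}--\eqref{delta p4}, whereas you track an explicit element of $W_\alpha/\xi_{n_2}$ of length at least $\ell^*$ whose image contains $U$ and bound its $\nu_\alpha$-mass directly via Lemma~\ref{lem: sp property}.
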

%The proof of Lemma \ref{lem: covering} is provided in Appendix B.

\subsubsection{The coupling algorithm for Theorem \ref{thm: coupling}}

We are now ready to describe our coupling algorithm. Fix a magnet U given by Assumption {\textbf (H3)}. Let $\brho_\bc\in (0, e^{-C_\br})$ be given by
Lemma~\ref{lem: split}, and
let $n_\bc\ge n_\bp$, $d_\bc\in (0, 1)$ be given by Lemma~\ref{lem: covering}.
Set
\beqn%\label{def Theta}
\Theta_\bc:=e^{-C_\br} |U| d_\bc \brho_\bc.
\eeqn
Given a proper standard family $\cG$, we set $\hcG_0=\cG$ and $\wcG_0=T^{1+n_\bp} \hcG_0$.
By \eqref{choose np}, $\wcG_0$ is still a proper standard family.
Starting from $\wcG_0$, we apply the following inductive procedure. Assume that
a proper standard family $\wcG_k$ is defined, we shall obtain $\hcG_{k+1}$ and $\wcG_{k+1}$ as follows:
\begin{itemize}
\item[(1)] \textbf{Iteration:}
By Lemma~\ref{lem: covering}, $\delta(T^{n_\bc} \wcG_k)\ge d_\bc$.
Also, by \eqref{choose np}, $T^{n_\bc} \wcG_k$ is a proper standard family.
\item[(2)] \textbf{Cutting:}
Let $\cG_{k+1}'$ be the cut family from $T^{n_\bc}\wcG_k$ with pairs that contains $U$
being cut by the two endpoints of $U$. By Inequality \eqref{delta p4},
$\delta(\cG_{k+1}')\ge e^{-C_\br} |U| d_\bc = \Theta_\bc /\brho_\bc$.
By Lemma~\ref{lem: cut length},
$\cZ(\cG_{k+1}')\le 3e^{C_\br} \cZ(T^{n_c} \wcG_k)\le 3 e^{C_\br} C_\bp$.
\item[(3)] \textbf{Splitting:}
Set $\brho_{k+1}:= \Theta_\bc / \delta(\cG_{k+1}')$. We split $\cG_{k+1}'$
over the magnet $U$ with Lebesgue ratio $\brho_{k+1}$, and obtain
\beq\label{G split 2}
\cG_{k+1}'\equiv \Theta_\bc \bcG_{k+1} + (1-\Theta_\bc) \hcG_{k+1},
\eeq
where $\bcG_{k+1}$ is the Lebesgue part and $\hcG_{k+1}$ is the split part.
By Lemma~\ref{lem: split}, $T\hcG_{k+1}$ is a standard family, and
\beqn
\cZ(T\hcG_{k+1})\le e^{4C_\br} (\cZ(\cG_{k+1}') + c_0)\le 3 e^{5C_\br} (C_\bp + c_0).
\eeqn
By Lemma~\ref{lem: growth}, Equations \eqref{choose Zp} and \eqref{choose np},
{\allowdisplaybreaks
\beqyn
\cZ(T^{1+n_\bp} \hcG_{k+1})
&\le & e^{2C_\br}(\cZ(T\hcG_{k+1}) \theta_0^{n_\bp} +c_0) \\
&\le & e^{2C_\br}( 3 e^{5C_\br} (C_\bp + c_0) \theta_0^{n_\bp} +c_0) \\
&\le & e^{2C_\br}( 3 e^{5C_\br} (1 + c_0)  +c_0) \\
&\le & 7c_0 e^{7C_\br} < C_\bp.
\eeqyn
}Therefore, $\wcG_{k+1}:=T^{1+n_\bp} \hcG_{k+1}$ is a proper standard family.
\end{itemize}

Set $N_\bc:=(1+n_\bp+n_\bc)$. At the $k$-th step of the above coupling construction,
the Lebesgue part $\bcG_k=\sum_{\alpha\in \bcA_k} \blambda_\alpha (U, m_U)$ has the following property:
the index set $\bcA_k\subset M/\xi_{k N_\bc}$, and there is an interval $W_\alpha$ inside some element of $\xi_{k N_\bc}$
such that $T^{k N_\bc} W_{\alpha}=U$. In particular, $T^{k N_\bc}$ is invertible on $W_\alpha$.
Then we can define the family
\beqn
T^{-kN_\bc} \bcG_{k}
:=\sum_{\alpha} \blambda_\alpha \left(W_\alpha, \left[(T^{kN_\bc}|_{W_\alpha})^{-1}\right]_* m_U\right).
\eeqn
For any $n\ge 1$, we define
\beqn
\cG_n:=\begin{cases}
\Theta_\bc (1-\Theta_\bc)^{k-1} T^{-kN_\bc} \bcG_{k}, \ & \ \text{if}\ n=kN_\bc, \\
\text{null}, \ & \ \text{otherwise}.
\end{cases}
\eeqn
It is easy to see that Statement (1) of Theorem \ref{thm: coupling} holds with $\nu_n:=\nu_{\cG_n}$.  %\textcolor[rgb]{1.00,0.00,0.00}{HZ: Are you saying $\cG_n$ is proper?}
By \eqref{G split 2}, we have for any $k\ge 1$,
{\allowdisplaybreaks
\beqyn
T^{kN_\bc}\cG=T^{kN_\bc} \hcG_0
&\equiv & T^{(k-1)N_\bc} \left(\Theta_\bc \bcG_{1} + (1-\Theta_\bc) \hcG_{1}\right) \\
&\equiv & \Theta_\bc T^{(k-1)N_\bc} \bcG_{1} + (1-\Theta_\bc) T^{(k-1)N_\bc} \hcG_{1} \\
&\equiv & T^{kN_\bc} \cG_{N_\bc} + (1-\Theta_\bc) T^{(k-2)N_\bc}
\left(\Theta_\bc \bcG_{2} + (1-\Theta_\bc) \hcG_{2}\right) \\
&\equiv & T^{kN_\bc} \cG_{N_\bc}  + \Theta_\bc (1-\Theta_\bc) T^{(k-2)N_\bc} \bcG_{2}
+(1-\Theta_\bc)^2 T^{(k-2)N_\bc} \hcG_2 \\
&\equiv & T^{kN_\bc} \cG_{N_\bc} + T^{kN_\bc} \cG_{2N_\bc} +(1-\Theta_\bc)^2 T^{(k-2)N_\bc} \hcG_2 \\
&\equiv & \dots \\
&\equiv & T^{kN_\bc} \left( \sum_{i=1}^k \cG_{i N_\bc} \right) + (1-\Theta_\bc)^k \hcG_k.
\eeqyn
}It is obvious that $\nu_\cG - \sum_{i=1}^k \nu_{\cG_{i N_\bc}}$ is a non-negative measure, and thus,
{\allowdisplaybreaks
\beqy\label{exp tail est}
\left\|\nu_\cG - \sum_{i=1}^k \nu_{\cG_{i N_\bc}}\right\|_{TV}
&=& \left(\nu_\cG - \sum_{i=1}^k \nu_{\cG_{i N_\bc}}\right)(M) \nonumber \\
&=& T^{kN_\bc}_*\left(\nu_\cG - \sum_{i=1}^k \nu_{\cG_{i N_\bc}}\right)(T^{kN_\bc} M) \nonumber \\
&\le &(1-\Theta_\bc)^k,
\eeqy
}which implies that
\beqn
\nu_\cG\equiv \sum_{k=1}^\infty \nu_{\cG_{kN_{\bc}}} =\sum_{n=1}^\infty \nu_n.
\eeqn
This provides the decomposition of $\nu_\cG$ in Theorem \ref{thm: coupling}.
Moreover, the exponential tail bound in Statement (2) directly follows from \eqref{exp tail est}.
Therefore, the proof of Theorem \ref{thm: coupling} is complete.

\subsection{Proof of Theorem~\ref{thm: equi}}

Let $\cG^1$ and $\cG^2$ be two proper standard families. By Theorem~\ref{thm: coupling},
we decompose their total measures as
$\nu_{\cG^i}= \sum_{k=1}^\infty \nu^i_k$, $i=1, 2$, such that
$T^{k}_*\nu^1_k=T^{k}_*\nu^2_k$ and
$\sum_{k>n}\nu^i_k (M)\le (1-\Theta_\bc)^{n/N_\bc}$.
Therefore,
{\allowdisplaybreaks
\beqyn
\left\| T^n_* \nu_{\cG^1} - T^n_*\nu_{\cG^2} \right\|_{TV}
&\le & \left\| \sum_{k=1}^n T_*^{n-k}\left( T^{k}_*\nu^1_k-T^{k}_*\nu^2_k \right)\right\|_{TV} \\
& & +  \left\| T^n_*\sum_{k>n}\nu^1_k \right\|_{TV} +  \left\| T^n_*\sum_{k>n}\nu^2_k \right\|_{TV} \\
&\le & \left\| \sum_{k>n}\nu^1_k \right\|_{TV} +  \left\| \sum_{k>n}\nu^2_k \right\|_{TV} \le
2 (1-\Theta_\bc)^{n/N_\bc}.
\eeqyn
}This completes the proof of Theorem~\ref{thm: equi}.

\subsection{Proof of Theorem~\ref{thm: acip}}

Let $\theta_0$ be given in \eqref{def theta_0}, and set
\beqn\label{def vartheta_bc}
\vartheta_\bc:=\max\{\theta_0, (1-\Theta_\bc)^{1/N_\bc}\}, \ \
\text{and} \ \ C_\bc=\frac{2}{\theta_0(1-\vartheta_\bc)}.
\eeqn
We first show that there is a probability measure $\mu$ on $M$
such that $T^n_*\nu_\cG=\nu_{T^n\cG}$ converges to $\mu$ in the total variation norm
for any standard family $\cG\in \fF$.
By Lemma~\ref{lem: growth proper},
$T^n\cG$ is a proper standard family for any $n\ge n_\bp(\cG)$,
and note that $\theta_0\le \cZ(\cG)\theta_0^{n_\bp(\cG)}\le 1$.
Apply Theorem~\ref{thm: equi} to the proper standard families
$\cG^1=T^{n_\bp(\cG)}\cG$ and $\cG^2=T^{n_\bp(\cG)+1}\cG$, we get
{\allowdisplaybreaks
\beqyn
\left\|T^{n+1}_*\nu_\cG -T^n_*\nu_\cG \right\|_{TV}
&\le &
\begin{cases}
2, \ & \ n<n_\bp(\cG), \\
2 \vartheta_\bc^{n-n_\bp(\cG)}, \ & \ n\ge n_\bp(\cG)
\end{cases}
\\
&\le & 2 \vartheta_\bc^{n-n_\bp(\cG)}
\le 2 \vartheta_\bc^n \theta_0^{-n_\bp(\cG)}
\le 2\theta_0^{-1} \vartheta_\bc^n \cZ(\cG).
\eeqyn
}It follows that $T^n_*\nu_\cG$ is a Cauchy sequence in the total variation norm,
and hence it converges to some probability measure $\mu$, such that
\beqn
\left\|T^n_*\nu_\cG - \mu \right\|_{TV}
\le \sum_{k=n}^\infty 2\theta_0^{-1} \vartheta_\bc^k \cZ(\cG)
= C_\bc \vartheta_\bc^n \cZ(\cG).
\eeqn
Given another standard family $\cG'\in \fF$, and applying Theorem~\ref{thm: equi}
to $\cG^1=T^{n_\bp(\cG)}\cG$ and $\cG^2=T^{n_\bp(\cG')}\cG'$, we get
\beqn
\left\|T^{n}_*\nu_\cG -T^n_*\nu_{\cG'} \right\|_{TV}\le 2 \vartheta_\bc^{n-\max\{n_\bp(\cG), n_\bp(\cG')\}},
\eeqn
for any $n\ge \max\{n_\bp(\cG), n_\bp(\cG')\}$.
Therefore, $T^n_*\nu_\cG'$ converges to the same measure $\mu$.

It is obvious that $\mu$ is $T$-invariant.
It remains to show that $\mu$ is absolutely continuous, that is,
$m(A)>0$ for any Borel subset $A\subset M$ with $\mu(A)>0$.
To see this, we
consider the Lebesgue standard pair $\cG_0=(M, m)$, then there is
a large $n\ge 1$ such that $\|T^n_*m - \mu\|_{TV}\le 0.5 \mu(A)$, and thus
$m(T^{-n}A)\ge 0.5\mu(A)>0$.
Since $T$ is non-singular with respect to $m$, we must have $m(A)>0$.

\section{Proof of Theorem~\ref{thm: density}}%\label{sec: proof2}

To prove Theorem~\ref{thm: density}, we need the following preparations.

\subsection{Second growth lemma}

We recall an alternative definition of the characteristic $\cZ$
function (see Section 5 in \cite{MR2540156} or \S 7.4 in \cite{MR2229799} with $q_0=1$).
Given an interval $W\subset M$ and a point $x\in W$, we denote
$r_W(x):=\dist(x, \partial W)$, that is, the Euclidean distance from $x$ to the closest endpoint of $W$.
Further,
given a family $\cG=\sum_{\alpha\in \cA} \lambda_\alpha (W_{\alpha}, \nu_\alpha)$
and a point $x\in W_\alpha$, we shall denote $r_{\cG}(x)=r_{W_\alpha}(x)$ if the choice of $\alpha$ is clear.
We then denote
\beqn
\wcZ(\cG)
:=\sup_{\eps>0}  \dfrac{\nu_\cG\left( r_\cG<\eps \right)}{\eps^{q_0}}
=\sup_{\eps>0}
\dfrac{\sum_{\alpha\in \cA} \lambda_\alpha \nu_\alpha \left\{x\in W_\alpha:\ r_{W_\alpha}(x)<\eps \right\}}{\eps^{q_0}}.
\eeqn
Using the fact that $m(r_W<\eps)=\min\{2\eps, |W|\}$ and Lemma~\ref{lem: sp property}, it is easy to show that
$
\wcZ(\cG) \le 2e^{2C_\br} \cZ(\cG)
$
for any standard family $\cG$.

The growth lemma that we establish in Lemma~\ref{lem: growth}
is usually called the \emph{first growth lemma},
which immediately implies the following \emph{second growth lemma}.

\begin{lemma}\label{second growth}
For any $\eps>0$ and
any standard pair $\cG=(W, \nu)$,
we have
\beq\label{second growth lem}
\nu\left(r_{W, n}(x)<\eps\right):=
\nu\left\{ x\in W:\ r_{T^n \cG}(T^n x) <\eps \right\} < C_\bp \eps^{q_0}
\eeq
for all $n> q_0 \log_{\theta_0} |W|$,
where
$q_0$, $\theta_0$ are given in \eqref{def theta_0} and
$C_\bp$ is given in \eqref{choose Zp}.
\end{lemma}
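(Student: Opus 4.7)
The plan is to deduce the second growth lemma from the first growth lemma (Lemma~\ref{lem: growth}) via the auxiliary characteristic $\wcZ$, using the elementary comparison $\wcZ(\cdot) \le 2e^{2C_\br}\cZ(\cdot)$ that is mentioned right before the statement.

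First I would rewrite the quantity of interest as a pushforward. Since $T^n_*\nu = \nu_{T^n\cG}$, we have
\[
\nu\bigl\{x\in W:\ r_{T^n\cG}(T^nx)<\eps\bigr\}
= \nu_{T^n\cG}\bigl\{y:\ r_{T^n\cG}(y)<\eps\bigr\}
\le \wcZ(T^n\cG)\,\eps^{q_0},
\]
where the last inequality is immediate from the definition of $\wcZ$. Hence it suffices to show that $\wcZ(T^n\cG)<C_\bp$ under the stated hypothesis on $n$.

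Next I would invoke the two-sided density bound of Lemma~\ref{lem: sp property} applied componentwise to the standard family $T^n\cG$ (which is a standard family by Lemma~\ref{lem: forward iterates of sf}) together with the trivial estimate $m(r_{W_\alpha}<\eps)\le 2\eps$ to obtain the inequality
\[
\wcZ(T^n\cG) \le 2e^{2C_\br}\,\cZ(T^n\cG).
\]
Applying the first growth lemma to $\cG=(W,\nu)$, which satisfies $\cZ(\cG)=|W|^{-q_0}$, gives
\[
\cZ(T^n\cG)\le e^{2C_\br}\bigl(|W|^{-q_0}\theta_0^{n}+c_0\bigr).
\]

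Finally I would use the condition $n>q_0\log_{\theta_0}|W|$ to eliminate the first term. Since $\theta_0\in(0,1)$, this condition is equivalent to $\theta_0^{n}<|W|^{q_0}$, so $|W|^{-q_0}\theta_0^{n}<1$. Combining the three inequalities,
\[
\wcZ(T^n\cG)\le 2e^{4C_\br}(1+c_0)\le 4c_0 e^{4C_\br} < 10 c_0 e^{7C_\br}\le C_\bp,
\]
where in the last step we used $c_0\ge 1$ from \eqref{choose Zp} and the choice of $C_\bp$. This yields \eqref{second growth lem}.

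There is essentially no hard step here: the whole proof is bookkeeping once one identifies that the probability in \eqref{second growth lem} is exactly controlled by $\wcZ(T^n\cG)$. The only point requiring minor care is the conversion of $n>q_0\log_{\theta_0}|W|$ into $\theta_0^{n}<|W|^{q_0}$, where the direction of the inequality flips because $\log\theta_0<0$; after that, the constants chosen in \eqref{choose Zp} are tailored exactly so that the bound falls below $C_\bp$.
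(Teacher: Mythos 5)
Your proof is correct and follows essentially the same route as the paper: bound the probability by $\wcZ(T^n\cG)\,\eps^{q_0}$, compare $\wcZ$ with $\cZ$ via Lemma~\ref{lem: sp property}, apply the first growth lemma with $\cZ(\cG)=|W|^{-q_0}$, and use $n>q_0\log_{\theta_0}|W|$ together with the choice of constants in \eqref{choose Zp} to land below $C_\bp$. The only difference is that you spell out the intermediate numerical estimates slightly more explicitly than the paper does.
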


\begin{proof}
By Lemma~\ref{lem: growth} and the choice of constants in \eqref{choose Zp},
for any $\eps>0$,
any standard pair $\cG=(W, \nu)$ and any $n> q_0 \log_{\theta_0} |W|$, we have
\beqn
\wcZ(T^n\cG)\le 2e^{2C_\br} \cZ(T^n\cG)\le 2e^{4C_\br} \left( \theta_0^n/|W|^{q_0} + c_0  \right)
\le 2e^{4C_\br}(1+c_0)< C_\bp.
\eeqn
In other words, \eqref{second growth lem} holds
for any $\eps>0$.
\end{proof}

Lemma~\ref{second growth} is a slight generalization of
the second growth lemma in \S 5.9 of \cite{MR2229799}, in which
$q_0=1$ and $\cG$ is restricted to a normalized Lebesgue standard pair.
To avoid confusion, we point out that we use the notation $m_W(\cdot)$ to represent
the normalized Lebesgue measure on $W$ in this paper, while
$m_W(\cdot)$ is the unnormalized one in \cite{MR2229799}.

\begin{comment}
Based on this second growth lemma and a pure probabilistic argument in
\S 5.11 of \cite{MR2229799}, we further obtain the \emph{third growth lemma} as follows.

\begin{lemma}\label{third growth}
There are constants $\eps_0>0$ and $\vartheta\in (0, 1)$ such that for any
standard pair $\cG=(W, \nu)$ and for any $n_2>n_1> q_0 \log_{\theta_0} |W|$,
we have
\beqn
\nu\left( \max_{n_1\le n<n_2}  r_{W, n}(x)<\eps_0\right) \le \vartheta^{n_2-n_1}.
\eeqn
\end{lemma}

The proof of Lemma~\ref{third growth} is exactly the same as the one in \S 5.11 of \cite{MR2229799},
with a slight modification in the general case when
$q_0<1$ and $\nu$ need not be Lebesgue. Hence we omit the proof here.
\end{comment}

\subsection{Hofbauer tower and liftability}\label{sec: Hofbauer}

In order to show that the invariant density $h=d\mu/dm$ is a dynamically H\"older
series, we first need to construct the corresponding collection $\cW_h$ of supporting intervals.
To this end, we introduce a Markov extension over the system
$(M, T, \xi_1)$ which is nowadays called \emph{Hofbauer tower}.
For references on this subject, see
\cite{MR570882, MR599481, MR1026617, MR1328254, MR1469107, MR1714974, MR2322177, MR2408392, MR3249887}, etc.

For our purpose, we construct the Hofbauer tower as follows:
we set $\cD_0:=\{M\}$ and for $n\ge 1$,
\beqn
\cD_n:=\left\{ T\left(W\cap V\right):\ W\in \xi_1 \ \text{and} \ V\in \cD_{n-1} \right\}.
\eeqn
It is not hard to see that $\cD_n=\{T^n W_\alpha: \ \alpha\in M/\xi_n\}$, that is,
$\cD_n$ is the collection of components of $T^nM$.
We further set $\cD=\cup_{n\ge 0} \cD_n$, which is a collection of countably many intervals.
The \emph{Hofbauer tower extension} over $(M, T, \xi_1)$ is the triple $(\hM, \hT, \hxi \ )$ where
\begin{itemize}
\item[(1)]
the tower is given by
$\hM:=\left\{(x, D)\in M\times \cD: \ x\in \oD\right\}$;
\item[(2)]
the map
$\hT: \hM\backslash \pi^{-1}(\cS_1) \to \hM$ is given by
$
\hT(x, D)=(T(x), T(D\cap W(x))),
$
where $W(x)$ is the interval in $\xi_1$ containing $x$ and
$\pi: \hM\to M$ is the canonical projection, i.e., $\pi(x, D)=x$;
\item[(3)]
the partition of $\hM$ is given by $\hxi:=\{\hD \}_{D\in \cD}$,
where for any interval $D\in \cD$,
we set $\hD:=\left\{(x, D):  x\in \oD \right\}$,
which is an identical copy of $\oD$.
\end{itemize}
It is easy to see that $\hxi$ is a Markov partition for $\hT$. Also,
$\hT$ is an extension of $T$ via the projection $\pi$, i.e., $\pi\circ \hT=T\circ \pi$.
By extending the Euclidean metric of the unit interval $M$ to the tower $\hM$ in a natural way,
we have that $\hM$ is a complete separable metric space,
which is not necessarily to be compact unless the map $T$ is already Markov.
For any $D\in \cD$, we define the \emph{level of $D$} as
\beqn
\ell(D):=\min\{n\ge 0:\ D\in \cD_n \}.
\eeqn
Further, for any $\hx=(x, D)\in \hM$, we define the level of $\hx$ as $\ell(\hx)=\ell(D)$.
Then we set the \emph{$n$-level set of $\hM$} to be $\hM_n:=\{\hx\in \hM: \ \ell(\hx)=n\}$.
In particular, we call $\hM_0$ the base of the tower $\hM$, which is an identical copy of $M$.

We now discuss the liftability property of the Lebesgue measure.
Let $\fB$ be the Borel $\sigma$-algebra of $M$, then by extension,
$\hfB:=\hxi\vee \pi^{-1}\fB$ is the Borel $\sigma$-algebra of $\hM$.
We then extend the normalized Lebesgue measure $m$ on $M$ to
a (possibly infinite) measure $\om$ on $\hM$ by setting
$
\om(A)=\sum_{D\in \cD} m\left(\pi\left( A\cap \hD\right)\right)
$
for any $A\in \hfB$. Define a sequence of measures on $\hM$ by
\beq\label{def omn}
\om_n(A)=\om\left(\hT^{-n}A\cap \hM_0\right), \ \ \text{for any} \ n\ge 0.
\eeq
Note that $\om_n$ are all probability measures and
$\pi_* \om_n=T_*^n m$, that is, $\om_n$ projects to $T_*^n m$,
or equivalently, we say that $T_*^n m$ is lifted to $\om_n$.
Similarly, we denote the Cesaro means of $\om_n$ by $\wm_n$, that is,
\beqn%\label{def wmn}
\wm_n:=\frac{1}{n}\sum_{k=0}^{n-1} \om_k, \ \ \text{for any} \ n\ge 1.
\eeqn
Note that $\wm_n$ projects to $\frac{1}{n}\sum_{k=0}^{n-1}  T_*^k m$.
We say that $\{\wm_n\}_{n\ge 0}$ is \emph{liftable} if
$\wm_n$ has a subsequence which converges weak
star to a non-vanishing, in fact, probability measure on $\hM$.
To show the liftability, we will prove that

\begin{lemma}\label{lem: tight}
The sequence of measures $\wm_n$ is tight, i.e.,
for any $\delta>0$, there exists a compact subset $F\subset  \hM$ such that
$\wm_n(\hM\backslash F)<\delta$ for all $n$.
\end{lemma}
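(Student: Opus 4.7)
The plan is to produce, for each $\delta>0$, a compact set $F_\delta\subset\hM$ with $\wm_n(\hM\setminus F_\delta)<\delta$ uniformly in $n$, after which Prokhorov's theorem will yield tightness. The natural metric on $\hM$ makes each $\hD$ (for $D\in\cD$) both clopen and itself compact as a closed interval, so every compact subset of $\hM$ is contained in a finite union $\bigsqcup_{D\in\cD^*}\hD$; I shall take $F_\delta$ to equal such a union. Because $\wm_n=\frac1n\sum_{k=0}^{n-1}\om_k$, it suffices to find a finite $\cD^*_\delta\subset\cD$ with $\sup_k\,\om_k(\hM\setminus F_\delta)<\delta$, and I will split this tail into a ``high-level'' part and a ``bulky-within-a-level'' part.

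For the high-level part, I would show that some $N=N(\delta)$ satisfies $\sup_k \om_k(\hM_{>N})<\delta/2$, where $\hM_{>N}:=\bigsqcup_{\ell(D)>N}\hD$. The event $\hT^k(x,M)\in\hM_{>N}$ means that the $\xi_k$-cylinder $W_\alpha\ni x$ produces an interval $T^kW_\alpha$ first appearing in $\cD$ only at a level exceeding $N$; the key geometric claim, discussed below, is that this forces $T^kW_\alpha$ to be short at $T^kx$, i.e., $r_{T^kW_\alpha}(T^kx)<\eps_N$ for some $\eps_N\searrow 0$ as $N\to\infty$. The second growth lemma (Lemma~\ref{second growth}) applied to the Lebesgue standard pair $(M,m)$---valid for every $k\ge 1$ since $|M|=1$---then gives
\[
\om_k(\hM_{>N})\ \le\ m\bigl\{x:\ r_{T^k(M,m)}(T^kx)<\eps_N\bigr\}\ \le\ C_\bp\,\eps_N^{q_0},
\]
which falls below $\delta/2$ for $N$ sufficiently large. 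For the bulky-within-a-level part, for each $0\le k\le N$ I choose a finite $\cD^{**}_k\subset\cD_k$ with $\sup_j\,\om_j\bigl(\hM_k\setminus\bigsqcup_{D\in\cD^{**}_k}\hD\bigr)<\delta/(2(N+1))$, which is possible because $\sum_{D\in\cD_k}\om_j(\hD)\le 1$ and the first growth lemma (Lemma~\ref{lem: growth}) applied to $(M,m)$ keeps the mass carried by short components uniformly controlled once $j\ge n_\bp$. Setting $\cD^*_\delta:=\bigcup_{k\le N}\cD^{**}_k$ and $F_\delta:=\bigsqcup_{D\in\cD^*_\delta}\hD$ then completes the construction.

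The main obstacle is justifying the geometric claim that ``$T^kW_\alpha$ first appears in $\cD$ only at level $>N$'' implies ``$T^kW_\alpha$ is short at $T^kx$''. Level drops in the Hofbauer tower occur exactly when $T(D\cap W)$ coincides with an interval of $\cD$ at a strictly smaller level, and without a Markov partition such coincidences must be engineered from the dynamics rather than assumed. Here Assumption (\textbf{H3}) is essential: once a component $T^kW_\alpha$ grows enough to contain the magnet $U$, a bounded number of further iterates must produce an image that coincides with a bounded-level element of $\cD$, thereby forcing a level drop within $O(1)$ time. Consequently, intervals arising only at levels $>N$ must have remained short throughout a number of consecutive iterations proportional to $N$; converting this qualitative statement into the precise quantitative rate $\eps_N\searrow 0$, and simultaneously establishing the uniform-in-$j$ tail estimate within each low level, constitutes the technical heart of the argument.
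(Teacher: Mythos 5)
Your overall decomposition (a high-level tail controlled by the second growth lemma, plus a finite exhaustion within each low level) is the same as the paper's, but both steps you defer are genuine gaps, and the mechanism you sketch for the first is not the right one. The statement you want --- ``$\ell(\hT^k\hx)>N$ forces $r_{T^kW_\alpha}(T^kx)<\eps_N$ at the \emph{same} time $k$, via Assumption (\textbf{H3})'' --- is neither proved nor the statement actually needed: (\textbf{H3}) says nothing about level drops in the Hofbauer tower, since a component containing the magnet $U$ need not coincide with any low-level element of $\cD$, so no drop is forced in $O(1)$ time. The paper instead looks \emph{backwards by a fixed number of steps} and uses only that $\xi_1$ is generating: fix $\eps_0$ with $C_\bp\eps_0^{q_0}<\delta/2$ and choose $L$ with $\mathrm{diam}(\xi_L)<\eps_0$. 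If at time $n-L$ the point $T^{n-L}x$ lies at distance $\ge\eps_0$ from $\partial D_{n-L}$, then the $\xi_L$-cylinder containing $T^{n-L}x$ sits entirely inside $D_{n-L}$, so $L$ further iterates of $\hT$ yield a component equal to the $T^L$-image of a $\xi_L$-cylinder, an element of $\cD_L$; hence $\ell(\hT^n\hx)\le L$. The contrapositive together with Lemma~\ref{second growth} gives $\om_n(\{\ell>L\})\le C_\bp\eps_0^{q_0}$, with no appeal to (\textbf{H3}) and no need for a rate $\eps_N\searrow 0$.

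The second gap is the uniform-in-$j$ finite exhaustion within each low level. Knowing $\sum_{D\in\cD_k}\om_j(\hD)\le 1$ for each $j$ separately does not produce a single finite $\cD_k^{**}$ that works for all $j$, and Lemma~\ref{lem: growth} controls the mass carried by \emph{short} components, which is not the issue here (at a fixed level there may be infinitely many components, and which cylinders land at level $k$ changes with $j$). The paper obtains the uniformity from Theorem~\ref{thm: acip}: since $\om_j(A)\le T^j_*m(\pi(A))$ and $\|T^j_*m-\mu\|_{TV}\le C_\bc\vartheta_\bc^{j}$, one chooses the finite families so that the projected complements are small simultaneously for $\mu$ and for $T^j_*m$ with $0\le j\le L$, and for $j>L$ the total-variation bound (with $C_\bc\vartheta_\bc^{L}<\delta/4$) takes over. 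Replacing your two deferred claims by these two arguments closes the proof.
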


\begin{proof}
It suffices to show that $\om_n$ is tight.
Choose $\eps_0>0$ such that $C_\bp \eps_0^{q_0}<\delta/2$,
where $q_0$ and $C_\bp$ are given by
\eqref{def theta_0} and \eqref{choose Zp} respectively.
Since $\xi_1$ is a generating partition,
we can choose $L\in \IN$
such that $\xi_L=\bigvee\limits_{k=0}^{L-1} T^{-k} \xi_1$
has diameter smaller than $\eps_0$.
Furthermore,
we may assume $C_\bc \vartheta_\bc^L<\delta/4$,
where $C_\bc$ and $\vartheta_\bc$ are the constants
given by Theorem~\ref{thm: acip}.
We then set
\beqn
E:=\left\{\hx \in \hM:\ \ell(\hx)\le L \right\}.
\eeqn

By the definition of $\om_n$ in \eqref{def omn},
it is easy to see that when $n\le L$, the measure $\om_n$ is supported on $E$
and thus $\om_n(\hM\backslash E)=0$.
When $n>L$, we consider the Lebesgue standard pair $\cG_0=(M, m)$,
and we denote $r_{M, k}(x):=r_{T^k \cG_0} (T^kx)$ for any $k\ge 0$.
For any $\hx=(x, M)\in \hM_0$,
if $r_{M, n-L}(x)\ge \eps_0$,
i.e., $\dist(T^{n-L} x, \partial D_{n-L})\ge \eps_0$,
where we denote $\hT^{n-L} (\hx)=(T^{n-L} x, D_{n-L})$,
then there is $\alpha\in \xi_L$ such that
$T^{n-L} x\in  W_\alpha$ and $D_{n-L}$ fully contains $W_\alpha$.
It follows that $\hT^n(\hx)\in E$.
By Lemma~\ref{second growth}, and note that $\log_{\theta_0} |M|=0$,
we have
\beqn
\om_n(\hM\backslash E)=m\left(M\backslash \pi\left( \hT^{-n} E \cap \hM_0 \right) \right)
\le m\left(r_{M, n-L}(x)< \eps_0 \right)
\le C_\bp \eps_0^{q_0}<\delta/2.
\eeqn

Now we construct a compact subset $F$ of $E$ as follows.
Note that $E$ can be rewritten as the following disjoint union
$E=\bigcup_{k=0}^{L} E_k$, where each
\beqn
E_k:=\left\{\hx \in \hM:\ \ell(\hx)=k \right\}
\eeqn
consists of countably many intervals.
For each $k\in [0, L]$, we can pick a subset $F_k\subset E_k$
such that $F_k$ is a union of finitely many intervals
and
\beqn
\sigma \left(\pi\left(E_k\backslash F_k \right)\right)<\frac{\delta}{8L},
\ \text{for measures} \ \sigma=\mu, m, T_*m, \dots, T_*^Lm.
\eeqn
Here $\mu$ is the invariant measure that we obtain in Theorem~\ref{thm: acip}.
It is clear that $F=\bigcup_{k=0}^L F_k$ is a compact subset of $E$.
Moreover, by Theorem~\ref{thm: acip},
\beqyn
\om_n(E\backslash F)
\le m\left( \pi\left(\hT^{-n}(E\backslash F)\right)\right)
&=& T^n_*m\left(\pi(E\backslash F)\right) \\
&\le & \sum_{k=0}^L T^n_*m\left(\pi(E_k\backslash F_k)\right) \\
&\le &
\begin{cases}
(L+1)\cdot \frac{\delta}{8L},  & \ \text{if} \ 0\le n\le L, \\
(L+1)\cdot \frac{\delta}{8L} + C_\bc \vartheta_\bc^L,  & \ \text{if} \ n>L.
\end{cases}
\\
&<& \delta/2.
\eeqyn
Therefore, we have
$\om_n(\hM\backslash F)\le \om_n(\hM\backslash E)+\om_n(E\backslash F)<\delta$.
Hence $\om_n$ is tight, so is $\wm_n$.
\end{proof}

Recall that $\mu$ is the invariant measure that we obtain in Theorem~\ref{thm: acip}.
The following is a direct consequence of Lemma~\ref{lem: tight}.

\begin{lemma}\label{lem: projection}
$\wm_n$ has a subsequence converging
weak star to a probability measure $\hmu$ on $\hM$
such that $\pi_*\hmu=\mu$.
\end{lemma}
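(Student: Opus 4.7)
The plan is to deduce the lemma as a routine consequence of the tightness established in Lemma~\ref{lem: tight}, combined with Prokhorov's theorem and the total-variation convergence provided by Theorem~\ref{thm: acip}.

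First, since $\hM$ is a complete separable metric space and $\{\wm_n\}_{n\ge 1}$ is a tight family of Borel probability measures on $\hM$ by Lemma~\ref{lem: tight}, Prokhorov's theorem guarantees the existence of a subsequence $\{\wm_{n_j}\}_{j\ge 1}$ that converges weak-star to some Borel probability measure $\hmu$ on $\hM$. Tightness is crucial here, as it prevents any mass from escaping to infinity in the non-compact tower; without it one could only extract a vague limit that might have total mass strictly less than $1$.

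Next, I would argue that $\pi_*\hmu=\mu$ by pushing the convergence down through $\pi$. Since $\pi:\hM\to M$ is continuous, the pushforward map is continuous in the weak-star topology, and hence $\pi_*\wm_{n_j}\to \pi_*\hmu$ weak-star on $M$. On the other hand, by construction $\pi_*\om_k=T^k_*m$, so
\beqn
\pi_*\wm_n \;=\; \frac{1}{n}\sum_{k=0}^{n-1} T^k_*m.
\eeqn
Applying Theorem~\ref{thm: acip} to the (proper) Lebesgue standard family $\cG_0=(M,m)$, for which $\cZ(\cG_0)=1$, we obtain $\|T^k_*m-\mu\|_{TV}\le C_\bc\vartheta_\bc^k$, so the Cesaro averages also converge to $\mu$ in total variation and in particular weak-star.

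Combining these two convergences and using uniqueness of the weak-star limit on $M$, we conclude $\pi_*\hmu=\mu$, which forces $\hmu(\hM)\ge \mu(M)=1$ so that $\hmu$ is automatically a probability measure. The only step requiring real input is the verification that $\hmu$ has full mass, and this is handled entirely by the tightness in Lemma~\ref{lem: tight}; everything else is standard measure-theoretic bookkeeping, so I do not anticipate any serious obstacle.
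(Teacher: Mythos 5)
Your argument is correct and follows the same route as the paper: tightness (Lemma~\ref{lem: tight}) plus the Helly--Prohorov theorem to extract a weak-star convergent subsequence with a probability-measure limit, then continuity of $\pi$ together with Theorem~\ref{thm: acip} applied to $(M,m)$ to identify $\pi_*\hmu=\mu$. No gaps; the remark about $\hmu(\hM)\ge 1$ is redundant since Prokhorov already yields a probability measure, but this is harmless.
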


\begin{proof}
By Helly-Prohorov theorem, Lemma~\ref{lem: tight} implies that
there is an increasing sequence of natural numbers $\{n_j\}_{j\ge 1}$ such that
$\wm_{n_j}$ converges weak star to a probability measure $\hmu$ on $\hM$.
Applying Theorem~\ref{thm: acip} to the Lebesgue standard pair $\cG_0=(M, m)$,
we have that $T_*^n m$ converges to $\mu$ in total variation,
and hence in the weak star topology as well.
Since
$\pi_* \wm_n=\frac{1}{n}\sum_{k=0}^{n-1}  T_*^k m$, we get
\beqn
\pi_*\hmu=\lim_{j\to \infty} \pi_*\wm_{n_j} = \lim_{j\to \infty} \frac{1}{n_j}\sum_{k=0}^{n_j-1}  T_*^k m=\mu.
\eeqn
Here the above limits are taken in the weak star topology.
\end{proof}

\subsection{Pesin-Sinai decomposition}

In this section, we would like to show that the invariant measure $\mu$ on $M$
is the total measure of a standard family.
By Lemma~\ref{lem: projection},
we shall instead show that the lifted measure $\hmu$ on $\hM$
has the following structure.

\begin{definition}\label{def Pesin-Sinai decomposition}
A probability measure $\hnu$ on $\hM$ is said to have \emph{Pesin-Sinai decomposition}
if the conditional decomposition of $\hnu$
with respect to the countable partition $\hxi=\{\hD\}_{D\in \cD}$
has the following form:
\beqn
\hnu(A)=\sum_{D\in \cD} \ \lambda(D) \cdot \hnu_{D}(A)
\eeqn
for any $A\in \hfB$, where
\begin{itemize}
\item[(1)] $\left\{\lambda(D) \right\}_{D\in \cD}$ is a probability vector on $\cD$,
that is,  $0\le \lambda(D) \le 1$ for
any $D\in \cD$ and $\sum_{D\in \cD} \lambda(D)=1$;
\item[(2)] $\hnu_D$ is a probability measure on $\hD$
such that its projection $(D, \pi_*\hnu_D)$ is a standard pair.
\end{itemize}
\end{definition}

\begin{remark}
Definition~\ref{def Pesin-Sinai decomposition} is motivated by the work \cite{MR721733},
in which Pesin and Sinai used a crucial lemma (Lemma 13 therein) to
construct the u-Gibbs measure of partially hyperbolic attractors.
We adapt their notions in our setting.
\end{remark}

If $\hmu$ has Pesin-Sinai decomposition,
then by Lemma~\ref{lem: projection},
it is easy to see that $\mu$ is carried by a standard family.
To this end, we need the following lemma,
which may be regarded as a variant of Lemma 13 of \cite{MR721733}.

\begin{lemma}\label{lem: Pesin-Sinai}
Let $\hnu_n$ be a sequence of probability measures on $\hM$
with the following properties:
\begin{itemize}
\item[(1)]
each $\hnu_n$ has Pesin-Sinai decomposition
$\hnu_n=\sum_{D\in \cD} \lambda_n(D) \cdot \hnu_{n, D}$;
\item[(2)]
let $\rho_{n, D}$ be the density of the standard pair $(D, \pi_*\hnu_{n, D})$,
and assume that $\rho_{n, D}$ converges uniformly in $D$
to a continuous function $\rho_D$ as $n\to \infty$;
\item[(3)] the sequence of measures $\frac{1}{n_j} \sum_{k=0}^{n_j} \hnu_{n_k}$
converges weakly to a measure $\hnu$ on $\hM$, where $n_j$ is a subsequence of natural numbers.
\end{itemize}
Then the measure $\hnu$ has Pesin-Sinai decomposition
$\hnu=\sum_{D\in \cD} \ \lambda(D) \cdot \hnu_{D}$,
such that the density of $(D, \pi_*\hnu_D)$ is exactly given by $\rho_D$.
\end{lemma}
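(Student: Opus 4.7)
The plan is to exploit the clopen structure of the partition $\hxi=\{\hD\}_{D\in\cD}$: each $\hD$ is an identical copy of the closed interval $\oD$ living in its own component of the disjoint-union metric on $\hM$, hence is both compact and clopen. In particular, the Portmanteau theorem gives equality (not just inequality) for weak limits on every $\hD$. Define
\[
\lambda(D):=\hnu(\hD),\qquad D\in\cD.
\]
From hypothesis (3) and the clopen property of $\hD$,
\[
\lambda(D)=\lim_{j\to\infty}\frac{1}{n_j}\sum_{k=0}^{n_j}\hnu_{n_k}(\hD)=\lim_{j\to\infty}\frac{1}{n_j}\sum_{k=0}^{n_j}\lambda_{n_k}(D).
\]
Since $\hM=\bigsqcup_{D\in\cD}\hD$ is a countable disjoint union and $\hnu$ is a probability measure (as is the case in the intended application via Lemma~\ref{lem: projection}), countable additivity yields $\sum_D\lambda(D)=\hnu(\hM)=1$.

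Next, I would identify the conditional density on each $\hD$ by testing against continuous functions. For $\varphi\in C(\oD)$, the lift $f:=\varphi\circ(\pi|_{\hD})$ extended by zero is bounded and continuous on $\hM$ because $\hD$ is clopen. The Pesin--Sinai decomposition of $\hnu_n$ yields
\[
\int_{\hM}f\,d\hnu_{n_k}=\lambda_{n_k}(D)\int_{\oD}\varphi\,\rho_{n_k,D}\,dm,
\]
and Cesaro-averaging combined with weak convergence gives
\[
\int_{\hM}f\,d\hnu=\lim_{j\to\infty}\int_{\oD}\varphi\cdot\Big(\frac{1}{n_j}\sum_{k=0}^{n_j}\lambda_{n_k}(D)\,\rho_{n_k,D}\Big)\,dm.
\]
By hypothesis (2), $\rho_{n,D}\to\rho_D$ uniformly on $\oD$; writing $\lambda_{n_k}(D)\rho_{n_k,D}=\lambda_{n_k}(D)\rho_D+\lambda_{n_k}(D)(\rho_{n_k,D}-\rho_D)$ and using $\lambda_{n_k}(D)\in[0,1]$, the Cesaro averages inside the integral converge uniformly on $\oD$ to $\lambda(D)\rho_D$: the first piece tends to $\lambda(D)\rho_D$ by the identity for $\lambda(D)$ just established, and the second is bounded by the Cesaro average of $\|\rho_{n_k,D}-\rho_D\|_\infty$, which vanishes. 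Passing this uniform convergence through the integral on the bounded set $\oD$ gives $\int f\,d\hnu=\lambda(D)\int_{\oD}\varphi\,\rho_D\,dm$, so $\pi_*\hnu_D$ has density $\rho_D$ whenever $\lambda(D)>0$; if $\lambda(D)=0$ the contribution $\lambda(D)\hnu_D$ vanishes and $\hnu_D$ can be chosen consistent with the density $\rho_D$.

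Finally, I would verify the standard pair condition for each $(D,\pi_*\hnu_D)$. By Lemma~\ref{lem: sp property} every $\rho_{n,D}$ satisfies $\rho_{n,D}\ge e^{-C_\br}|D|^{-1}$, a positive lower bound preserved in the uniform limit; hence $\log\rho_{n,D}\to\log\rho_D$ uniformly on $\oD$. Letting $n\to\infty$ in the pointwise regularity estimate $|\log\rho_{n,D}(x)-\log\rho_{n,D}(y)|\le C_\br\bgamma^{\bs(x,y)}$, valid for all $x,y\in D\setminus\cS_\infty$, yields $|\log\rho_D|_{D,\bgamma}\le C_\br$, confirming that $(D,\pi_*\hnu_D)$ is a standard pair. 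The main obstacle is the middle step: interchanging the weak limit with the integral against $\varphi$ on a single $\hD$ inside the non-compact tower $\hM$. This is overcome precisely because the partition consists of compact clopen pieces, so no mass can leak between components, and the argument reduces to uniform convergence of positive continuous densities on a single closed interval.
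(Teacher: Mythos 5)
Your proof is correct and supplies precisely the details the paper omits: the authors defer to Lemma~13 of Pesin--Sinai and cite only the two needed ingredients (uniform limits of regular densities are regular, and probability vectors on $\cD$ behave well under limits), both of which your argument establishes cleanly via the clopen/compact structure of the pieces $\hD$ and testing against continuous functions lifted from a single $\oD$. The only caveat, which you already flag, is that the lemma as stated says ``a measure $\hnu$'' while the conclusion requires $\sum_{D\in\cD}\lambda(D)=1$, i.e.\ that no mass escapes to infinity in the tower; this is guaranteed in the intended application by the tightness of Lemma~\ref{lem: tight} and by Lemma~\ref{lem: projection}.
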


The proof of Lemma~\ref{lem: Pesin-Sinai} is almost
the same as that of Lemma 13 in \cite{MR721733},
by noticing that the uniform limit of regular density is still regular,
as well as that the space of probability vectors
on $\cD$ is weakly compact. Hence we omit the proof here.
In the rest of this subsection, we prove

\begin{lemma}\label{lem: decomposition}
$\hmu$ has Pesin-Sinai decomposition.
\end{lemma}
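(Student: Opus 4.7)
The plan is to verify the hypotheses of Lemma~\ref{lem: Pesin-Sinai} along the subsequence $\wm_{n_j} \to \hmu$ produced in Lemma~\ref{lem: projection}, after possibly refining it further. Thus I need to (i) show each $\om_n$, and hence each $\wm_n$, already has a Pesin-Sinai decomposition, and (ii) extract a further subsequence along which the conditional densities converge uniformly on each atom $\hD\in\hxi$.

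For (i), I would start from the Lebesgue standard pair $\cG_0 = (M, m)$ and apply Lemma~\ref{lem: forward iterates of sf} to get that $T^n\cG_0$ is a standard family. Using the definition of the Hofbauer tower, the lift $\om_n$ of $T^n_* m$ decomposes by $\hxi$: for each $D\in\cD$ the atom $\hD$ collects contributions from exactly those cells $W_\alpha \in \xi_n$ whose image $T^n W_\alpha$ equals $D$ with the correct combinatorial history, and each such contribution is a standard pair supported on $D$. Merging these countably many standard pairs by Lemma~\ref{lem: mergence} yields a single standard pair $(D, \pi_*\hnu_{n,D})$, so $\om_n = \sum_{D\in\cD} \lambda_n(D)\,\hnu_{n,D}$ is a Pesin-Sinai decomposition. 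A further convex combination gives the decomposition of $\wm_n = \sum_{D\in\cD} \bar\lambda_n(D)\,\bar\nu_{n,D}$, where $\bar\lambda_n(D) = \frac{1}{n}\sum_{k=0}^{n-1}\lambda_k(D)$ and $(D, \pi_*\bar\nu_{n,D})$ is a standard pair by another application of Lemma~\ref{lem: mergence}.

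For (ii), I would use the uniform control on standard pair densities from Lemma~\ref{lem: sp property}: for each fixed $D$, the densities $\bar\rho_{n,D}$ of $\pi_*\bar\nu_{n,D}$ are uniformly bounded by $e^{C_\br}/|D|$ and satisfy a uniform dynamical H\"older estimate. Together with the fact that the separation time $\bs$ dominates Euclidean distance on $D$ (up to a uniform factor, since $T$ is expanding on $M\setminus \cS_1$), this yields genuine equicontinuity of $\{\bar\rho_{n,D}\}_n$ on $\oD$. Arzel\`a-Ascoli then supplies, for each $D$, a uniformly convergent subsequence with continuous limit $\rho_D$, and a diagonal argument along the countable collection $\cD$ produces a single subsequence $n_{j_i}$ on which $\bar\rho_{n_{j_i}, D} \to \rho_D$ uniformly on $\oD$ for every $D\in\cD$. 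Along this refined subsequence, the measures $\wm_{n_{j_i}}$ still converge weak star to $\hmu$ by Lemma~\ref{lem: projection}.

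All hypotheses of Lemma~\ref{lem: Pesin-Sinai} are then met, so $\hmu$ has Pesin-Sinai decomposition. The main obstacle I anticipate is the equicontinuity step: standard pair regularity is phrased in the separation-time metric, so I need to ensure that comparisons on $\oD$ translate into a genuine modulus of continuity uniform in $n$. This is where Assumption \textbf{(H1)} at $q_0$-scale and the explicit bound $e^{C_\br}\bgamma^{\bs(x,y)}/|D|$ in Lemma~\ref{lem: sp property} do the work, provided one observes that two points in $\oD$ that are close in Euclidean distance have large separation time relative to the partition refinement inside $D$.
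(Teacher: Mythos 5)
Your step (i) matches the paper: the paper likewise lifts $T^n_*m$ to $\om_n$, groups the standard pairs of $T^n\cG_0$ over each $D\in\cD$ with $T^nW_\alpha=D$, and merges them via Lemma~\ref{lem: mergence} to get the Pesin--Sinai decomposition of $\om_n$. The gap is in step (ii). Condition (2) of Lemma~\ref{lem: Pesin-Sinai} asks for uniform convergence of the densities to a continuous limit, and you propose to get it from equicontinuity plus Arzel\`a--Ascoli. But the regularity supplied by Lemma~\ref{lem: sp property} is $|\rho(x)-\rho(y)|\le C_\br e^{C_\br}|D|^{-1}\bgamma^{\bs(x,y)}$, and the observation you rely on --- that Euclidean-close points of $\oD$ have large separation time --- is false. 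Two points lying on opposite sides of a point of $\cS_1\cap D$ have $\bs(x,y)=1$ no matter how close they are, and since $T$ has countably many branches a typical $D$ contains infinitely many such points (possibly accumulating). So the dynamical H\"older bound gives no Euclidean modulus of continuity, the family $\{\rho_{n,D}\}_n$ is not equicontinuous on $\oD$ in any metric for which $\oD$ is compact, and Arzel\`a--Ascoli does not apply. Dynamically H\"older densities need not even be continuous.

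The paper avoids compactness altogether and proves the uniform convergence of $\rho_{n,D}$ constructively: it writes $\rho_{n,D}$ as a ratio built from the kernels $p_{n,\alpha}(x,y)=\prod_{k=1}^{n}|T'(y_{\alpha|_k})|/|T'(x_{\alpha|_k})|$, splits the index set $\cA_n(D)$ into the indices extendable to $\cA_\infty(D)$ and the non-extendable ones $\cA_n'(D)$, shows $p_{n,\alpha|_n}\to p_\alpha$ uniformly on $D\times D$ using Assumption \textbf{(H2)} (here the separation times of the \emph{preimages} increase with $k$, which is where the H\"older control genuinely helps), and shows $\sum_{\alpha\in\cA_n'(D)}m(W_\alpha)\to 0$ using Theorem~\ref{thm: equi}. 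This identifies an explicit limit $\rho_D$ and gives the uniform convergence required by condition (2). You would need to supply an argument of this constructive type (or some other selection principle not based on Euclidean equicontinuity) to close your proof; as written, the key analytic step fails.
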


\begin{proof}
It suffices to show that $\{\om_n\}_{n\ge 0}$
satisfies the first two conditions of Lemma~\ref{lem: Pesin-Sinai},
since the third condition is already shown by Lemma~\ref{lem: projection}.

Recall that $\pi_* \om_n=T_*^n m$ and for any $D\in \cD$,
the interval $\hD$ is an identical copy of $D$ via the projection $\pi$.
Consider the Lebesgue standard pair $\cG_0=(M, m)$, then
$T_*^n m$ is exactly carried by the standard family
\beq\label{def TnG0}
T^n \cG_0 =\sum_{\alpha\in M/\xi_n} m(W_\alpha) \cdot (T^n W_\alpha, \nu_\alpha),
\eeq
where we denote $\xi_n=\{W_\alpha\}_{\alpha\in M/\xi_n}$
and set $\nu_\alpha(\cdot):=m(T^{-n}(\cdot) |W_\alpha)$.
By the construction in Section~\ref{sec: Hofbauer}, it is easy to see that each $T^nW_\alpha\in \cD_n\subset \cD$.
Note that it is possible that $T^nW_\alpha=T^nW_{\alpha'}$ for distinct index $\alpha$ and $\alpha'$.
We would like show that $T^n\cG_0$ is equivalent to a standard family of the form
\beq\label{def TnG01}
T^n \cG_0 \equiv \sum_{D\in \cD} \lambda_n(D) \cdot (D, \nu_{n, D}).
\eeq
To this end, we need to combine standard pairs of \eqref{def TnG0}
over the same interval $D$ as follows.
For simplicity, write $\cA_n:=M/\xi_n$ and for any $D\in \cD$, set
\beqn
\cA_n(D):=\left\{\alpha\in \cA_n:\ T^n W_\alpha=D \right\}.
\eeqn
For any $D\in \cD$ and any $n\in \IN$, if $\cA_n(D)\ne \emptyset$, we define
\beq\label{def lambda nu}
\lambda_n(D)=\sum_{\alpha\in \cA_n(D)} m(W_\alpha) \ \
\text{and} \
\ \nu_{n, D}=\dfrac{\sum_{\alpha\in \cA_n(D)}
m(W_\alpha)\cdot \nu_\alpha}{\sum_{\alpha\in \cA_n(D)} m(W_\alpha)};
\eeq
otherwise, we let $\lambda_n(D)=0$ and $\nu_{n, D}=m_D$.
By Lemma~\ref{lem: mergence}, the pair $(D, \nu_{n, D})$ is a standard pair.
In this way, we obtain the equivalent standard family given by the RHS of \eqref{def TnG01},
whose total measure is $T^n_*m$. By lifting $T^n_*m$ to $\om_n$ and
noting that $\pi^{-1}|_D: D\to \hD$ is trivial, we set $\hnu_{n, D}=(\pi^{-1}|_D)_*\nu_{n, D}$,
then $\om_n$ has Pesin-Sinai decomposition
$\om_n=\sum_{D\in \cD} \lambda_n(D) \cdot \hnu_{n, D}$,
that is, $\{\om_n\}_{n\ge 0}$
satisfies Condition (1) of Lemma~\ref{lem: Pesin-Sinai}.

Before we verify Condition (2) of Lemma~\ref{lem: Pesin-Sinai}, we introduce the following notations.
For any $\alpha\in \cA_n$, we can associated a symbolic representation
$\alpha=(\alpha_{-1}, \alpha_{-2}, \dots, \alpha_{-n})\in (M/\xi_1)^n$ such that
$W_\alpha:=\bigcap_{k=1}^n T^{k-n} W_{\alpha_{-k}}$.
We further denote $\cA_\infty$ the inverse limit space of $\{\cA_n\}_{n\ge 1}$, that is,
if $\alpha=(\alpha_{-1}, \alpha_{-2}, \dots)\in \cA_\infty$, then $W_{\alpha|_n}\ne \emptyset$ for all $n$,
where $\alpha|_n:=(\alpha_{-1}, \alpha_{-2}, \dots, \alpha_{-n})$ is the truncation of first $n$-words of $\alpha$.
For any $D\in \cD$, we set
\beqn
\cA_\infty(D):=\left\{\alpha\in \cA_\infty:\ \alpha|_n\in \cA_n(D) \ \text{for any} \ n \right\}.
\eeqn
Note that $\cA_\infty(D)\ne \emptyset$ for some $D\in \cD$, due to the
Hofbauer tower construction and Assumption \textbf{(H3)}.
We also denote the subset $\cA_n'(D)$ of $\cA_n(D)$ such that
any $\alpha\in \cA_n'(D)$ cannot be extended to an element in $\cA_\infty(D)$.

Now we are ready to check Condition (2) of Lemma~\ref{lem: Pesin-Sinai}.
We shall only show the case given by \eqref{def lambda nu}, since the other case
is trivial. Suppose that $\cA_n(D)\ne \emptyset$.
For any $\alpha=(\alpha_{-1}, \dots, \alpha_{-n})\in \cA_n(D)$,
let $\nu_\alpha$ be the probability measure given by \eqref{def TnG0}.
Similar to \eqref{density formula}, the density of $\nu_\alpha$ is given by
\beqn
\rho_\alpha(x) = \dfrac{1}{m(W_\alpha)} \frac{1}{\left|(T^n)'(x_\alpha)\right|},
\eeqn
where $x_\alpha:=\left(T^n|_{W_\alpha}\right)^{-1}(x)$, i.e.,
$x_\alpha$ is the $n$-th preimage of $x$ in $W_\alpha$.
Alternatively, we define
\beq\label{def p kernel}
p_{n, \alpha}(x, y):=\dfrac{\left|(T^n)'(y_\alpha)\right|}{\left|(T^n)'(x_\alpha)\right|}
=\prod_{k=1}^{n} \frac{\left|T'(y_{\alpha|_{k}})\right|}{\left|T'(x_{\alpha|_{k}})\right|}
\eeq
for all $(x, y)\in D\times D$, and we notice that
\beqn
\dfrac{1}{\left|(T^n)'(x_\alpha)\right|}=\int_D p_{n, \alpha}(x, y) dm(y),
\
m(W_\alpha)=\iint_{D^2} p_{n, \alpha}(x, y) dm(x) dm(y).
\eeqn
Therefore, the measures
$\nu_{n, D}$ given by \eqref{def lambda nu} has density
{\allowdisplaybreaks
\beqyn\label{def rho n D}
\rho_{n, D}(x)
&= &
\dfrac{\sum_{\alpha\in \cA_n(D)} m(W_\alpha)\cdot \rho_\alpha}{\sum_{\alpha\in \cA_n(D)} m(W_\alpha)} \nonumber \\
&=&
\dfrac{\sum\limits_{\alpha\in \cA_\infty(D)}
\int_D p_{n, \alpha|_n}(x, y) dm(y) + \sum\limits_{\alpha\in \cA_n'(D)}  m(W_\alpha)\cdot \rho_\alpha}{\sum\limits_{\alpha\in \cA_\infty(D)} \iint_{D^2} p_{n, \alpha|_n}(x, y) dm(x) dm(y) +\sum\limits_{\alpha\in \cA_n'(D)}  m(W_\alpha) } \label{def rho n D}
\eeqyn
}for any $x\in D$.
On the one hand, for any $\alpha\in \cA_\infty(D)$,
by Assumption \textbf{(H2)} and the formula \eqref{def p kernel},
it is not hard to see that $p_{n, \alpha|_n}$ uniformly converges
to $p_\alpha$ on $D\times D$, where
\beqn
p_{\alpha}(x, y):=\prod_{k=1}^{\infty} \frac{\left|T'(y_{\alpha|_{k}})\right|}{\left|T'(x_{\alpha|_{k}})\right|},
\ \text{for any} \ (x, y)\in D\times D.
\eeqn
On the other hand, we claim that $\sum_{\alpha\in \cA_n'(D)}  m(W_\alpha)\to 0$ as $n\to \infty$.
Indeed,  $\cA_n'(D)=\bigcup_{k>n} \cA_{n, k}'(D)$,
where $\cA_{n, k}'(D)$ consists of all $\alpha\in  \cA_n'(D)$ which cannot be extended to an element
in $\cA_k(D)$.
Note that the sets $\cA_{n, k}'(D)$ is increasing in $k$.
Since $\cG_0=(M, m)$ and all its iterates $T^k_*\cG_0$ are proper standard families,
we apply Theorem~\ref{thm: equi} to $\cG_0$ and $T^k_*\cG_0$ and get
\beqn
\sum_{\alpha\in \cA_{n, k}'(D)}  m(W_\alpha)
\le \left|T^n_*m(D)-T^k_*m(D)\right|
\le 2 (1-\Theta_\bc)^{n/N_\bc}
\eeqn
Since $k$ is arbitrary, we have
$\sum_{\alpha\in \cA_n'(D)}  m(W_\alpha)\to 0$ as $n\to \infty$.
By the above two observations, we conclude that
$\rho_{n, D}$ uniformly converges to
\beqn
\rho_{D}(x):=
\dfrac{\sum_{\alpha\in \cA_\infty(D)} \int_D p_{n, \alpha|_n}(x, y) dm(y) }{\sum_{\alpha\in \cA_\infty(D)} \iint_{D^2} p_{n, \alpha|_n}(x, y) dm(x) dm(y)}.
\eeqn
This completes the verification of Condition (2) of Lemma~\ref{lem: Pesin-Sinai},
and hence $\hmu$ has Pesin-Sinai decomposition.
\end{proof}

\subsection{Proof of Theorem~\ref{thm: density}}

In the previous subsections, we have shown that
the measure $T^n_*m$ is lifted to the measure $\om_n$ given by \eqref{def omn},
which has Pesin-Sinai decomposition.
So its Cesaro mean $\wm_n$ also has Pesin-Sinai decomposition, say,
\beqn
\wm_n=\sum_{D\in \cD} \eta_n(D) \cdot \homega_{n, D}.
\eeqn
For any $\eps>0$, we set
$
\cD_\eps:=\left\{D\in \cD: \ |D|<\eps\right\}.
$
Consider the Lebesgue standard pair  $\cG_0=(M, m)$,
and denote $r_{M, k}(x):=r_{T^k \cG_0} (T^kx)$ for any $k\ge 0$.
By Lemma~\ref{second growth}, we have
\beqn
\sum_{D\in \cD_\eps} \eta_n(D)
\le \frac{1}{n} \sum_{k=0}^{n-1} m\left( r_{M, k}(x) < \eps \right) <C_\bp \eps^{q_0}.
\eeqn

By Lemma~\ref{lem: projection} and Lemma~\ref{lem: decomposition},
there is a subsequence $n_j$ such that $\wm_{n_j}\to \hmu$ in the weak star topology.
Moreover, $\hmu$ has Pesin-Sinai decomposition, say,
\beq\label{hmu P-S}
\hmu=\sum_{D\in \cD} \eta(D) \cdot \hmu_{D}.
\eeq
Moreover, the sequence of probability vectors $\{\eta_{n_j}(D)\}_{D\in\cD}$ converges to
the probability vector $\{\eta(D)\}_{D\in\cD}$ in the weak star topology as $j\to \infty$.
Therefore,
\beq\label{sum eta}
\sum_{D\in \cD_\eps} \eta(D)=\lim_{j\to \infty} \sum_{D\in \cD_\eps} \eta_{n_j}(D)<C_\bp \eps^{q_0}.
\eeq

Now we proceed the proof of Theorem~\ref{thm: density}. Since $\pi_*\hmu=\mu$,
by \eqref{hmu P-S}, the density $h=\frac{d\mu}{dm}$ is given by
\beqn
h=\sum_{D\in \cD} \eta(D) \cdot  \dfrac{d(\pi_*\hmu_D)}{dm}
=:\sum_{D\in \cW_h} h_D,
\eeqn
where we set
$\cW_h:=\left\{ D\in \cD: \ \eta(D)>0\right\}$
and
$h_D=  \eta(D) \cdot  \frac{d(\pi_*\hmu_D)}{dm}$.
Since $(D, \pi_*\hmu_D)$ is a standard pair, by Lemma~\ref{lem: sp property},
we have $\frac{d(\pi_*\hmu_D)}{dm}$ has $L^\infty$-norm bounded by $e^{C_\br}|D|^{-1}$
and dynamically H\"older semi-norm bounded by $C_\br e^{C_\br}|D|^{-1}$.
Hence for any $D\in \cW_h$, we have
\beqn
\left\| h_D \right\|_{D, \bgamma} \le (1+C_\br)e^{C_\br} \eta(D) |D|^{-1}.
\eeqn
For any $s\in (1-\oq, 1]$, as the choice of $q_0$ is flexible and can be arbitrarily close to $\oq$,
it is not harm to assume that $s>1-q_0$. Then we have
\beqn
\left\| h \right\|_{\cW_h, \bgamma, s}
= \sum_{D\in \cW_h} |D|^{s} \left\| h_D \right\|_{D, \bgamma}
\le (1+C_\br)e^{C_\br}  \sum_{D\in \cD} \eta(D) |D|^{s-1}<\infty.
\eeqn
The above convergence is shown as follows: we set $\Gamma_n=\cD_{2^{-n}}\backslash \cD_{2^{-n-1}}$,
by \eqref{sum eta}, we get
{\allowdisplaybreaks
\beqyn
\sum_{D\in \cD} \eta(D) |D|^{s-1}
= \sum_{n=0}^\infty \sum_{D\in \Gamma_n} \eta(D) |D|^{s-1}
&\le& \sum_{n=0}^\infty  2^{(1-s)(n+1)} \sum_{D\in \cD_{2^{-n}}} \eta(D) \\
&\le& \sum_{n=0}^\infty 2^{(1-s)(n+1)}  \cdot C_\bp \left(2^{-n}\right)^{q_0} \\
&=& C_\bp 2^{1-s} \sum_{n=0}^\infty 2^{n(1-q_0-s)}<\infty.
\eeqyn
}This completes the proof of Theorem~\ref{thm: density}.

\section{Proof of Theorem~\ref{thm: mixing}}%\label{sec: proof3}

We first show that the system is exponential mixing with respect to the Lebesgue measure, that is,

\begin{lemma}\label{lem: Leb mixing}
For any $t\in [0, 1)$, we choose a scale $q_0\le \min\{\oq, 1-t\}$ satisfying \eqref{def theta_0}.
Then for any $f\in \cH_{\cW, \bgamma, t}$ on some collection $\cW$ of countably many intervals
and for any $g\in L^\infty(m)$, we have
\beqn
\left| \int f  g\circ T^n   dm - \int f  dm \int g  d\mu \right|\le
6C_\bc \vartheta_\bc^{n}  \|f\|_{\cW, \bgamma, t} \|g\|_{\infty}.
\eeqn
Here constants $C_\bc$ and $\vartheta_\bc$ are given by Theorem~\ref{thm: acip}.
\end{lemma}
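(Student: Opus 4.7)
The plan is to reduce the decay estimate to the coupling bound in Theorem~\ref{thm: acip} by expressing $f\cdot m$ as a signed combination of standard pair measures. First I would use the series decomposition $f=\sum_{\alpha\in\cA}f_\alpha$ with $f_\alpha\in\cH_{W_\alpha,\bgamma}$, so that
\[
\int f\,g\circ T^n\,dm =\sum_\alpha \int_{W_\alpha} f_\alpha\, g\circ T^n\,dm.
\]
For each $\alpha$, I would shift by $c_\alpha:=\|f_\alpha\|_\infty+|f_\alpha|_{W_\alpha,\bgamma}$ to form the nonnegative density $\tilde f_\alpha:=c_\alpha+f_\alpha$, which satisfies $\tilde f_\alpha\ge |f_\alpha|_{W_\alpha,\bgamma}$ and $|\tilde f_\alpha(x)-\tilde f_\alpha(y)|\le |f_\alpha|_{W_\alpha,\bgamma}\bgamma^{\bs(x,y)}$. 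Dividing these two bounds gives $|\log\tilde f_\alpha|_{W_\alpha,\bgamma}\le 1\le C_\br$, so after normalization $\nu_\alpha:=(\int\tilde f_\alpha\,dm)^{-1}\tilde f_\alpha\,m|_{W_\alpha}$ the pair $(W_\alpha,\nu_\alpha)$ is a standard pair. The Lebesgue pair $(W_\alpha,m_{W_\alpha})$ is also a standard pair. Both have $\cZ=|W_\alpha|^{-q_0}$.

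Setting $a_\alpha:=\int_{W_\alpha}\tilde f_\alpha\,dm$ and $b_\alpha:=c_\alpha|W_\alpha|$, I would rewrite each summand as
\[
\int_{W_\alpha} f_\alpha\, g\circ T^n\,dm
= a_\alpha\!\int g\,dT^n_*\nu_\alpha\;-\;b_\alpha\!\int g\,dT^n_*m_{W_\alpha},
\]
and since $a_\alpha-b_\alpha=\int f_\alpha\,dm$, subtracting $\int f\,dm\int g\,d\mu$ produces
\[
\sum_\alpha\Big[a_\alpha\!\int g\,d(T^n_*\nu_\alpha-\mu)\;-\;b_\alpha\!\int g\,d(T^n_*m_{W_\alpha}-\mu)\Big].
\]
Now I would apply Theorem~\ref{thm: acip} to each of the two single-pair standard families, which yields
$\|T^n_*\nu_\alpha-\mu\|_{TV}\le C_\bc\vartheta_\bc^n|W_\alpha|^{-q_0}$ and the analogous bound for $m_{W_\alpha}$. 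Combined with $|\int g\,d\sigma|\le\|g\|_\infty\|\sigma\|_{TV}$, the $\alpha$-th term is bounded by $(a_\alpha+b_\alpha)\|g\|_\infty C_\bc\vartheta_\bc^n|W_\alpha|^{-q_0}$.

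Finally, since $\tilde f_\alpha\le 2c_\alpha$ gives $a_\alpha\le 2b_\alpha$, and $c_\alpha\le \|f_\alpha\|_{W_\alpha,\bgamma}$, one has $a_\alpha+b_\alpha\le 3\|f_\alpha\|_{W_\alpha,\bgamma}|W_\alpha|$. Using $|W_\alpha|\le 1$ and the hypothesis $q_0\le 1-t$, so that $|W_\alpha|^{1-q_0}\le |W_\alpha|^{t}$, I would sum over $\alpha$ to obtain
\[
\sum_\alpha(a_\alpha+b_\alpha)|W_\alpha|^{-q_0}\le 3\sum_\alpha\|f_\alpha\|_{W_\alpha,\bgamma}|W_\alpha|^{t}=3\|f\|_{\cW,\bgamma,t},
\]
yielding the claimed bound (with constant $3C_\bc\le 6C_\bc$). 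The only mild subtlety I anticipate is justifying the choice $q_0\le\min\{\oq,1-t\}$ so that $|W_\alpha|^{1-q_0}\le|W_\alpha|^t$; the rest is a careful but routine bookkeeping of constants in the shift-and-normalize step.
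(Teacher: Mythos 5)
Your argument is correct and is essentially the paper's own proof: both shift each $f_\alpha$ by a constant multiple of $\|f_\alpha\|_{W_\alpha,\bgamma}$ to produce regular densities, realize $f\,dm$ as a difference of standard-family measures, invoke Theorem~\ref{thm: acip}, and use $1-q_0\ge t$ together with $|W_\alpha|\le 1$ to sum the $\cZ$-contributions (the paper merely aggregates the pairs into two standard families and shifts by $2K_\alpha$ rather than $K_\alpha$, which is cosmetic). The only point to patch is the degenerate case where $f_\alpha$ is a negative constant, so that $\tilde f_\alpha\equiv 0$ and $\nu_\alpha$ is undefined; there $a_\alpha=0$ and that term simply drops out.
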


\begin{remark}
Note that the choice of $q_0$ in \eqref{def theta_0} is quite flexible.
It is not hard to see from the proof of Theorem~\ref{thm: acip},
the constants $C_\bc$ and $\vartheta_\bc$ only depend on
the choice of $q_0$, $\delta_0$ and the magnet interval $U$.
As $\delta_0$ and $U$ are fixed but $q_0$ varies, $C_\bc$ and $\vartheta_\bc$ would also vary
depending on the value of $q_0$.
\end{remark}

\begin{proof}[Proof of Lemma~\ref{lem: Leb mixing}]
Without loss of generality, given a function $f\in \cH_{\cW, \bgamma, t}$,
we may assume that $\cW=\{W_\alpha:  \alpha\in \cA\}$
and $f=\sum_{\alpha\in \cA} f_\alpha$ such that
$f_\alpha\not\equiv 0$ on each sub-interval $W_\alpha$.
We define on each $W_\alpha$
two finite measures $\wnu_\alpha^1$ and $\wnu_\alpha^2$
such that their densities are given as follows:
\beqn
\frac{d\wnu_\alpha^1}{dm}=f_\alpha+2K_\alpha \ \
\text{and} \ \ \frac{d\wnu_\alpha^2}{dm}=2K_\alpha,
\eeqn
where $K_\alpha=\|f_\alpha\|_{W_\alpha, \bgamma}>0$.
Note that $\frac{d\wnu_\alpha^1}{dm}\in [K_\alpha, 3K_\alpha]$.
Then we define two families $\cG^i=\sum_{\alpha\in \cA} \lambda_\alpha^i (W_\alpha, \nu_\alpha^i)$,
$i=1, 2$, by
\beqn
\nu_\alpha^i(\cdot)=\wnu_\alpha^i(\cdot\ | W_\alpha), \ \ \text{and} \ \
\lambda_\alpha^i=\frac{\wnu_\alpha^i(W_\alpha)}{\sum_{\alpha\in \cA} \wnu_\alpha^i(W_\alpha)}.
\eeqn
We first show that $\cG^1$ is a standard family in $\fF$. For any $x, y\in W_\alpha$,
{\allowdisplaybreaks
\beqyn
\left| \log \frac{d\nu_\alpha^1}{dm}(x) -  \log \frac{d\nu_\alpha^1}{dm}(y)\right|
&=& \left| \log \frac{f_\alpha(x)+2K_\alpha}{f_\alpha(y)+ 2K_\alpha} \right| \\
&\le & \log \left( 1+ \frac{|f_\alpha(x)-f_\alpha(y)|}{\min\{f_\alpha(x), f_\alpha(y)\}+2K_\alpha} \right) \\
&\le & \frac{|f_\alpha(x)-f_\alpha(y)|}{\min\{f_\alpha(x), f_\alpha(y)\}+2K_\alpha}  \\
&\le & \frac{|f_\alpha|_{W_\alpha, \bgamma} \bgamma^{\bs(x,y)}}{\|f_\alpha\|_{W_\alpha, \bgamma}}
\le C_\br \bgamma^{\bs(x,y)}.
\eeqyn
}Hence each $(W_\alpha, \nu_\alpha^1)$ is a standard pair, and thus $\cG^1$ is a standard family.
Further,  since $\wnu_\alpha^1(W_\alpha)\le 3K_\alpha |W_\alpha|$ and $t\le 1-q_0$, we have
{\allowdisplaybreaks
\beqyn
\cZ(\cG^1)=\sum_{\alpha\in \cA} \lambda_\alpha^1 |W_\alpha|^{-q_0} =
\frac{ \sum_{\alpha\in \cA} \wnu_\alpha^1 (W_\alpha) |W_\alpha|^{-q_0}}{\sum_{\alpha\in \cA} \wnu_\alpha^1(W_\alpha)}
&\le &\frac{3\sum_{\alpha\in \cA} K_\alpha |W_\alpha|^{1-q_0}}{\sum_{\alpha\in \cA} \wnu_\alpha^1(W_\alpha)} \\
&\le &\frac{3 \|f\|_{\cW, \bgamma, t}}{\sum_{\alpha\in \cA} \wnu_\alpha^1 (W_\alpha)} <\infty.
\eeqyn
}Similarly, we can show that $\cG^2$ is a standard family and
\beqn
\cZ(\cG^2)\le \frac{3 \|f\|_{\cW, \bgamma, t}}{\sum_{\alpha\in \cA} \wnu_\alpha^2 (W_\alpha)} <\infty.
\eeqn
By Theorem~\ref{thm: acip},
$
\left\|T^n_*\nu_{\cG^i} - \mu \right\|_{TV}
\le C_\bc \vartheta_\bc^n \cZ(\cG^i),
$
$i=1, 2$, which implies that
\beqn
\left\|T^n_*\left(\sum_{\alpha\in \cA} \wnu_\alpha^i \right)
- \left(\sum_{\alpha\in \cA} \wnu_\alpha^i(W_\alpha) \right) \mu \right\|_{TV}
\le 3 C_\bc \vartheta_\bc^n \|f\|_{\cW, \bgamma, t}.
\eeqn
Therefore, for any $g\in L^\infty$,
{\allowdisplaybreaks
\beqyn
& & \left| \int f g\circ T^n   d m - \int f  d m \int g  d\mu \right| \\
&=& \left|T^n_*\left(\sum_{\alpha\in \cA} \left(\wnu_\alpha^1 -\wnu_\alpha^2 \right)\right)(g)
- \left(\sum_{\alpha\in \cA} \left( \wnu_\alpha^1(W_\alpha) - \wnu_\alpha^2(W_\alpha)\right)\right) \mu(g)\right| \\
&\le & \left\| \left[T^n_*\left(\sum_{\alpha\in \cA} \wnu_\alpha^1 \right)
- \left(\sum_{\alpha\in \cA} \wnu_\alpha^1(W_\alpha) \right) \mu \right] - \right. \\
& & \ \ \left. \left [T^n_*\left(\sum_{\alpha\in \cA} \wnu_\alpha^2 \right)
- \left(\sum_{\alpha\in \cA} \wnu_\alpha^2(W_\alpha) \right) \mu \right] \right\|_{TV} \|g\|_{\infty} \\
&\le & 6C_\bc \vartheta_\bc^{n} \|f\|_{\cW, \bgamma, t} \|g\|_{\infty}.
\eeqyn
}This completes the proof of Lemma~\ref{lem: Leb mixing}.
\end{proof}

Now we are ready to prove Theorem~\ref{thm: mixing}.
For any $t\in [0, \oq)$,
again as the choice of $q_0$ is flexible, we may set
\beqn
q_0:=\frac{\oq-t}{2} \ \text{and} \ s:=1-\frac{\oq+t}{2}.
\eeqn
It is obvious that $s\in (1-\oq, 1]$, then by Theorem~\ref{thm: density}, the
invariant density $h=d\mu/dm\in \cH_{\cW_h, \bgamma, s}$.
We denote the collection $\cW_h=\{V_\beta: \beta\in \cB\}$ and
write $h=\sum_{\beta\in \cB} h_\beta$, where each $h_\beta\in \cH_{V_\beta, \bgamma}$.

For any $f\in \cH_{\cW, \bgamma, t}$ with a collection
$\cW=\{W_\alpha: \alpha\in \cA\}$, we write $f=\sum_{\alpha\in \cA} f_\alpha$,
where each $f_\alpha\in \cH_{W_\alpha, \bgamma}$.
Set the joint collection by
$\cW\vee \cW_h:=\{W_\alpha\cap V_\beta: \ \alpha\in \cA, \ \beta\in \cB\}$.
Then we can write $fh=\sum_{\alpha\in \cA} \sum_{\beta\in \cB} f_\alpha h_\beta$, and
{\allowdisplaybreaks
\beqyn
& & \sum_{\alpha\in \cA} \sum_{\beta\in \cB} |W_\alpha\cap V_\beta|^{t+s}
\|f_\alpha h_\beta \|_{W_\alpha\cap V_\beta, \bgamma} \\
&\le & \sum_{\alpha\in \cA} |W_\alpha|^{t} \|f_\alpha  \|_{W_\alpha, \bgamma}
\cdot \sum_{\beta\in \cB} |V_\beta|^s \|h_\beta \|_{V_\beta, \bgamma}
\le \|f\|_{\cW, \bgamma, t} \|h\|_{\cV, \bgamma, s}.
\eeqyn
}In other words, $fh\in \cH_{\cW\vee \cW_h, \bgamma, t+s}$ such that
$
\|f\|_{\cW\vee \cV, \bgamma, t+s}\le \|f\|_{\cW, \bgamma, t} \|h\|_{\cV, \bgamma, s}.
$
Note that the scale is $q_0=1-(t+s)$, and note that the constants $C_\bc$ and $\vartheta_\bc$
in Lemma~\ref{lem: Leb mixing} depend on $q_0$ and thus on $t$.
By Lemma~\ref{lem: Leb mixing},
{\allowdisplaybreaks
\beqyn
\left| \int f  g\circ T^n   d\mu - \int f  d\mu \int g  d\mu \right|
&=& \left| \int fh  g\circ T^n   dm - \int fh dm \int g  d\mu \right| \\
&\le & 6C_\bc \vartheta_\bc^{n} \|fh\|_{\cW\vee \cW_h, \bgamma, t+s} \|g\|_{\infty} \\
&\le & C_t  \vartheta_t^{n}\|f\|_{\cW, \bgamma, t} \|g\|_{\infty},
\eeqyn
}where $C_t=6C_\bc \|h\|_{\cW_h, \bgamma, s}$ and $\vartheta_t=\vartheta_\bc$.
This finishes the proof of Theorem~\ref{thm: mixing}.

\section{Proof of Theorem~\ref{thm: ASIP}}\label{Sec: Proof of ASIP}

Let $f\in \cH^{ad}_{\cW, \bgamma, t}$ be a function satisfying all the conditions
in Theorem~\ref{thm: ASIP}, and let $\sigma_f^2$ be given by \eqref{sigma_f}.
If $\sigma_f^2=0$, then it is well known that $f$ is a coboundary up to a constant,
i.e., $f=g-g\circ T+\IE(f)$ for some
$g\in L^2(\mu)$ (see e.g. Theorem 18.2.2 in \cite{MR0322926}), and thus the ASIP is automatic.
In the rest of the proof, we concentrate on the case when $\sigma_f^2>0$.

Given an integrable function $f: M\to \IR$ and a measurable partition $\xi$ of $M=[0, 1]$,
we denote by $\IE(f|\xi)$ the conditional expectation of $f$ with respect to $\xi$.
We also denote by $\sigma(\xi)$ the Borel $\sigma$-algebra on $M$ generated by $\xi$.

We recall the following result in \cite{MR0433597} (see also \S 7.9 in \cite{MR2229799}).

\begin{proposition}\label{prop: ASIP}
Suppose there exist constants $\eps\in (0, 2]$ and $C>0$ such that
\begin{itemize}
\item[(1)] $f\in L^{2+\eps}(\mu)$;
\item[(2)] for all $m\ge 1$,
$\left\| f- \IE\left(f|\xi_m\right) \right\|_{L^{2+\eps}(\mu)} \le C m^{-(2+7/\eps)}$;
\item[(3)] Suppose that $\sigma_f^2>0$ and
$\Var\left( \sum_{k=0}^{n-1} f\circ T^k \right)=n\sigma_f^2 + \cO(n^{1-\eps/30})$;
\item[(4)] For any $n\ge 1$ and $m\ge 1$,
$\left| \mu(A\cap B) - \mu(A)\mu(B)\right| \le C n^{-168(1+2/\eps)}$
for any $A\in \sigma(\xi_m)$ and $B\in \sigma(T^{-(n+m)}\xi_\infty)$.
\end{itemize}
Then the stationary process $\{f\circ T^n\}_{n\ge 0}$ satisfies the ASIP.
\end{proposition}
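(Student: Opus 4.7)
The plan is to follow the classical Philipp--Stout strategy, which proceeds by truncation, big--block/small--block decomposition, coupling to an independent sequence, and Skorokhod embedding into a Brownian motion. First I would reduce to $\sigma(\xi_m)$-measurable observables: write $f = \IE(f) + f_m + (f - \IE(f) - f_m)$ where $f_m := \IE(f - \IE(f) \,|\, \xi_m)$, choose a slowly growing sequence $m = m(n) \to \infty$, and use condition~(2) together with Markov's inequality and Borel--Cantelli to conclude that $\sum_{k < n} (f - \IE(f) - f_m) \circ T^k = o(n^\lambda)$ almost surely. This reduces the problem to proving an ASIP for the stationary process $\{f_m \circ T^k\}$ of $\sigma(\xi_m)$-measurable functions with $f_m \in L^{2+\eps}(\mu)$.

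Next comes the blocking. Fix $p \gg q \gg m$ and partition $\{0, \dots, n-1\}$ into alternating big blocks of length $p$ and small blocks of length $q$, letting $U_j, V_j$ denote the partial sums of $f_m \circ T^k$ over the $j$-th big and small block respectively. The contribution $\sum_j V_j$ is controlled by condition~(1) and a Marcinkiewicz--Zygmund type moment bound, giving an $o(n^\lambda)$ error. Since $f_m$ is $\sigma(\xi_m)$-measurable, the random variables $U_j$ and $U_{j+1}$ are measurable with respect to sub-$\sigma$-algebras of $\sigma(\xi_m)$ and $T^{-(p+q)}\sigma(\xi_\infty)$ whose temporal separation exceeds $q - m$; hypothesis~(4) with the large exponent $-168(1 + 2/\eps)$ then furnishes a very strong $\phi$-mixing bound between them. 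A standard Berkes--Philipp style coupling on an enlarged probability space produces an \emph{independent} sequence $\{U_j^*\}$ with the same one-dimensional marginals as $\{U_j\}$, such that $\sum_j (U_j - U_j^*) = o(n^\lambda)$ almost surely.

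For the now-independent sequence $\{U_j^*\}$, invoke the Skorokhod embedding theorem: on a suitable Brownian filtration there are stopping times $\tau_1 \le \tau_2 \le \dots$ with $W(\tau_j) = \sum_{i \le j} U_i^*$ in distribution and $\IE(\tau_j - \tau_{j-1}) = \Var(U_j^*)$. By condition~(3), $\Var(U_j^*) = p\sigma_f^2 + \cO(p^{1-\eps/30})$, while condition~(1) provides uniform $L^{2+\eps}$ control on the increments. Combining these with the standard moment estimates for Skorokhod stopping times yields $|\tau_{n/p} - n\sigma_f^2| = \cO(n^\lambda)$ almost surely, and H\"older continuity of Brownian motion then converts this timing error into an $\cO(n^\lambda)$ error on $|W(\tau_{n/p}) - W(n\sigma_f^2)|$. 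Reassembling the three error contributions (truncation, block coupling, Skorokhod embedding) gives the desired ASIP.

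The main obstacle is numerical rather than conceptual: the specific exponents $-(2 + 7/\eps)$ in~(2), $1 - \eps/30$ in~(3), and $-168(1 + 2/\eps)$ in~(4) are calibrated so that one can simultaneously choose $p = \lfloor n^\alpha \rfloor$, $q = \lfloor n^\beta \rfloor$, and $m = m(n)$ making each of the three error contributions fit within $\cO(n^\lambda)$ for a single $\lambda \in (0, 1/2)$. Verifying that such a triple $(\alpha, \beta, m(n))$ exists, and extracting the admissible range of $\lambda$, is the delicate optimization at the technical core of Philipp and Stout's argument in~\cite{MR0433597}. My proof of Proposition~\ref{prop: ASIP} would therefore amount to tracing their construction and checking that the present hypotheses~(1)--(4) are exactly the inputs their theorem requires.
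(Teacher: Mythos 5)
Your proposal is consistent with the paper's treatment: the paper gives no proof of this proposition at all, but simply quotes it as a known theorem of Philipp and Stout \cite{MR0433597} (see also \S 7.9 of \cite{MR2229799}), and your sketch is an accurate outline of that reference's truncation/big-small-block/coupling/Skorokhod-embedding argument, ending, as it must, by deferring the calibration of the exponents $2+7/\eps$, $1-\eps/30$ and $168(1+2/\eps)$ to the cited theorem --- which is exactly the role the citation plays in the paper. One small remark: since the bound in condition (4) is uniform over $A$ and $B$ without normalization by $\mu(A)$, it is a strong-mixing ($\alpha$-mixing) type condition of retarded form rather than a $\phi$-mixing bound, but this does not affect the structure of the argument.
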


Now we continue to prove Theorem~\ref{thm: ASIP} by verifying conditions
in Proposition~\ref{prop: ASIP} as follows:
\begin{itemize}
\item Since $f\in \cH^{ad}_{\cW, \bgamma, t}\subset L^{1/t}(\mu)$, where $t<\frac12$,
then Condition (1) holds by taking $\eps=\min\left\{2, \frac{1}{t} -2 \right\}$.
\item
To check Condition (2), we denote the adapted collection
$\cW=\{W_\alpha: \alpha\in \cA\}$ of countably many intervals
such that the endpoints of $W_\alpha$ belong to $\cS_{n(\alpha)}$ for some $n(\alpha)\in \IN$.
In other words, $W_\alpha\in \sigma\left(\xi_{n(\alpha)} \right)$.
Then we rewrite $f\in \cH^{ad}_{\cW, \bgamma, t}$ as $f=\sum_{\alpha\in \cA} f_\alpha$,
such that $f_\alpha\in \cH_{W_\alpha, \bgamma}$.
Now for every interval $W\in \xi_m$ and any two points $x, y\in W$, we have $\bs(x, y)\ge m$ and thus
{\allowdisplaybreaks
\beqyn
|f_\alpha(x)-f_\alpha (y)| &\le &
\begin{cases}
2\|f_\alpha\|_\infty, \ & \ \text{if} \ s(x,y)< n(\alpha), \\
|f_\alpha|_{\cH_{W_\alpha, \bgamma}} \bgamma^{\bs(x, y)}, \ & \ \text{if} \ s(x,y)\ge n(\alpha),
\end{cases} \\
&\le & 2\|f_\alpha\|_{\cH_{W_\alpha, \bgamma}} \bgamma^{m-n(\alpha)},
\eeqyn
}which implies that
$\left\| f_\alpha- \IE\left(f_\alpha |\xi_m\right) \right\|_{\infty}\le
2\|f_\alpha\|_{\cH_{W_\alpha, \bgamma}} \bgamma^{m-n(\alpha)}$.
Also, note that if $n(\alpha)\le m$, then both $f_\alpha$ and $\IE\left(f_\alpha |\xi_m\right)$
are supported on $W_\alpha$.

Note that $2+7/\eps=\max\left\{\frac{11}{2}, \ \frac{2+3t}{1-2t}\right\}<a$,
where $a$ is given by \eqref{fast tail}. Set $b=a/(2+7/\eps)$.
By Minkowski's inequality, as well as
\eqref{Holder partition ad} and \eqref{fast tail},
{\allowdisplaybreaks
\beqyn
\left\| f- \IE\left(f|\xi_m\right) \right\|_{L^{2+\eps}(\mu)}
&\le & \sum_{\alpha\in \cA}  \left\| f_\alpha- \IE\left(f_\alpha |\xi_m\right) \right\|_{L^{1/t}(\mu)} \\
&\le & \sum_{\alpha\in \cA: \ n(\alpha)<m^{\frac{1}{b}}}
\left\| f_\alpha- \IE\left(f_\alpha |\xi_m\right) \right\|_{\infty} \mu(W_\alpha)^{t}  \\
& & + 2 \sum_{\alpha\in \cA: \ n(\alpha)\ge m^{\frac{1}{b}}} \left\| f_\alpha \right\|_{L^{1/t}(\mu)} \\
&\le & 2\|f\|^{ad}_{\cW, \gamma, t} \bgamma^{m-m^{\frac{1}{b}}}+
\cO\left( m^{-a/b} \right) \\
&=& \cO\left(m^{-(2+7/\eps)} \right).
\eeqyn
}Hence Condition (2) holds.
\item
Note that the series in \eqref{sigma_f} converges absolutely
by Condition \eqref{cor 1m}.
By direct computation, we have
{\allowdisplaybreaks
\begin{eqnarray*}
\Var\left( \sum_{k=0}^{n-1} f \circ T^k\right)
&= & n \sigma_f^2 -  2\sum_{k>n} n  \Cov\left(f, f\circ T^k\right)  -2 \sum_{k=1}^{n-1} k \Cov\left(f, f\circ T^k\right) \\
&=& n  \sigma_f^2 +\cO\left(n^{1-\frac{1}{15}} \right)= n \sigma_f^2 +\cO\left(n^{1-\eps/30} \right).
\end{eqnarray*}
}Therefore, Condition (3) holds.
\item
By the $T$-invariance of $\mu$, it suffices to show Condition (4) with $m=1$.
Note that any $A\in \sigma(\xi_1)$ is a disjoint union of intervals in $\xi_1$.
We take $f_A=\bbone_A+1$, then $f_A\in \cH_{M, \bgamma}=\cH_{\{M\}, \bgamma, 0}$
such that
\beqn
\|f_A\|_{M, \bgamma} =
\|f_A\|_\infty + |f_A|_{\cH_{M, \bgamma}}
\le 2+1/\bgamma.
\eeqn
Also, $B\in \sigma(T^{-(n+1)}\xi_\infty)$ means that there is a Borel measurable subset $B'\subset M$
such that $B=T^{n+1} B'$, and thus $\bbone_B=\bbone_{B'}\circ T^{n+1}$.
Therefore, by Theorem~\ref{thm: mixing},
{\allowdisplaybreaks
\beqyn
\left|\mu(A\cap B)-\mu(A)\mu(B) \right| &=&
\left| \int f_A \cdot \bbone_{B'}\circ T^{n+1} d\mu - \int f_A d\mu \int \bbone_{B'} d\mu \right| \\
&\le & C_0 \vartheta_0^{n+1}   \| f_A \|_{M, \bgamma} \| \bbone_{B'} \|_{\infty} \\
&=& C_0 (2+1/\bgamma) \vartheta_0^{n+1},
\eeqyn
}which indicates Condition (4).
\end{itemize}
To sum up, any function $f\in \cH^{ad}_{\cW, \bgamma, t}$
satisfying all the conditions in Theorem~\ref{thm: ASIP}
also satisfies the four conditions in Proposition~\ref{prop: ASIP},
and hence the stationary process $\{f\circ T^n\}_{n\ge 0}$ satisfies the ASIP.
The proof of Theorem~\ref{thm: ASIP} is complete.

\section{Examples and Remarks}\label{sec: rem app}
We shall revisit several examples which were previously studied in the literature.
Applying our results to these examples, we could reinterpret some known results and
make some generalizations.

\subsection{A class of piecewise linear maps}\label{sec: rem q-scale}

In this subsection, we  consider a class of piecewise linear map on $M=[0, 1]$
with infinitely many inverse branches. More precisely,
given a sequence of positive numbers $\{a_k\}_{k\ge 1}$ such that $\sum_{k\ge 1} a_k=1$.
Set $b_0=1$ and for $k\ge 1$,
\beqn
b_k=1-\sum_{m=1}^{k} a_m=\sum_{m=k+1}^\infty a_m.
\eeqn
It is clear that $\xi_1:=\{W_k\}_{k\ge 1}$ is a partition of $M=[0, 1]$,
where $W_k:=(b_k, b_{k-1}]$.
Pick another sequence $\{\Lambda_k\}_{k\ge 1}$ of positive numbers
such that $\Lambda_k\ge 2$. Moreover, we assume that
$a_k\Lambda_k\ge b_1$ for $k=1, 2$, and
$a_k\Lambda_k\ge b_{k-2}$ for any $k> 2$.
Then we define a piecewise linear map $T: M\to M$ by setting
\begin{equation}\label{equ:piecewiselinear}
T(x)=
\begin{cases}
0, \ & \ x=0, \\
\Lambda_k(x-b_k), \ & \ x\in W_k.
\end{cases}
\end{equation}

Albeit $T$ is piecewise linear, the existence of acip (absolutely continuous $T$-invariant probability measure)
heavily depends on the above parameters. We emphasize that even if the map is Markov,
the ``big image property'' (i.e., $\inf_{k\ge 1} \left|TW_k\right|>0$) does not hold, and hence the classical
theory of Gibbs-Markov systems is not applicable in our situation.
We recall some results for these piecewise linear maps in earlier literature.
\begin{itemize}
\item[(1)] Rychlik \cite{MR728198} showed that if $\sum_{k\ge 1} \Lambda_k^{-1} <\infty$, then
$T$ admits an acip which enjoys the exponential mixing. Rychlik also constructed a counter-example,
that is, $T$ does not admit an acip if $a_k=2^{-k}$ and $\Lambda_k=2$.
\item[(2)] Bruin and Todd studied in \cite{MR2959300} a class of piecewise linear maps\footnote{By personal communication, Bruin and Todd named such map as the vSSV map, because it was introduced by van Strien to Stratmann and Vogt. This map has a bearing on the existence and nature of wild attractors in interval dynamics, see \cite{MR1370759}.}, which is a simplified
linear model of the induced map of the Fibonacci unimodal map.
To be precise, given any $\lambda\in(0,1)$, we set $a_{k}:=\lambda^{k-1}(1-\lambda)$ for all $k\ge 1$
and thus $b_{k}:=\lambda^{k}$ for all $k\ge 0$.
Meanwhile, put $\Lambda_{1}:=1/a_{1}$ and $\Lambda_{k}:=1/a_{2}$ for all $k\geq 2$.
The corresponding map is denoted by $T_\lambda$.
Bruin and Todd showed that $T_{\lambda}$ admits an acip if and only if $\lambda\in(0, \frac12)$.
Moreover, whenever $\lambda\in (0,\frac12)$,
they also showed that the invariant density restricting on each $W_k$ is a constant equals to
\beq\label{B-T density}
\frac{v_k}{|W_k|}:= \dfrac{\frac{1-2\lambda}{\lambda}\cdot(\frac{\lambda}{1-\lambda})^{k}}{a_k}
=\dfrac{(1-\lambda)(1-2\lambda)}{(1-\lambda)^k}.
\eeq
\end{itemize}

The following proposition provides a sufficient condition for the existence of acip,
when $T$ is the piecewise linear map given by \eqref{equ:piecewiselinear}.

\begin{proposition}\label{prop:application}
If there is $q\in(0,1]$ such that
\beq\label{suff expansion}
\inf_{k\ge 2} \Lambda_k> 2, \ \text{and} \
\limsup_{N\to \infty} \frac{\sum_{k=N+1}^\infty a_k^{1-q} \Lambda_k^{-q}}{b_N^{1-q}} < 1,
\eeq
then the piecewise linear map $T$ admits an acip,
which satisfies the exponential decay of correlation and almost sure invariant principle.
\end{proposition}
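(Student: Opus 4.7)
The plan is to verify Assumptions (\textbf{H1})--(\textbf{H3}) for the map $T$ defined by~\eqref{equ:piecewiselinear} and then invoke Theorems~\ref{thm: acip}, \ref{thm: mixing}, and \ref{thm: ASIP}. Assumption (\textbf{H2}) is immediate: since $T'|_{W_k}\equiv \Lambda_k$, the function $\log|T'|$ is constant on each element of $\xi_1$, so (\textbf{H2}) holds with $C_\bJ=0$ and any $\gamma_\bJ\in(0,1)$. Once (\textbf{H1})--(\textbf{H3}) are in hand, Theorem~\ref{thm: acip} yields the acip $\mu$, Theorem~\ref{thm: mixing} gives exponential decay of correlations against dynamically H\"older observables, and Theorem~\ref{thm: ASIP} yields the ASIP for any bounded dynamically H\"older $f\in\cH_{M,\bgamma}$: the adaptedness and fast-tail conditions are trivial with $\cW=\{M\}$ and $t=0$, while the correlation decay~\eqref{cor 1m} follows from the exponential mixing supplied by Theorem~\ref{thm: mixing}.

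For (\textbf{H1}), since $|TW_\alpha|=\Lambda_{k(\alpha)}|W_\alpha|$, the one-step quantity becomes
\[
\Sigma(W):=\sum_\alpha \left(\frac{|W|}{|TW_\alpha|}\right)^q\frac{|W_\alpha|}{|W|}=|W|^{q-1}\sum_\alpha \Lambda_{k(\alpha)}^{-q}|W_\alpha|^{1-q},
\]
and I would stratify the supremum over small $W$ into three regimes. If $W\subset W_k$, then $\Sigma(W)=\Lambda_k^{-q}\le 2^{-q}<1$. If $W$ crosses exactly one boundary $b_k$, a one-dimensional Lagrange optimization yields the closed form $\Sigma(W)\le (\Lambda_k^{-1}+\Lambda_{k+1}^{-1})^q$, which is strictly less than $1$ by $\inf_{k\ge 2}\Lambda_k>2$ (the boundary case $k=1$ uses $\Lambda_1\ge 2$ together with $\Lambda_2>2$). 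The only remaining case for $\delta$ small is that $W$ straddles two or more adjacent $b_k$'s, which forces $W\subset (0,b_{N(\delta)})$ with $N(\delta)\to\infty$. Writing $W=(x,y)$ with $y\in W_{K_1}$, the tail hypothesis in~\eqref{suff expansion} bounds $\sum_\alpha \Lambda_{k(\alpha)}^{-q}|W_\alpha|^{1-q}\le \sum_{k\ge K_1}\Lambda_k^{-q}a_k^{1-q}\le (1-\eps)\,b_{K_1-1}^{1-q}$, giving $\Sigma(W)\le (1-\eps)(b_{K_1-1}/|W|)^{1-q}$; I would then verify that in this multi-boundary regime $|W|$ is necessarily comparable to $b_{K_1-1}$, otherwise the partial endpoint pieces dominate and the analysis reduces to the one- or two-piece cases.

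For (\textbf{H3}) I would take the magnet $U=W_2$. The hypothesis $a_k\Lambda_k\ge b_{\max\{k-2,1\}}$ says $TW_k\supset (0,b_{\max\{k-2,1\}}]$, so $TW_k$ contains $W_j$ fully for every $j\ge \max\{k-1,2\}$; in particular both $TW_1$ and $TW_2$ fully contain $U=W_2$. Given an arbitrary interval $W$, uniform expansion together with the growth lemma (Lemma~\ref{lem: growth}) applied to the Lebesgue standard pair $(W,m_W)$ ensures that some component of $T^nW$ must eventually contain a full partition element $W_{k_0}$: components that do not cross any boundary simply lengthen by the factor $\Lambda_{k(\cdot)}\ge 2$ per step, and when a component does cross a boundary its image is split into pieces which retain a definite fraction of the total length. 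Once a component contains some $W_{k_0}$, the next iterate produces the component $TW_{k_0}\supset W_{k_0-1}$, and iterating this covering relation at most $k_0-1$ more times descends to a component that contains $U=W_2$; the property persists thereafter since $TW_2\supset W_2$.

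The main obstacle is the third regime of (\textbf{H1}). The naive Jensen bound yields only $\Sigma(W)\le (n/L)^q$, with $n=K_2-K_1+1$ the number of pieces and $L=\inf_{k\ge 2}\Lambda_k$, which is useless once $n\ge L$; the tail comparison in~\eqref{suff expansion} is engineered precisely to replace this $n^q$ factor, but translating it into a clean bound demands careful geometric book-keeping on the relative sizes of $|W|$, $b_{K_1-1}$, and the two partial endpoint pieces. A secondary subtlety is (\textbf{H3}) for non-Markov starting intervals, where one must verify that an individual \emph{component} (rather than the entire $T^nW$) eventually contains a full partition element; here the quantitative bound on $\cZ(T^n\cG)$ supplied by Lemma~\ref{lem: growth} rather than bare expansion is what prevents perpetual fragmentation into arbitrarily short sub-intervals.
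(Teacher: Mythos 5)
Your proposal follows essentially the same route as the paper: (\textbf{H2}) is immediate from piecewise linearity, $W_2$ serves as the magnet for (\textbf{H3}) via the covering chain $TW_k\supset W_{k-1}$, (\textbf{H1}) is checked by a case analysis on how many elements of $\xi_1$ the test interval $W$ meets, and the conclusions then follow from Theorems~\ref{thm: acip},~\ref{thm: mixing} and~\ref{thm: ASIP}. Two comparative remarks. For (\textbf{H3}) the paper's argument is more direct than yours and does not need Lemma~\ref{lem: growth}: after $n_0\le -\log_2|W|$ steps some component $V$ of $T^{n_0}W$ has a left endpoint lying in $\cS_1$, and since every branch maps the left end of its domain to $0$, the image $TV$ contains an interval $(0,\eta)$ and hence a \emph{full} element $W_\ell$; the descent $TW_k\supset W_{k-1}$ then reaches $W_2$. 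This sidesteps the fragmentation issue you worry about, because one never needs the component to contain a full partition element before applying $T$ once more.

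Concerning the ``third regime'' of (\textbf{H1}), which you correctly single out as the main obstacle: the paper does not perform the book-keeping you call for either. It simply declares that for $W$ near $0$ one may ``without loss of generality'' take $W=[0,b_N]$ and evaluates the sum for that $W$, which is exactly the quantity appearing in \eqref{suff expansion}. So you are not missing a resolution that the paper supplies; both arguments leave the reduction from a general multi-boundary interval to $[0,b_N]$ unjustified. I do want to flag that your proposed patch --- that in this regime $|W|$ is necessarily comparable to $b_{K_1-1}$ --- does not follow from the standing assumptions: an interval meeting exactly three elements $W_{K_1},W_{K_1+1},W_{K_1+2}$ with tiny overhangs on both sides has $|W|$ of order $a_{K_1+1}$, and neither \eqref{suff expansion} nor $a_k\Lambda_k\ge b_{k-2}$ forces $a_{K_1+1}/b_{K_1-1}$ to be bounded below. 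Your fallback clause (when the tail estimate is lossy the configuration degenerates to the one- or two-piece cases) is the right idea, but it is precisely the step that needs to be written out to make either your proof or the paper's fully rigorous; as it stands the crude bound $\Sigma(W)\le(1-\eps)\bigl(b_{K_1-1}/|W|\bigr)^{1-q}$ goes the wrong way whenever $|W|\ll b_{K_1-1}$.
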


\begin{remark}
It is not hard to see that Rychlik's condition $\sum_{k\ge 1} \Lambda_k^{-1} <\infty$ is stronger than
Condition \eqref{suff expansion}. Therefore, the results in \cite{MR656227, MR728198} are recovered by our coupling method.
Also, Condition \eqref{suff expansion} never holds for any $q\in(0,1]$ if $\Lambda_k=2$ for all $k$, which
corresponds to the absence of acip.
\end{remark}

\begin{proof}[Proof of Proposition \ref{prop:application}]
It is obvious that Assumption \textbf{(H2)} holds
since the log Jacobian $\log|T'|$ is constant on each interval $W_k\in \xi_1$.

We next verify that $T$ satisfies Assumption \textbf{(H3)} by showing the second branch $W_2$ is a magnet interval.
By our definition, it is easy to see $TW_1\supset W_2$, $TW_2\supset W_2$,
and $TW_k\supset \cup_{m=k-1}^\infty W_m\supset W_{k-1}$ for any $k>2$.
Hence a component of $T^n W_k$ must contain $W_2$ for any $n\ge k-2$.
For any interval $W\subset M$, by the uniform expansion with rate $\Lambda_k>2$,
$T^{n_0} W$ must be cut by $\cS_1=\{b_k\}_{k\ge 1}$ for some positive integer $n_0\le -\log_2 |W|$.
We pick a component $V$ of $T^{n_0} W$ whose left endpoint belongs to $\cS_1$,
then $TV\supset W_\ell$ for some $\ell>2$. Therefore, at least one component
of $T^n W$ contains $W_2$ for any $n\ge n_W:=n_0+\ell-1$,
which implies that $W_2$ is a magnet.

Finally, we focus on the validity of Assumption \textbf{(H1)}.
Indeed, let $W$ be an interval of length less than a sufficiently small $\delta>0$.
\begin{itemize}
\item
If $W$ is away from the accumulation point $0$,
then it only intersects two consecutive intervals in $\xi_1$, say $W_k$ and $W_{k+1}$,
and thus
\beqn
\sum_{{\alpha\in W/\xi_1}}
 \left(\frac{|W|}{|TW_{\alpha}|}\right)^{{q}}
 \frac{|W_{\alpha}|}{|W|}\le \left(\frac{1}{\Lambda_k} + \frac{1}{\Lambda_{k+1}} \right)^{q}
 \le \left(\frac{1}{2} + \frac{1}{\inf_{k\ge 2} \Lambda_k} \right)^{q}.
\eeqn
\item
Otherwise, if $W$ is close to $0$,
without loss of generality, we may assume $W=[0, b_N]=\cup_{k=N+1}^\infty W_k$
for sufficiently large $N$.
Then
\beqn
\sum_{{\alpha\in W/\xi_1}} \left(\frac{|W|}{|TW_{\alpha}|}\right)^{{q}} \frac{|W_{\alpha}|}{|W|}
=\sum_{k=N+1}^\infty \left(\frac{|W|}{|TW_k|}\right)^{{q}} \frac{|W_k|}{|W|}
=\frac{\sum_{k=N+1}^\infty a_k^{1-q} \Lambda_k^{-q}}{b_N^{1-q}}
\eeqn
\end{itemize}
In other words, Condition \eqref{suff expansion} guarantees
Assumption  \textbf{(H1)} - the Chernov's one-step expansion holds at $q$-scale in either of the above cases.

Applying Theorems~\ref{thm: acip},~\ref{thm: density},~\ref{thm: mixing} and~\ref{thm: ASIP}, we can
deduce all the assertions of Proposition \ref{prop:application}.
\end{proof}

We now provide two particular examples of piecewise linear maps
which satisfy Condition~\eqref{suff expansion} and thus Proposition~\ref{prop:application}.
\begin{itemize}
\item[(1)] In spirit of Rychlik's results and counter-example in \cite{MR728198}, we consider the
piecewise linear map with  $a_k=2^{-k}$ and $\Lambda_k=k$. It is straightforward that $b_N=2^{-N}$ and
for any $q\in (0, 1)$, we have
\beqn
\frac{\sum_{k=N+1}^\infty a_k^{1-q} \Lambda_k^{-q}}{b_N^{1-q}} =
\frac{\sum_{k=N+1}^\infty 2^{k(q-1)} k^{-q} }{2^{N(q-1)}}\le \frac{2^{q-1} N^{-q}}{1-2^{q-1}} \to 0
\eeqn
as $N\to \infty$, and hence Condition~\eqref{suff expansion} holds.
\item[(2)] Let $T_\lambda$ be the piecewise linear map that Bruin and Todd studied in \cite{MR2959300}.
Given any $\lambda\in(0,1)$, we recall that
$a_{k}:=\lambda^{k-1}(1-\lambda)$ for all $k\ge 1$ and thus $b_{k}:=\lambda^{k}$ for all $k\ge 0$.
Moreover, $\Lambda_{1}:=1/a_{1}$ and $\Lambda_{k}:=1/a_{2}$ for all $k\geq 2$.
We claim that
$T_\lambda$ satisfies Condition \eqref{suff expansion} and thus Proposition \ref{prop:application}
if and only if $\lambda\in(0, \frac12)$, which agrees with the results of Bruin and Todd in \cite{MR2959300}.
Indeed, it is easy to see that for any $\lambda\in (0, 1)$,
$$
\inf_{k\geq 2} \Lambda_{k}=\frac{1}{\lambda(1-\lambda)}\geq 4.
$$
Meanwhile,
{\allowdisplaybreaks
\beqy\label{B-T}
\frac{\sum_{k=N+1}^\infty a_k^{1-q} \Lambda_k^{-q}}{b_N^{1-q}}
&=&\frac{\sum_{k=N+1}^{\infty}[\lambda^{k-1}(1-\lambda)]^{1-q}\cdot[\frac{1}{\lambda(1-\lambda)}]^{-q}}{\lambda^{(1-q)N}} \nonumber\\
&=& \frac{\lambda^{q}(1-\lambda)}{1-\lambda^{1-q}}.
\eeqy
}
It is not hard to check that \eqref{B-T} is less than $1$ if and only if $\lambda^{1-q}<1-\lambda$,
and hence \eqref{B-T} is less than $1$ for some $q\in (0, 1)$ if and only if  $\lambda\in(0,\frac12)$.
In other words, Condition~\eqref{suff expansion} holds if and only if $\lambda\in(0,\frac12)$.

We remark that when $\lambda\in(0, \frac12)$, the invariant density given by \eqref{B-T density}
is a dynamically H\"older series, which agrees with our Theorem \ref{thm: density}.
More precisely, it is straightforward to check that the invariant density belongs to $\cH_{\cW, \bgamma, s}$, where
$\cW=\{W_k\}_{k\ge 1}$, for any $\bgamma\in (0, 1)$, and for any $s\in (0, 1)$ such that $\lambda^s<1-\lambda$.

\begin{comment}
Set a function $f_{\lambda}(q):=\frac{\lambda^{q}(1-\lambda)}{1-\lambda^{1-q}}$, then $f_{\lambda}(0)=1$ and
$$
f_{\lambda}'(q)=\frac{(1-\lambda)\cdot\lambda^{q}\cdot\ln\lambda\cdot(1-2\lambda^{1-q})}{(1-\lambda^{1-q})^2}.
$$
By simple calculus, it is not hard to check that $f_\lambda(q)<1$ for some $q\in (0, 1)$ if and only if  $\lambda\in(0,1/2)$.
\end{comment}

\end{itemize}

%Now we give a sufficient condition to the intermediate situation, that is,
%the one-step expansion  fails at $1$-scale
%as $\sum_{k\ge 1} \Lambda_k^{-1} =\infty$, while it does hold at $q$-scale for some $q\in (0, 1)$.
%More precisely, we assume that
%\beq\label{suff expansion}
%\inf_{k\ge 2} \Lambda_k> 2, \ \text{and} \
%\limsup_{N\to \infty} \frac{\sum_{k=N+1}^\infty a_k^{1-q} \Lambda_k^{-q}}{b_N^{1-q}}  < 1.
%\eeq
%For a specific choice, we could take $a_k=2^{-k}$ and $\Lambda_{k}=k+1$. Then
%$\sum_{k\ge 1} \Lambda_k^{-1} =\sum_{k\ge 1} \frac{1}{k+1}=\infty$ so that \eqref{def theta(1)} fails,
%while it is easy to check \eqref{suff expansion} holds and thus the one-step expansion at $q$-scale
%holds for any $q<1$.

\subsection{Certain unbounded observables}\label{sec: unbounded}

Let  $T: M=[0, 1]\to M$ be a one-dimensional map satisfying
Assumption \textbf{(H1)}, i.e., the one-step expansion at $q$-scale,
and recall that $\oq$ is the supremum of such $q$ given in \eqref{def oq}.
It directly from \textbf{(H1)} that $T$ is uniformly expanding, i.e.,
there exists $\Lambda>1$ such that
$\inf_{x\in M\backslash \cS_\infty} |T'(x)|>\Lambda$.
It is easy to see that the separation time $\bs(\cdot, \cdot)$
in Definition~\ref{def: sep} induces a weaker metric on $M$, that is,
there exists $C>0$ such that
\beqn
|x-y|\le C\Lambda^{-\bs(x, y)}, \ \ \text{for any}\ x, y\in M.
\eeqn
Let $\bgamma$ be the constant given by \eqref{regular constant}, which can be taken
arbitrarily close to $1$, and set $\kappa:=-\log_{\Lambda} \bgamma>0$.
If $f$ is a $\kappa$-H\"older function on an interval $W\subset M$,
then $f$ is also a dynamically H\"older function on $W$ with parameter $\bgamma$
such that
$|f|_{W, \bgamma}\le C^{-\kappa} |f|_{C^\kappa(W)}$.
Note that in applications, we could always take $\gamma$ arbitrarily close to $1$.

As pointed out in Remark~\ref{rem: observable q}, the space
$\cH_{\cW, \bgamma, t}$ with $t>0$ would contain some unbounded observables.
For instance, for any $\tau\in (0, \oq)$,
we consider the unbounded function
\beq\label{def f unbounded}
f(x)=
\begin{cases}
x^{-\tau}, \ & \ 0<x\le 1, \\
0, \ & \ x=0.
\end{cases}
\eeq

\begin{remark}
This function was studied in Gou\"ezel's note \cite{Gou},
in which $T$ is the doubling map. He showed that $f$ satisfies a stable law when $\tau\ge \frac12$, and
he also pointed out that $f$ satisfies a CLT when $\tau\in (0, \frac12)$, using the criteria by Dedecker \cite{MR2099550}.
We shall show below that the ASIP holds in the latter case.
\end{remark}

It is clear that $f\in \cH_{\cW, \bgamma, t}$ for any $t\in (\tau, \oq)$
and some $\bgamma$ close to $1$,
where the collection is chosen to be $\cW=\{W_k:=(2^{-k}, 2^{-k+1}]\}_{k\ge 1}$.
Indeed, we set $\kappa:=-\log_{\Lambda} \bgamma$, and
write $f=\sum_{k\ge 1} f_k$ with $f_k=f\bbone_{W_k}$, then
{\allowdisplaybreaks
\beqyn
\|f_k\|_{W_k, \bgamma} & \lesssim & \|f_k\|_{L^\infty(W_k)} + |f_k|_{C^\kappa(W_k)} \\
& \lesssim & \|f_k\|_{L^\infty(W_k)} + |f_k'|_{L^\infty(W_k)}  |W_k|^{1-\kappa} \\
& \lesssim & 2^{k\tau} + 2^{k(\tau+1)} 2^{-k(1-\kappa)} \lesssim 2^{k(\tau+\kappa)}.
\eeqyn
}Thus, if we choose $\bgamma$ close to $1$ such that $\kappa:=-\log_{\Lambda} \bgamma<t-\tau$, then
\beq\label{f tau Holder}
\|f\|_{\cH_{\cW, \bgamma, t}}=\sum_{k\ge 1} |W_k|^{t} \|f_k\|_{W_k, \bgamma}
\lesssim \sum_{k\ge 1} 2^{k(\tau+\kappa-t)} < \infty.
\eeq
By Theorem~\ref{thm: mixing},
the correlations between
any unbounded function $f\in \cH_{\cW, \bgamma, t}$ and any bounded observable $g\in L^\infty(m)$ decays
exponentially fast. \\

Finally, we discuss the space $\cH^{ad}_{\cW, \bgamma, t}$ with fast tail,
for which the ASIP applies by Theorem~\ref{thm: ASIP}.
For simplicity, we consider the doubling map $T: x\mapsto 2x \pmod 1$,
with the partition $\xi_1=\{[0, \frac12], (\frac12, 1]\}$ and invariant measure $\mu=m$.
We claim that if $\tau\in (0, \frac12)$, then
the unbounded function $f$ given by \eqref{def f unbounded} satisfies the ASIP.
Indeed,
\begin{itemize}
\item
The collection $\cW=\{W_k:=(2^{-k}, 2^{-k+1}]\}_{k\ge 1}$ is adapted such that $n(k)=k$.
Pick any $t\in \left(\tau, \frac12 \right)$, it follows from \eqref{f tau Holder} that $f\in \cH^{ad}_{\cW, \bgamma, t}$.
\item Moreover, $f$ has fast tail since
\beqn
\sum_{k\ge  n} \|f_k\|_{L^{1/t}(\mu)}\le \sum_{k\ge  n} \|f_k\|_\infty |W_k|^t
=\sum_{k\ge  n} 2^{k(\tau-t)}
=\cO\left( 2^{n(\tau-t)} \right).
\eeqn
\item The auto-correlations condition \eqref{cor 1m} holds since
the Fourier coefficients of $f$ satisfy that
$
a_k:=\int_0^1 x^{-\tau} e^{i2\pi kx} dx \lesssim k^{\tau-1},
$
and thus
\beqn
\left| \Cov(f, \ f\circ T^n) \right|=\sum_{k=1}^\infty a_k a_{k\cdot 2^n}
\lesssim 2^{n(\tau-1)} \sum_{k\ge 1} k^{2(\tau-1)} = \cO\left( \left(2^{\tau-1}\right)^n \right).
\eeqn
\end{itemize}
Therefore, the unbounded function $f$ given by \eqref{def f unbounded} satisfies the ASIP and thus the CLT.\\

\medskip

\begin{comment}
\subsection{Multi-step expansion}\label{sec: multi-step}

A specific one-dimensional piecewise monotonic map might not present
one-step expansion, but it can be eventually expanding. For instance, the piecewise linear map
\beqn
T(x)=
\begin{cases}
x+\frac12, \ & \ 0\le x\le \frac12, \\
2x-1, \ & \ \frac12<x\le 1,
\end{cases}
\eeqn
does not satisfy Chernov's one-step expansion condition given by Assumption \textbf{(H1)},
while its second iterate $T^2$ does.

To this end, we can relax \textbf{(H1)} to a multi-step expansion condition as follows:
there exist $m\in \IN$ and $q\in (0,1]$ such that
\beq\label{def bn_0}
\liminf_{\delta\to 0} \sup_{W\colon |W|<\delta} \ \sum_{{\alpha\in W/\xi_{m}}}
 \left(\frac{|W|}{|T^{m}W_{\alpha}|}\right)^{{q}}
 \frac{|W_{\alpha}|}{|W|}<1,
\eeq
where the supremum is taken over all sub-intervals $W\subset M$.
Also, we should avoid the presence of critical points, i.e.,
there exists $\eta>0$ such that
\beq\label{def eta}
|T'(x)|>\eta, \ \ \text{for any} \ x\in M\backslash \cS_1.
\eeq
It is not hard to see that all the theorems hold if we replace \textbf{(H1)} by
Conditions~\eqref{def bn_0} and~\ref{def eta}, after suitable modifications on constants.

Indeed, we only need to modify some constants in the growth lemma.
More precisely, in the growth estimate \eqref{eq growth} of Lemma~\ref{lem: growth},
we replace $\theta_0^{n}$ by $\theta_0^{n/\bn_0}$, and
enlarge $C_\br$ and $c_0$ accordingly.
\end{comment}

\appendix

\section{Proof of Lemma~ \ref{lem: split} and~ \ref{lem: covering}  }\label{sec: appendix}

\subsection{Proof of Lemma \ref{lem: split}}

%Lemma~\ref{lem: sp property} shows that the density function of any standard pair
%has a uniform lower bound $e^{-C_\br}$.

%Fix a magnet $U$ given by Assumption (\textbf{H3}).

\begin{proof} By the formula of $\cZ(\cdot)$ and \eqref{G split 1}, for any standard family $\cG$,
{\allowdisplaybreaks
\begin{eqnarray*}
\left|\dfrac{ \cZ(\hcG)}{\cZ(\cG)} -1 \right|
&\le &\dfrac{\sum_{\alpha\in \bcA} \frac{\brho (1-\bdelta)}{1-\brho\bdelta} \lambda_\alpha |W_\alpha|^{-q_0}
+\sum_{\alpha\in \cA\backslash \bcA} \frac{\brho \bdelta}{1-\brho\bdelta}\lambda_\alpha |W_\alpha|^{-q_0}}
{\sum_{\alpha\in \bcA} \lambda_\alpha |W_\alpha|^{-q_0}
+\sum_{\alpha\in \cA\backslash \bcA} \lambda_\alpha |W_\alpha|^{-q_0}} \\
&\le & \max\left\{ \frac{\brho (1-\bdelta)}{1-\brho\bdelta},  \  \frac{\brho \bdelta}{1-\brho\bdelta} \right\}
\le \dfrac{\brho}{1-\brho}.
\end{eqnarray*}
}Moreover, for any $\alpha\in \bcA$, we have $W_\alpha=U$, and the density of
$\dfrac{\nu_{\alpha}-\brho\ m_{W_\alpha}}{1-\brho}$ is given by
$\dfrac{\rho_{\alpha}-\brho}{1-\brho}$. By Lemma~\ref{lem: sp property},
for any $x, y\in W_\alpha$,
{\allowdisplaybreaks
\beqyn
& & \left|\log \dfrac{\rho_{\alpha}(x)-\brho}{1-\brho} - \log \dfrac{\rho_{\alpha}(y)-\brho}{1-\brho}\right| \\
&\le & \left|\log \rho_\alpha(x) -\log \rho_\alpha(y)\right|
+\log\left(1+ \brho \dfrac{|\rho_\alpha(x)^{-1}-\rho_\alpha(y)^{-1}|}{1-\brho \max\{\rho_\alpha(x)^{-1}, \rho_\alpha(y)^{-1}\}} \right) \\
&\le & C_\br \bgamma^{\bs(x,y)} + \frac{\brho C_\br e^{C_\br}}{(1- \brho e^{C_\br})|U|} \bgamma^{\bs(x,y)}.
\eeqyn
}We use the fact $\log(1+z)\le z$ for any $z\ge 0$ in the last inequality.
Hence, for any $\alpha\in \bcA$,
\beqn
\left|\log \left( \frac{\rho_\alpha - \brho}{1-\brho}\right)\right|_{W_\alpha, \bgamma}
\le C_\br + \frac{\brho C_\br e^{C_\br}}{(1- \brho e^{C_\br})|U|}.
\eeqn
Therefore, we can choose $\brho_\bc$ small enough such that  for any $\brho \in (0, \brho_\bc)$ and
any standard family $\cG$, we have that
$\cZ(\hcG)\le \cZ(\cG)/\theta_0$, and
the density of each pair in $\hcG$ satisfies \eqref{non-standard 1} and \eqref{non-standard 2}.
By Remarks~\ref{remark non-standard 1} and~\ref{remark non-standard 2}, we have that
$T\hcG$ is a standard family, and
\beqn
\cZ(T\hcG)\le e^{4C_\br} \left( \cZ(\hcG)\theta_0 + c_0 \right)\le e^{4C_\br} \left(\cZ(\cG) + c_0 \right).
\eeqn
The proof of this lemma is complete.
\end{proof}

\subsection{Proof of Lemma \ref{lem: covering}}%\label{sec: magnet}
We first choose an integer $k\ge 1$ such that
$(k/3)^{q_0} \ge 2C_\bp$, where $C_\bp$
is the proper constant that we choose in \eqref{choose Zp}.
We then divide  $M=[0,1]$
into $k$ sub-intervals $W_1, W_2, \dots, W_{k}$ of equal length.
For each Lebesgue standard pair
$\cG_i=(W_i, m_{W_i})$, by Assumption (\textbf{H3}), there exists $n_{W_i}\ge 1$
such that for any $n\ge n_{W_i}$,
at least one component of $T^n(W_i)$ contains $U$, which means that $\delta(T^n\cG_i)>0$.
We set
\beqn
n_\bc:=\max\{n_\bp, \max_{1\le i\le k} n_{W_i}\}, \ \ \text{and}\ \ d_\bc':=\min_{1\le i\le k} \delta(T^{n_\bc}\cG_i).
\eeqn

For any proper standard family $\cG=\sum_{\alpha\in \cA} \lambda_\alpha  (W_{\alpha}, \nu_\alpha)$,
we denote
$
\cA_0:=\left\{\alpha\in \cA: \ |W_\alpha|\ge 3/k \right\},
$
then
\beqn
\sum_{\alpha\in \cA_0} \lambda_\alpha = 1-\sum_{\alpha\not\in \cA_0} \lambda_\alpha
\ge 1 - \left(\frac{k}{3}\right)^{-q_0} \cZ(\cG) \ge \frac12.
\eeqn
For any $\alpha\in \cA_0$, there exists $1\le i_\alpha\le N$ such that $W_\alpha$ contains $W_{i_\alpha}$.
We then cut the Lebegue standard pair $(W_\alpha, m_{W_\alpha})$ by the two endpoints of $W_{i_\alpha}$,
and obtain a new standard family $\cG_\alpha'$. Note that the weight of $W_{i_\alpha}$ in $\cG_\alpha'$ is
$\frac{|W_{i_{\alpha}}|}{|W_\alpha|}\ge \frac{1}{k}$.
By \eqref{delta p1}, \eqref{delta p2} and \eqref{delta p3}, we have
{\allowdisplaybreaks
\begin{eqnarray*}
\delta(T^{n_\bc}\cG)\ge \sum_{\alpha\in \cA_0}  \lambda_{\alpha} \delta(T^{n_\bc}(W_\alpha, \nu_\alpha))
&\ge &\sum_{\alpha\in \cA_0} \lambda_{\alpha} e^{-C_\br}\delta(T^{n_\bc}(W_\alpha, m_{W_\alpha})) \\
&\ge  & \sum_{\alpha\in \cA_0} \lambda_{\alpha} e^{-C_\br}\delta(T^{n_\bc}(\cG_\alpha')) \\
&\ge & \sum_{\alpha\in \cA_0} \lambda_{\alpha} e^{-C_\br} \frac{1}{k} \delta(T^{n_\bc}(W_{i_\alpha}, m_{W_{i_\alpha}})) \\
&\ge & \sum_{\alpha\in \cA_0} \lambda_{\alpha} e^{-C_\br} \frac{1}{k} d_\bc' \\
&\ge & \frac{e^{-C_\br} d_\bc' }{2k} =: d_\bc.
\end{eqnarray*}
}This completes the proof of the lemma.

\bigskip

\noindent\textbf{Acknowledgement}

J. Chen would like to thank Tianyuan Mathematical Center in Southwest China (TMCSC),
where part of the work was done, for their hospitality and support (Grant 11826102).

Y.-W. Zhang is partially support by the NSFC grant 11701200 and 11871262.
Y.-W. Zhang would also like to thank AMS China exchange program Ky and Yu-Fen Fan fund travel grant
for visiting University of Massachusetts Amherst, where part of this work was carried out.

\bibliography{1dcoupling2020_updated}

\begin{thebibliography}{10}

\bibitem{MR1938476}
M.~Blank, G.~Keller, and C.~Liverani.
\newblock Ruelle-{P}erron-{F}robenius spectrum for {A}nosov maps.
\newblock {\em Nonlinearity}, 15(6):1905--1973, 2002.

\bibitem{MR1675304}
X.~Bressaud, R.~Fern\'{a}ndez, and A.~Galves.
\newblock Decay of correlations for non-{H}\"{o}lderian dynamics. {A} coupling
  approach.
\newblock {\em Electron. J. Probab.}, 4:no. 3, 19, 1999.

\bibitem{MR1889567}
X.~Bressaud and C.~Liverani.
\newblock Anosov diffeomorphisms and coupling.
\newblock {\em Ergodic Theory Dynam. Systems}, 22(1):129--152, 2002.

\bibitem{MR1328254}
H.~Bruin.
\newblock Induced maps, {M}arkov extensions and invariant measures in
  one-dimensional dynamics.
\newblock {\em Comm. Math. Phys.}, 168(3):571--580, 1995.

\bibitem{MR3249887}
H.~Bruin and C.~Kalle.
\newblock Natural extensions for piecewise affine maps via {H}ofbauer towers.
\newblock {\em Monatsh. Math.}, 175(1):65--88, 2014.

\bibitem{MR1370759}
H.~Bruin, G.~Keller, T.~Nowicki, and S.~van Strien.
\newblock Wild {C}antor attractors exist.
\newblock {\em Ann. of Math. (2)}, 143(1):97--130, 1996.

\bibitem{MR2322177}
H.~Bruin and M.~Todd.
\newblock Markov extensions and lifting measures for complex polynomials.
\newblock {\em Ergodic Theory Dynam. Systems}, 27(3):743--768, 2007.

\bibitem{MR2959300}
H.~Bruin and M.~Todd.
\newblock Transience and thermodynamic formalism for infinitely branched
  interval maps.
\newblock {\em J. Lond. Math. Soc. (2)}, 86(1):171--194, 2012.

\bibitem{MR1469107}
J.~Buzzi.
\newblock Intrinsic ergodicity of smooth interval maps.
\newblock {\em Israel J. Math.}, 100:125--161, 1997.

\bibitem{MR1714974}
J.~Buzzi.
\newblock Markov extensions for multi-dimensional dynamical systems.
\newblock {\em Israel J. Math.}, 112:357--380, 1999.

\bibitem{MR3880492}
J.~Chen and K.~Nguyen.
\newblock Invariance principles for ergodic systems with slowly
  {$\alpha$}-mixing inducing base.
\newblock In {\em Differential equations and dynamical systems}, volume 268 of
  {\em Springer Proc. Math. Stat.}, pages 115--126. Springer, Cham, 2018.

\bibitem{MR3856951}
J.~Chen, Y.~Yang, and H.-K. Zhang.
\newblock Non-stationary almost sure invariance principle for hyperbolic
  systems with singularities.
\newblock {\em J. Stat. Phys.}, 172(6):1499--1524, 2018.

\bibitem{MR1675363}
N.~Chernov.
\newblock Decay of correlations and dispersing billiards.
\newblock {\em J. Statist. Phys.}, 94(3-4):513--556, 1999.

\bibitem{MR2219528}
N.~Chernov.
\newblock Advanced statistical properties of dispersing billiards.
\newblock {\em J. Stat. Phys.}, 122(6):1061--1094, 2006.

\bibitem{MR2499824}
N.~Chernov and D.~Dolgopyat.
\newblock Brownian {B}rownian motion. {I}.
\newblock {\em Mem. Amer. Math. Soc.}, 198(927):viii+193, 2009.

\bibitem{MR2229799}
N.~Chernov and R.~Markarian.
\newblock {\em Chaotic billiards}, volume 127 of {\em Mathematical Surveys and
  Monographs}.
\newblock American Mathematical Society, Providence, RI, 2006.

\bibitem{MR2150341}
N.~Chernov and H.-K. Zhang.
\newblock Billiards with polynomial mixing rates.
\newblock {\em Nonlinearity}, 18(4):1527--1553, 2005.

\bibitem{MR2540156}
N.~Chernov and H.-K. Zhang.
\newblock On statistical properties of hyperbolic systems with singularities.
\newblock {\em J. Stat. Phys.}, 136(4):615--642, 2009.

\bibitem{MR2099550}
J.~Dedecker.
\newblock In\'{e}galit\'{e}s de covariance.
\newblock {\em C. R. Math. Acad. Sci. Paris}, 339(7):503--506, 2004.

\bibitem{MR2403704}
M.~F. Demers and C.~Liverani.
\newblock Stability of statistical properties in two-dimensional piecewise
  hyperbolic maps.
\newblock {\em Trans. Amer. Math. Soc.}, 360(9):4777--4814, 2008.

\bibitem{MR2903754}
M.~F. Demers and H.-K. Zhang.
\newblock Spectral analysis of the transfer operator for the {L}orentz gas.
\newblock {\em J. Mod. Dyn.}, 5(4):665--709, 2011.

\bibitem{MR3123537}
M.~F. Demers and H.-K. Zhang.
\newblock A functional analytic approach to perturbations of the {L}orentz gas.
\newblock {\em Comm. Math. Phys.}, 324(3):767--830, 2013.

\bibitem{MR3168259}
M.~F. Demers and H.-K. Zhang.
\newblock Spectral analysis of hyperbolic systems with singularities.
\newblock {\em Nonlinearity}, 27(3):379--433, 2014.

\bibitem{MR1782146}
D.~Dolgopyat.
\newblock On dynamics of mostly contracting diffeomorphisms.
\newblock {\em Comm. Math. Phys.}, 213(1):181--201, 2000.

\bibitem{Gou}
S.~Gou\"ezel.
\newblock Stable laws for the doubling map.
\newblock {\em Online}, 2008.

\bibitem{MR2663640}
S.~Gou\"{e}zel.
\newblock Almost sure invariance principle for dynamical systems by spectral
  methods.
\newblock {\em Ann. Probab.}, 38(4):1639--1671, 2010.

\bibitem{MR3646763}
N.~Haydn, M.~Nicol, A.~T\"{o}r\"{o}k, and S.~Vaienti.
\newblock Almost sure invariance principle for sequential and non-stationary
  dynamical systems.
\newblock {\em Trans. Amer. Math. Soc.}, 369(8):5293--5316, 2017.

\bibitem{MR570882}
F.~Hofbauer.
\newblock On intrinsic ergodicity of piecewise monotonic transformations with
  positive entropy.
\newblock {\em Israel J. Math.}, 34(3):213--237 (1980), 1979.

\bibitem{MR599481}
F.~Hofbauer.
\newblock On intrinsic ergodicity of piecewise monotonic transformations with
  positive entropy. {II}.
\newblock {\em Israel J. Math.}, 38(1-2):107--115, 1981.

\bibitem{MR656227}
F.~Hofbauer and G.~Keller.
\newblock Ergodic properties of invariant measures for piecewise monotonic
  transformations.
\newblock {\em Math. Z.}, 180(1):119--140, 1982.

\bibitem{MR0322926}
I.~A. Ibragimov and Y.~V. Linnik.
\newblock {\em Independent and stationary sequences of random variables}.
\newblock Wolters-Noordhoff Publishing, Groningen, 1971.
\newblock With a supplementary chapter by I. A. Ibragimov and V. V. Petrov,
  Translation from the Russian edited by J. F. C. Kingman.

\bibitem{MR1026617}
G.~Keller.
\newblock Lifting measures to {M}arkov extensions.
\newblock {\em Monatsh. Math.}, 108(2-3):183--200, 1989.

\bibitem{MR1679080}
G.~Keller and C.~Liverani.
\newblock Stability of the spectrum for transfer operators.
\newblock {\em Ann. Scuola Norm. Sup. Pisa Cl. Sci. (4)}, 28(1):141--152, 1999.

\bibitem{MR3784542}
A.~Korepanov.
\newblock Equidistribution for nonuniformly expanding dynamical systems, and
  application to the almost sure invariance principle.
\newblock {\em Comm. Math. Phys.}, 359(3):1123--1138, 2018.

\bibitem{MR1180522}
T.~Lindvall.
\newblock {\em Lectures on the coupling method}.
\newblock Wiley Series in Probability and Mathematical Statistics: Probability
  and Mathematical Statistics. John Wiley \& Sons, Inc., New York, 1992.
\newblock A Wiley-Interscience Publication.

\bibitem{Liv09}
C.~Liverani.
\newblock Personal notes for the beijing minicourses. {L}imit theorem for
  hyperbolic systems.
\newblock 2009.

\bibitem{MR2175992}
I.~Melbourne and M.~Nicol.
\newblock Almost sure invariance principle for nonuniformly hyperbolic systems.
\newblock {\em Comm. Math. Phys.}, 260(1):131--146, 2005.

\bibitem{MR2510014}
I.~Melbourne and M.~Nicol.
\newblock A vector-valued almost sure invariance principle for hyperbolic
  dynamical systems.
\newblock {\em Ann. Probab.}, 37(2):478--505, 2009.

\bibitem{MR2408392}
Y.~B. Pesin, S.~Senti, and K.~Zhang.
\newblock Lifting measures to inducing schemes.
\newblock {\em Ergodic Theory Dynam. Systems}, 28(2):553--574, 2008.

\bibitem{MR721733}
Y.~B. Pesin and Y.~G. Sina\u{\i}.
\newblock Gibbs measures for partially hyperbolic attractors.
\newblock {\em Ergodic Theory Dynam. Systems}, 2(3-4):417--438 (1983), 1982.

\bibitem{MR0433597}
W.~Philipp and W.~Stout.
\newblock Almost sure invariance principles for partial sums of weakly
  dependent random variables.
\newblock {\em Mem. Amer. Math. Soc. 2}, (issue 2, 161):iv+140, 1975.

\bibitem{MR728198}
M.~Rychlik.
\newblock Bounded variation and invariant measures.
\newblock {\em Studia Math.}, 76(1):69--80, 1983.

\bibitem{MR3152743}
M.~Stenlund.
\newblock A vector-valued almost sure invariance principle for {S}inai
  billiards with random scatterers.
\newblock {\em Comm. Math. Phys.}, 325(3):879--916, 2014.

\bibitem{MR1741181}
H.~Thorisson.
\newblock {\em Coupling, stationarity, and regeneration}.
\newblock Probability and its Applications (New York). Springer-Verlag, New
  York, 2000.

\bibitem{VZ16}
S.~Vaienti and H.-K. Zhang.
\newblock Optimal bounds on correlation decay rates for nonuniform hyperbolic
  systems.
\newblock {\em Submitted}, 2016.

\bibitem{MR1750438}
L.-S. Young.
\newblock Recurrence times and rates of mixing.
\newblock {\em Israel J. Math.}, 110:153--188, 1999.

\bibitem{MR2054836}
R.~Zweim\"{u}ller.
\newblock Kuzmin, coupling, cones, and exponential mixing.
\newblock {\em Forum Math.}, 16(3):447--457, 2004.

\end{thebibliography}

\bibliographystyle{abbrv}

\end{document}